\newcommand{\R}{\mathbb{R}}
\newcommand{\C}{\mathbb{C}}
\newcommand\Z{\mathbb{Z}}
\newcommand{\N}{\mathbb{N}}
\renewcommand{\H}{\mathcal{H}}
\newcommand{\T}{\mathrm{T}}
\newcommand{\hT}{\hat{\mathrm{T}}}
\newcommand{\SL}{{\rm SL}}
\newcommand{\GL}{{\rm GL}}
\newcommand{\Mod}{\mathfrak{M}}
\newcommand{\Aa}{\mathbf{A}}
\renewcommand{\S}{\mathbb{S}}
\newcommand{\Hg}{\mathcal{H}(\underline{k})}
\newcommand{\Hgi}{\mathcal{H}_1(\underline{k})}
\newcommand{\Hgie}{\mathcal{H}_{1,\underline{\eps}}^{(m)}(\underline{k})}
\newcommand{\Hge}{\mathcal{H}_{\underline{\eps}}^{(m)}(\underline{k})}
\newcommand{\Hgm}{\widetilde{\mathcal{H}}^{(m)}(\underline{k})}
\newcommand{\Hgme}{\widetilde{\mathcal{H}}_{\underline{\eps}}^{(m)}(\underline{k})}
\newcommand{\Hgms}{\widetilde{\mathcal{H}}^{(m)}(\underline{k})^*}
\newcommand{\Qg}{\mathcal{Q}(\underline{d})}
\newcommand{\Qgi}{\mathcal{Q}_1(\underline{d})}
\newcommand{\Qgie}{\mathcal{Q}_{1,\underline{\eps}}^{(m)}(\underline{d})}
\newcommand{\Qgm}{\widetilde{\mathcal{Q}}^{(m)}(\underline{d})}
\newcommand{\Qgms}{\Qgm^*}
\newcommand{\ST}{\mathbf{S}_\mathrm{T}}
\newcommand{\VT}{\mathrm{V}_\mathrm{T}}
\newcommand{\VG}{\mathrm{V}_\G}
\newcommand{\SG}{\mathbf{S}_\G}
\newcommand{\ol}{\overline}
\newcommand{\ul}{\underline}
\newcommand{\ra}{\rightarrow}
\newcommand{\Id}{\mathrm{Id}}
\newcommand{\vide}{\varnothing}
\newcommand{\eps}{\epsilon}
\newcommand{\veps}{\varepsilon}
\newcommand{\inter}{\mathrm{int}}
\newcommand{\Sig}{\Sigma}
\newcommand{\lbd}{\lambdaup}
\newcommand{\g}{\gamma}
\newcommand{\G}{\Gamma}
\newcommand{\Alp}{\mathcal{A}}
\newcommand{\U}{\mathrm{U}}
\newcommand{\V}{\mathrm{V}}
\newcommand{\Dc}{\mathcal{D}}
\newcommand{\Bc}{\mathcal{B}}
\newcommand{\Mm}{\mathcal{M}}
\newcommand{\Sc}{\mathcal S}
\newcommand{\UU}{\mathcal{U}}
\newcommand{\VV}{\mathcal{V}}
\newcommand{\Pb}{\mathbb{P}}
\newcommand{\tp}{\tauup}
\newcommand{\card}{{\rm card}}
\newcommand{\DS}{\displaystyle}
\newcommand{\vol}{{\rm vol}}
\newcommand{\Cc}{\mathcal{C}}
\newcommand{\re}{\Re}
\newcommand{\im}{\Im}
\newcommand{\pp}{\mathbf{p}}
\newcommand{\Cb}{\mathbf{C}}
\newcommand{\Sys}{\varrho}
\newcommand{\Ic}{\mathcal{I}}
\newcommand{\Ub}{\mathbf{U}}
\newtheorem{Theorem}{Theorem}[section]
\newtheorem{Corollary}[Theorem]{Corollary}
\newtheorem{Lemma}[Theorem]{Lemma}
\newtheorem{Proposition}[Theorem]{Proposition}
\newtheorem{Remark}[Theorem]{Remark}
\newtheorem{Definition}[Theorem]{Definition}
\newtheorem{Claim}[Theorem]{Claim}
\title[Surfaces with small saddle connections in rank one affine submanifolds]{Volumes of the sets of translation surfaces with small saddle connections in rank one affine submanifolds}
\author{Duc-Manh Nguyen}
\address{IMB Bordeaux,\newline
CNRS UMR 5251\newline
Universit\'e de Bordeaux \newline
351, Cours de la Lib\'eration \newline
33405 Talence \newline
FRANCE}
\email{duc-manh.nguyen@math.u-bordeaux.fr}
\date{\today}
\begin{document}
\begin{abstract}
We prove some estimates of the volumes of the sets of translation surfaces of unit area having several independent small saddle connections in a rank one affine submanifold.
\end{abstract}

\maketitle

\section{Introduction}
Translation surfaces are flat surfaces defined by holomorphic $1$-forms on compact Riemann surfaces. The space of translation surfaces of genus $g$ is parametrized by the Hodge bundle $\Omega \Mod_g$ over the moduli space $\Mod_g$ of Riemann surfaces of the same genus.
For each integral vector $\ul{k}=(k_1,\dots,k_n)$ such that $k_i >0$ and $k_1+\dots+k_n=2g-2$,
we denote by $\H(\ul{k})$ the set of translation surfaces defined by holomorphic $1$-forms which have exactly $n$ zeros with orders $k_1,\dots,k_n$.
The set $\H(\ul{k})$ is called a {\em stratum} of $\Omega\Mod_g$.
It is well-known that $\H(\ul{k})$ is an algebraic orbifold of complex dimension $2g+n-1$, and that $\H(\ul{k})$ can be locally identified with $H^1(M,\Sig,\C)$ via the period mappings, where $M$ is a base surface in $\H(\ul{k})$ and $\Sig$ is its set of singularities. For some  introductions to the subject, we refer to \cite{MasTab,Zor06, Wright_survey1}.

Let $\H_1(\ul{k})$ denote the set of surfaces in $\H(\ul{k})$ which have unit area.
There is a natural action of $\GL^+(2,\R)$ on $\H(\ul{k})$ such that the subgroup $\SL(2,\R)$ preserves $\H_1(\ul{k})$.
It turns out that the geometric and dynamical properties of a translation surface are often encoded in its $\GL^+(2,\R)$-orbit closure.
The classification of those orbit closures is a central problem of the field.

Globally, this problem has been solved by the works~\cite{EsMir12, EsMirMoh15} in which it is shown that every $\GL^+(2,\R)$-orbit closure is an immersed submanifold $\Mm$ of $\H(\ul{k})$, which is locally identified with some linear subspace $V$ of $H^1(M,\Sig,\C)$ defined by linear equations with real coefficients. Moreover, $\Mm$ carries a volume form $\vol$ proportional to the Lebesgue measure of $V$,  which induces an ergodic $\SL(2,\R)$-invariant probability measure $\vol_1$ on the set $\Mm_1$ of surfaces in $\Mm$ of unit area. Such submanifolds are called {\em invariant affine submanifolds} (or {\em affine submanifolds}) of $\H(\ul{k})$.

Since $V$ is defined by linear equations with real coefficients, we can write $V=V^\R\oplus \imath V^\R$, where $V^\R:=V\cap H^1(M,\Sig,\R)$.
Let $\pp: H^1(M,\Sig,\R) \ra H^1(M,\R)$ be the natural projection. By a result of \cite{AvEskMo12}, the restriction  of the intersection form of $H^1(M,\R)$ to $\pp(V^\R)$ is non-degenerate.
Therefore $\dim_\R \pp(V^\R)=2r$, with $r \in \{1,\dots,g\}$. The number $r$ is call the {\em rank} of $\Mm$.

In applications, for instance the computation of the Siegel-Veech constants (see~\cite{EskMasZ, MasZor}), or the computation of the sum of the Lyapunov exponents for the Teichm\"uller geodesic flow (see~\cite{EskKonZor11}), it is important to have an estimate on the volume of the set of surfaces having several non-parallel saddle connections in a given affine submanifold.
In some sense, such  estimates provide information about the ``regularity'' of the affine submanifold near its boundary.
In \cite{EskKonZor11}, Eskin-Kontsevich-Zorich define a notion of regularity for invariant affine submanifolds as follows:
given $\Mm$ as above, for any $K>0, \eps>0$, let $\Mm_1(K,\eps)$ denote the set of surfaces in $\Mm_1$ which have two non-parallel cylinders $C_1,C_2$ satisfying $\mathrm{mod}(C_i) >K$ and $\ell(C_i) < \eps$, where $\mathrm{mod}(C_i)$ and $\ell(C_i)$ are the modulus and the circumference  of $C_i$ respectively.
Then $\Mm$ is said to be {\em regular} if there exists $K>0$ such that
$$
\lim_{\eps \ra 0} \frac{\vol_1(\Mm_1(K,\eps))}{\eps^2}=0.
$$
That strata are regular is known by the work of Masur-Smillie~\cite{MasSmi91}.
In \cite{AviMatYoc13}, Avila-Matheus-Yoccoz show that  every affine submanifold of $\H(\ul{k})$ is actually regular.
In fact the result of \cite{AviMatYoc13}  is  that $\Mm$ satisfies  a slightly stronger condition.
Namely, let $\Mm_1(\eps^2)$ denote the set of surfaces in $\Mm_1$ having two non-parallel saddle connections of length smaller than $\eps$,  then for small $\eps >0$, we have
$$
\vol_1(\Mm_1(\eps^2))= o(\eps^2).
$$
In this paper, we will be focusing on rank one affine submanifolds of strata.
This class of affine submanifolds includes the close  orbits (Teichm\"uller curves) and the Prym eigenform loci discovered by McMullen~\cite{McM06,McM07}.
We will show that such affine submanifolds have the same  ``regularity'' as strata, thus improve the result of \cite{AviMatYoc13} in this case.

Specifically, let $\Mm$ be a $d$-dimensional rank one affine submanifold of a stratum $\H(\ul{k})$.
For any positive integer $\nu$ such that $\nu < d$, and any $\eps >0$, let $\Mm(\eps^\nu)$ be the set of surfaces $M \in \Mm$ such that $M$ contains $\nu$ saddle connections $e_1,\dots,e_\nu$ satisfying
\begin{itemize}
\item[(i)] $\max\{|e_1|,\dots,|e_\nu|\} < \eps \sqrt{\Aa(M)}$,  where $|e_i|$ is the euclidian length of $e_i$, and $\Aa$ is the area function,

\item[(ii)] $\{e_1,\dots,e_\nu\}$ is an independent family in $(T_M\Mm)^*$.
\end{itemize}
Let $\Mm_1(\eps^\nu)$ denote the intersection of $\Mm(\eps^\nu)$ with $\H_1(\ul{k})$. We will show

\begin{Theorem}\label{thm:estimate:eps:rk1}
There exist some constants $\eps_0>0$ and $K_0>0$ such that for any  $0< \eps < \eps_0$, we have
\begin{equation}\label{eq:main:integral}
\int_{\Mm(\eps^\nu)} e^{-\Aa}d\vol < K_0\eps^{2\nu}.
\end{equation}
Equivalently, there exists a constant $\tilde{K}_0>0$, such that $\vol_1(\Mm_1(\eps^\nu)) < \tilde{K}_0\eps^{2\nu}$, for any $0< \eps < \eps_0$.
\end{Theorem}

To obtain this result, our strategy is to specify a finite family of local charts, associated with cylinder decompositions of surfaces in $\Mm$, that cover an open subset of full measure in $\Mm$ (see Theorem~\ref{thm:good:subsp:finite:G}).  We then show that the integral of the function $e^{-\Aa}$ over the intersection of $\Mm(\eps^\nu)$ with the domain of any local chart in this family satisfies the inequality \eqref{eq:main:integral} (see Proposition~\ref{prop:estimate:kap:eps:1}, \ref{prop:estimate:kap:eps:2}).
The finiteness of the local charts associated with cylinder decompositions is derived from the fact that $\Mm$ is a quasiprojective  subvariety of $\H(\ul{k})$, which is shown in \cite{Fil:algebraic}.


In the appendices, we prove some estimates for the volumes of the set of surfaces having several small saddle connections in strata of Abelian differentials and quadratic differentials by using similar ideas  to the proof of Theorem~\ref{thm:estimate:eps:rk1}. Those estimates were actually known by the work of Masur-Smillie~\cite{MasSmi91}.

\subsection*{Acknowledgements:}
The author warmly thanks Vincent Koziarz for the helpful discussions. He thanks Alex Wright for some useful comments on an earlier version of this paper.

\section{Preliminaries on rank one affine manifolds}\label{sec:cyl:dec:charts}
\subsection{Complete periodicity}
Let us fix a surface $M=(X,\omega) \in \Mm$.  We denote by $\Sig$ the set of singularities of $M$ (zeros of $\omega$).
Let $V$ denote the  tangent space $T_M\Mm \subset H^1(M,\Sig,\C)$ of $\Mm$ at $M$.  
We will consider a cycle  $c \in H_1(M,\Sig,\Z)$ as an element of $(H^1(M,\Sig,\C))^*$, and denote by $c_V$ the restriction of $c$ to $V$.
Two cycles $c_1$ and $c_2$ of $H_1(M,\Sigma,\Z)$ are said to be $\Mm$-parallel if there is real constant $\lbd$ such that $(c_1)_V=\lbd (c_2)_V$, that is $T_M\Mm \subset \ker(c_1-\lbd c_2) \subset H^1(M,\Sigma,\C)$.

Since $\Mm$ is of rank one, if $M'\in \Mm$ is close enough to $M$, there exist a matrix  $A \in \GL^+(2,\R)$ close to $\Id$, and  a vector $v \in \ker(\pp)\cap V$ close to $0$ such that we can write $M'=A \cdot(M+v)$ (see {\em e.g.} \cite[Cor.3.2]{Lanneau:Manh:cp} or \cite[Prop.2.6]{Lan-Ng-Wr15}).

By the work of Wright~\cite{Wr13}, we know that $M$  is completely periodic in the sense of Calta, that is the direction of any cylinder is periodic (see~\cite{Cal04}). As noticed by A.Wright (see~\cite[Rem. 4.21]{Wright_survey1}), with the arguments in \cite{Wr13}, one can actually show more. Namely, we have

\begin{Theorem}[Wright]
\label{thm:cp:stronger}
Assume that there exists a family of horizontal saddle connections of $M$ which form a cycle in $H_1(M,\Z)$ with non-zero holonomy. Then $M$ is horizontally periodic.
\end{Theorem}

\begin{Definition}\label{def:stable:periodic}
The surface $M \in \Mm$ is said to be $\Mm$-stably periodic in  direction $\theta$ if it is periodic in that direction, and all the saddle connections in this direction remain parallel on any surface in a neighborhood of $M$ in $\Mm$ (see~\cite[Sec. 2.3]{Lan-Ng-Wr15}).
Equivalently, we will say that $M$ admits an $\Mm$-stable cylinder decomposition in direction $\theta$.
\end{Definition}

In the language of \cite{Wr13}, $M$ is $\Mm$-horizontally stably periodic means that ${\rm Twist}(M,\Mm)={\rm Pres}(M,\Mm)$. It also means that for any vector $v \in T_M\Mm\cap \ker(\pp)$ with $||v||$ small enough, $M+v$ is horizontally periodic with the same cylinder diagram as $M$ (compare with~\cite{Lanneau:Manh:cp}). We have
\begin{Lemma}
\label{lm:stable:full:measure}
The set of surfaces having a periodic direction which is not $\Mm$-stable has measure zero in $\Mm$.
\end{Lemma}
\begin{proof}
If $M$ has a periodic direction that is not  $\Mm$-stable then there exist two elements $c_1,c_2$ of $H_1(M,\Sigma,\Z)$, which are not $\Mm$-parallel, such that $M$ is contained in the subset of $V$ defined by  $\{ v \in V \;  | \; \im(\langle v, c_1 \rangle\ol{\langle v,c_2\rangle})=0\}$.
Thus the total measure of  the set of such surfaces (with respect to the volume form $\vol$ of $\Mm$) is zero.
\end{proof}

\subsection{Stable cylinder decompositions and local charts}
Assume now that $M$ is horizontally periodic. Let $C_1,\dots,C_k$ denote the horizontal cylinders, and  $a_1,\dots,a_m$ the horizontal saddle connections of $M$.
Note that the number of horizontal saddle connections $m$ depends only on the stratum $\H(\ul{k})$. Let $\G$ denote the corresponding cylinder diagram.


We will call a saddle connection in $C_j$ which intersects every core curve of $C_j$ once a {\em crossing saddle connection}. Any crossing saddle connection $b$ must join the left endpoint of a horizontal saddle connection $a_{i_1}$ in the bottom of $C_j$ to the left endpoint of a horizontal saddle connection $a_{i_2}$ in the top of $C_j$. We will call $(i_1,i_2)$  index of $b$.
\begin{Definition}\label{def:equiv:cross:sc}
Two crossing saddle connections  are {\em equivalent}  there is a Dehn twist of $C_j$ that sends one to the other.
Equivalently, two crossing saddle connections are  equivalent if they have the same index.
\end{Definition}
Up to a renumbering, we can assume that $\{a_1,\dots,a_{m_0}\}$ is a maximal independent family of horizontal saddle connections in $H_1(M,\Sig,\Z)$.
For every $j\in\{1,\dots,k\}$, we pick a crossing saddle connection $b_j$ in $C_j$.
It is not difficult to check that $\Bc:=\{a_1,\dots,a_{m_0},b_1,\dots,b_k\}$ is a basis of $H_1(M,\Sig,\Z)$.

Set
$$
\U_\G:= \{(r_1,\dots,r_{m_0},x_1,y_1,\dots,x_k,y_k) \in (\R_{>0})^{m_0}\times\R^{2k}\, | \,  y_j>0, j=1,\dots,k\}.
$$
We will  consider $\U_\G$ as a subset of $(\R_{>0})^{m_0}\times\Ub^{k}$, where $\Ub=\{z \in \C, \; \im(z) >0\}$, with the identification $\allowbreak (r_1,\dots,r_{m_0},x_1,y_1,\dots,x_k,y_k) \simeq (r_1,\dots,r_{m_0},x_1+\imath y_1,\dots,x_k+\imath y_k)$.


We have a map $\widetilde{\Phi}: \U_\G  \ra \H(\ul{k})$ defined as follows: given $(r_1,\dots,r_{m_0},z_1,\dots,z_k) \in \U_\G$,  for $i=1,\dots,m_0$, we assign to $a_i$ the length $r_i$,  and  compute the lengths of $a_{m_0+1},\dots,a_m$ from $(r_1,\dots,r_{m_0})$ using the relations in $H_1(M,\Sig,\Z)$. We associate to the crossing saddle connection $b_j$ the complex number $z_j$, for $j=1,\dots,k$. We can then construct the cylinder $C_j$ from a parallelogram in $\R^2$ determined by those data. Finally, the diagram $\G$ provides us with the rules to glue those cylinders together to obtain a surface in $\H(\ul{k})$.

\begin{Lemma}\label{lm:Phi:loc:inject}
The map $\widetilde{\Phi}$ is locally injective and  $\widetilde{\Phi}^{-1}(\Mm)$ consists of the intersections of $\U_\G$ with  a family  of real linear subspaces of dimension at most $2d-1$ in  $\R^{m_0}\times\R^{2k}$. A subspace in this family has dimension $2d-1$ if and only if its image under $\widetilde{\Phi}$ contains a surface which is $\Mm$-stably horizontally periodic with cylinder diagram $\G$.
\end{Lemma}
\begin{proof}
Note that in any local chart of $\H(\ul{k})$ defined by a period mapping, $\widetilde{\Phi}$ is a linear map. Since $\Bc$ is a basis of $H_1(M,\Sig,\Z)$, $\widetilde{\Phi}$ is clearly locally injective.
For $\bar{x} \in \U_\G$ we have
 $$
 d\widetilde{\Phi}(\bar{x})(\R^{m_0}\times\R^k)=H^1(M,\Sig,\R), \text{ and } d\widetilde{\Phi}(\bar{x})(\{0\}\times(\imath\R)^k) \subset H^1(M,\Sig,\imath\R).
 $$
Let $M=\widetilde{\Phi}(\bar{x})\in \Mm$, and  $V:=T_M\Mm$.
We can write $V=V^\R\oplus\imath V^\R$, where $V^\R:=V \cap H^1(M,\Sig,\R)$.
Recall that $\dim_\C \Mm=\dim_\C V=\dim_\R V^\R=d$.
Consider now a surface $M'$ in $\H(\ul{k})$ which is close enough to $M$ such that all the saddle connections $(a_1,\dots,a_m)$ persist on $M'$.
Note that  $M' \in \widetilde{\Phi}(\U_\G)$ if and only if the saddle connections $a_1,\dots,a_m$ are all  horizontal in $M'$.
Identifying $M'$  with a vector $v\in H^1(M,\Sig,\C)$, we see that $M' \in \widetilde{\Phi}(\U_\G)$ if and only if $\im \langle v, a_i\rangle=0$.
This implies that in  a neighborhood of $M$, we have
$$
 \Mm\cap\widetilde{\Phi}(\U_\G)=\{v'+\imath v''\,  | \, v' \in V^\R, \, v'' \in V^\R,  \,  \langle v'', a_i\rangle=0, \, i=1\dots,m\}.
$$
In other words, $T_M\Mm\cap \mathrm{Im}(d\widetilde{\Phi}(x))=V^\R\oplus \imath W^\R$, where
$$
W^\R:=\{v\in V^\R, \,  \langle v, a_i\rangle=0, \, i=1\dots,m\}.
$$
Clearly $\dim_\R W^\R\leq d-1$.  It follows that $\dim_\R V^\R\oplus \imath W^\R \leq 2d-1$.

 Assume now that $M$ is $\Mm$-stably horizontally periodic. We claim that in this case $\dim W^\R=d-1$.  To see this, recall that by the definition, we have
 ${\rm Twist}(M,\Mm)={\rm Pres}(M,\Mm)$.
 Since $\Mm$ is of rank one, this condition means that, if $c_1$ is a core curve of  the cylinder $C_1$,   then there exist some  real positive constants $\lbd_i, \, i=1,\dots,m$, such that $(a_i)_V=\lbd_i \cdot (c_1)_V$,   where $(c_1)_V$ and $(a_i)_V$ are the restrictions of $c_1$ and $a_i$ to $V$ respectively.
 Thus we have $W^\R=V^\R\cap \ker(c_1)$. Since $c_1$ does not vanish identically on $V^\R$, we have $\dim_\R W^\R=\dim_\R V^\R-1=d-1$.

 Conversely, if $\dim_\R W^\R=d-1$, then we must have $\ker(a_i)\cap V^\R=\ker(c_1)\cap V^\R$, which implies that $(a_i)_V$ and $(c_1)_V$ are proportional. Hence $M$ is $\Mm$-stably horizontally periodic.

 Since $d\widetilde{\Phi}(\bar{x}): \R^{m_0}\times \C^k \ra H^1(M,\Sig,\R\oplus\imath\R)$ is an  $\R$-linear injective map,
 the subspace $V_{\bar{x}}:=(d\widetilde{\Phi}(\bar{x}))^{-1}(V^\R\oplus\imath W^\R)$  of $\R^{m_0}\times \C^k$  satisfies  $\dim_\R V_{\bar{x}}=\dim_\R (V^\R\oplus \imath W^\R) \leq 2d-1$,
 and the equality occurs if and only if $M$ is $\Mm$-stably horizontally periodic by the arguments above.
\end{proof}

\begin{Definition}\label{def:alpha}
Let $\{V_\alpha, \, \alpha \in \Alp_\G\}$ denote the family of real linear subspaces of  dimension $2d-1$ in $\R^{m_0}\times \R^{2k}$ such that  $\U_{\G,\alpha}:=V_\alpha  \cap \U_\G$ is non-empty and   $\widetilde{\Phi}(\U_{\G,\alpha}) \subset \Mm$.
\end{Definition}

By Lemma~\ref{lm:Phi:loc:inject}, we know that there is an isomorphism of $\R$-linear spaces between $V_\alpha, \alpha \in \Alp_\G$, and $V^\R\oplus \imath W^\R$,  where $V^\R=T^\R_{\widetilde{\Phi}(\bar{x})}\Mm$ for some $\bar{x} \in \U_\G^\alpha$, and $W^\R$ is a linear subspace of codimension one in $V^\R$.
Using the same notation as in Lemma~\ref{lm:Phi:loc:inject}, we have the following identification $\U_{\G,\alpha}\simeq V_+^\R\times W_+^\R$, where
$$
V^\R_+:=\{v \in V^\R, \; \langle v, c_1 \rangle >0\} \text{ and } W^\R_+:=\{v \in V^\R, \; \langle v, c_1\rangle =0, \langle v, b_j \rangle >0, \, j=1,\dots,k\}.
$$

Since both $V^\R_+$ and $W^\R_+$ are convex (hence connected),   the definition of $\Mm$-stably periodic surfaces implies that the ratio $r_{i_1}/r_{i_2}$ is  constant in $\U_{\G,\alpha}$ for any $i_1,i_2\in \{1,\dots,m_0\}$. Thus we have
\begin{Lemma}\label{lm:r:proportional}
Assume that $\bar{x}=(\bar{r},z_1,\dots,z_k)$ and $\bar{x}'=(\bar{r}',z'_1,\dots,z'_k)$, where $\bar{r},\bar{r}' \in (\R_{>0})^{m_0}$, both belong to $\U_{\G,\alpha}$.  Then there exists a positive real constant $\lbd$ such that $\ol{r}'=\lbd \ol{r}$.
\end{Lemma}

\subsection{Image in the projectivized Hodge bundle}\label{sec:project:closure}
By definition, $\Mm$ is contained in the complement of the zero section of the Hodge bundle $\Omega\Mod_g$ over the moduli space $\Mod_g$.
Let $\ol{\Mod}_g$ denote the Deligne-Mumford compactification of $\Mod_g$. Then  $\Omega\Mod_g$ is the restriction to $\Mod_g$ of the bundle of stable differentials $\Omega\ol{\Mod}_g$ over $\ol{\Mod}_g$. Over a stable curve in the boundary of $\ol{\Mod}_g$, the fiber of $\Omega \ol{\Mod}_g$ consists of meromorphic differentials with  simple poles and opposite residues at the nodes. We denote by $\Pb\Omega\ol{\Mod}_g$ the projectivized bundle of $\Omega\ol{\Mod}_g$ over $\ol{\Mod}_g$. Let $\Pb\H(\ul{k})$ and $\Pb\Mm$ denote respectively the projectivizations of $\H(\ul{k})$ and $\Mm$ in $\Pb\Omega\ol{\Mod}_g$.

Let $\hat{\Phi}: \U_\G \ra \Pb\Omega\Mod_g$ denote the composition of $\widetilde{\Phi}$ and the projection from $\Omega\Mod_g$ minus the zero section to $\Pb\Omega\Mod_g$.

\begin{Lemma}\label{lm:stab:cyl:diag:dense}
For any $\alpha \in \Alp_\G$, $\hat{\Phi}(\U_{\G,\alpha})$ is  an open dense subset of $\Pb\Mm$.
\end{Lemma}
\begin{proof}
Let us consider the map $\allowbreak \S^1\times\U_\G \ra \H(\ul{k}), (\theta,\bar{x}) \mapsto e^{\imath\theta}\cdot\widetilde{\Phi}(\bar{x})$.
One can easily check that  this map sends $\S^1\times \U_{\G,\alpha}$ to an open subset of $\Mm$  which is $\GL^+(2,\R)$-invariant. By the definition of affine submanifolds, such a subset must be dense and of full measure in $\Mm$. Since $\hat{\Phi}(\U_{\G,\alpha})$ is precisely the projectivization of this subset in $\Pb\Omega\ol{\Mod}_g$, the lemma follows.
\end{proof}

\section{A finite family of local equations for $\Mm$}\label{sec:finite:subsp}
\subsection{Statement of a finiteness result}
We continue using the notation in Section~\ref{sec:cyl:dec:charts}.
For $j=1,\dots,k$, let $\ell_j$ be the circumference of $C_j$. Note that each $\ell_j$ is a linear function of $(|a_1|,\dots,|a_{m_0}|)$ ($|a_i|$ is the length of $a_i$),
whose coefficients are determined by the diagram $\G$.
Our goal now is to show the following
\begin{Theorem}\label{thm:good:subsp:finite:G}
Set
$$
\U^*_\G:=\{(r_1,\dots,r_{m_0},z_1,\dots,z_k) \in \U_\G \; | \; 0\leq \re(z_j) < \ell_j, \, j=1,\dots,k\} \text{ and } \U^*_{\G,\alpha}:=\U^*_\G\cap V_\alpha.
$$
Then the set $\Alp_\G^*=\{\alpha\in \Alp_\G, \; \U^*_{\G,\alpha} \neq \varnothing\}$ is finite.
\end{Theorem}
\begin{Remark}\label{rk:finiteness:from:MirWri}\hfill
\begin{itemize}
\item[-] If $M=(X,\omega)\in \Mm$ admits a stable cylinder decomposition in the horizontal direction, then we can always choose the crossing saddle connections $b_j$ such that $0 \leq \re(\omega(b_j)) < \ell_j$, which means that $M \in \widetilde{\Phi}(\U^*_\G)$ for some cylinder diagram $\G$. Theorem~\ref{thm:good:subsp:finite:G} means that the system of linear equations defining $\Mm$ in a neighborhood of $M$ belongs to a finite family.

\item[-] This result can be derived as a special case from the arguments of the proof of \cite[Th.5.1]{Mir-Wri:boundary}.\footnote{The author thanks A. Wright for pointing it out to him.} We will give here below an independent proof based on the fact that $\Mm$ is an algebraic subvariety of $\H(\ul{k})$ (see~\cite{Fil:algebraic}), and on the analogies with Teichm\"uller curves.
\end{itemize}
\end{Remark}
As  consequences of Theorem~\ref{thm:good:subsp:finite:G} we get the following (see also \cite[Th.1.4]{Mir-Wri:boundary}).
\begin{Corollary}\label{cor:ratio:finite}
 Let $M$ be horizontally periodic surface in $\Mm$ and $c$ a core curve of a horizontal cylinder on $M$. Let $c'$ be either a core curve of a horizontal cylinder, or a horizontal saddle connection joining a singularity of $M$ to itself. Then the ratio $|c'|/|c|$ belong to a finite set depending on $\Mm$.
\end{Corollary}
\begin{proof}
Assume that the cylinder decomposition of $M$ in the horizontal direction is $\Mm$-stable with diagram $\G$. Since we can always choose the crossing saddle connections $b_j$ such that $0\leq \re(\omega(b_j)) < \ell_j, \, j=1,\dots,k$, there exists $\alpha \in \Alp_\G^*$ such that $M$ is contained in the image of $V_\alpha\cap \U^*_\G$ by $\widetilde{\Phi}$.
Since the number of diagram for cylinder decompositions of surfaces in  $\H(\ul{k})$ is finite, and for each $\G$, the set $\Alp_\G^*$ is finite by theorem~\ref{thm:good:subsp:finite:G}, the corollary is proved for this case.

If $M$ is not $\Mm$-stably horizontally periodic, then we can deform $M$ by using some small vector in $T^{\imath\R}_M\Mm\cap\ker(\pp)$ (where $\pp: H^1(M,\Sig,\C) \ra H^1(M,\C)$ is the natural projection) to get a surface $\Mm$-stably horizontally periodic, on which both $c$ and $c'$ persist and remain horizontal. We can then conclude by the argument above.
\end{proof}


\subsection{``Cusps'' of rank one submanifolds}
For any $R>0$, set
$$
\U_\G(R)=\{(r_1,\dots,r_{m_0},x_1,y_1,\dots,x_k,y_k) \in \U_\G \, | \, y_j>R\ell_j, \, j=1,\dots,k\}.
$$
We also define for any $\alpha \in \Alp_\G$, $ \allowbreak \U_{\G,\alpha}(R):=V_\alpha\cap \U_\G(R)$. 
We will see that, for each $\alpha \in \Alp_\G$, if $R>1$, $\hat{\Phi}$ sends $\U_{\G,\alpha}(R)$ onto a ``cusp'' of $\ol{\Pb\Mm}$. Our strategy to prove Theorem~\ref{thm:good:subsp:finite:G} is first to show that the set of cusps of $\ol{\Pb\Mm}$ is finite, and then for any fixed cusp, the set of $\alpha \in \Alp_\G^*$ such that $\U_{\G,\alpha}(R)$ is mapped to this cusp is finite.

Let $M=\widetilde{\Phi}(\bar{x}) \in \H(\ul{k})$ with $\bar{x}\in \U_\G$. For $t\in \R$, let $a_t:= \left( \begin{smallmatrix} 1 & 0 \\ 0 & e^t \end{smallmatrix}\right)$ and $M_t:=a_t\cdot M$.
As $t \ra +\infty$, $M_t$ converges to  a stable differential $(X_\infty,\omega_\infty)$, where $X_\infty$ is a stable curve obtained from $M$ by pinching the core curves of all of its horizontal cylinders, and $\omega_\infty$ has a simple pole at every node of $X_\infty$.
Note that the topology of $X_\infty$ is completely determined by the cylinder diagram $\G$.
The stable differential  $(X_\infty,\omega_\infty)$ belongs a stratum $\Sc_\G$ of $\Omega\ol{\Mod}_g$  which is contained in the closure of $\H(\ul{k})$.
Let $[(X_\infty,\omega_\infty)]$ and $\Pb\Sc_\G$ denote the projectivizations of $(X_\infty, \omega_\infty)$ and $\Sc_\G$ in $\Pb\Omega\ol{\Mod}_g$.

\begin{Lemma}\label{lm:cusp:unique:intersection}
If $R>1$, then $[(X_\infty,\omega_\infty)]$ is the unique intersection of the closure of $\hat{\Phi}(\U_{\G,\alpha}(R))$ with $\Pb\Sc_\G$, that is
$\ol{\hat{\Phi}(\U_{\G,\alpha}(R))} \cap \Pb\Sc_\G=\{[(X_\infty,\omega_\infty)]\}$.
\end{Lemma}
\begin{proof}
Recall that $M$ is defined by an Abelian differential $(X,\omega)$.
We can suppose that  $\allowbreak \min\{\ell_1,\dots,\ell_k\}=1$.
Let $c_j$ be a core curve of the cylinder $C_j$ on $M$, $j=1,\dots,k$.
We have a map $f_\infty: X \ra X_\infty$ that satisfies
\begin{itemize}
  \item each zeros of $\omega$ is mapped to a zero of $\omega_\infty$ of the same order,
  \item $c_j$ is mapped to a node $\mathbf{n}_j$ of $X_\infty$,
  \item $f$ maps $X\setminus (\cup_{j=1}^k c_j)$ homeomorphically to $X_\infty\setminus (\cup_{j=1}^{k}\mathbf{n}_j)$.
 \end{itemize}
The flat surface $M_\infty$ defined by $(X_\infty,\omega_\infty)$ can be constructed as follows:  for $j=1,\dots,k$, cut the cylinder $C_j$ along   $c_j$,  then on the  resulting surface, glue to each boundary component arising from $c_j$  a half-infinite horizontal cylinder with  the same circumference as $C_j$.

 Let $\mathbf{a}$ denote the union of all the horizontal saddle connections of $M$.  By construction those saddle connections persist on $M_\infty$.
 For any  positive real number  $h >0$, let $U_h$ denote the subset of $M_\infty$ consisting of points whose distance to $\mathbf{a}$ is at most $h$. Note that the complement of $U_h$ is the disjoint union of $2k$ half-infinite cylinders.
 Clearly, we have $\cup_{h>0}U_h=X_\infty\setminus\{\cup_{j=1}^k\mathbf{n}_j\}$.

 Let us recall a notion of convergence compatible with the topology of $\Omega\ol{\Mod}_g$ in terms of flat metric (see {\em e.g.} \cite[Sec. 5.3]{Bainbridge-GT}, or
 \cite[Def. 2.2]{Mir-Wri:boundary}).
 A sequence $\{(X_i,\omega_i)\}_{i\in \N}$ of Abelian differentials in $\H(\ul{k})$ converges to $(X_\infty,\omega_\infty)$ if there exist a sequence of maps $f_i: X_i \ra X_\infty$ collapsing some curves of $X_i$ to the nodes of $X_\infty$ such that,  for $i$ large enough, the restriction of $f_i$ to $f_i^{-1}(U_h)$ is a diffeomorphism onto its image, and  the sequences of metrics $({f_i}_{|U_h}^{-1})^*\omega_i$ on $U_h$ converges to $\omega_\infty$ in the compact open topology.

Let $(X_i,\omega_i)=\widetilde{\Phi}(\bar{x}_i)$, with $\bar{x}_i \in \U_{\G,\alpha}(R)$, be a sequence such that $[(X_i,\omega_i)]$ converges to a point $[(X'_\infty,\omega'_\infty)]$ in $\Pb\Sc_\G$.
We can normalize $(X_i,\omega_i)$ such that the smallest circumference of the horizontal cylinders on $X_i$  equals to $1$, and assume that $(X_i,\omega_i)$ converges to $(X'_\infty,\omega'_\infty)$ in $\Omega\ol{\Mod}_g$. We will write
$$
\bar{x}_i=(r^{(i)}_1,\dots,r^{(i)}_{m_0},x_1^{(i)},y_1^{(i)},\dots, x_k^{(i)},y_k^{(i)}).
$$
Since the ratios $r_{i_1}/r_{i_2}$ are constant on $\U_{\G,\alpha}$, the condition that the smallest circumference of $C_1,\dots,C_k$  equals to $1$ implies that $(r^{(i)}_1,\dots,r^{(i)}_{m_0})$ does not depend on $i$.

Given $h>0$, we choose $R'$ such that $R'>\max\{2h,R\}$. Since $X'_\infty$ has $k$ nodes, for $i$ large enough  $(X_i,\omega_i)$ must have $k$ disjoint cylinders with moduli greater than $R'$. We claim that those cylinders are $C_1,\dots,C_k$. Assume that there is a cylinder $C$ not in this family with modulus $> R'$, then $C$ must cross one of them, say $C_1$. In this case the circumference $\ell(C)$ of $C$ is at least $y_1^{(i)}$, and the height $h(C)$ of $C$ cannot exceed $\ell_1$. Thus we would have
$$
R' < \frac{h(C)}{\ell(C)} \leq \frac{\ell_1}{y_1^{(i)}} < \frac{1}{R},
$$
which is impossible since we have $R'>R>1$.
It follows that we have $y_j^{(i)}/\ell_j > R', \, j=1,\dots,k$.

Let $X_{i,h}$ be the subset of $X_i$ consisting of points whose distance to the the union of the horizontal saddle connections is at most $h$. Since $y_j > 2h\ell_j, j=1,\dots,k$, $X_{i,h}$ is a proper subset of $X_i$ which is isometric to $U_h \subset X_\infty$.
We can then  define a map $f_i: X_i \ra X_\infty$ which collapses some  curves in $X_i\setminus X_{i,h}$ to the nodes of $X_\infty$, such that the restriction ${f_i}_{|X_{i,h}}: X_{i,h} \ra U_h$ is an isometry. It follows that we can extract from $\{(X_i,\omega_i)\}$ a subsequence converging to $(X_\infty,\omega_\infty)$. Thus we must have $(X'_\infty,\omega'_\infty)=(X_\infty,\omega_\infty)$, and the lemma follows.
\end{proof}

Let $\ol{\Pb\Mm}$ denote the closure (in the usual topology)  of $\Pb\Mm$ in $\Pb\Omega\ol{\Mod}_g$.
By a result of S.~Filip~\cite{Fil:algebraic},  $\ol{\Pb\Mm}$ is a subvariety of $\Pb\Omega\ol{\Mod}_g$ which contains $\Pb\Mm$ as a Zariski open subset.
The germ of complex analytic sets defined by $\ol{\Pb\Mm}$ at $[(X_\infty,\omega_\infty)]$ has finitely many irreducible components.
From general results on complex analytic sets,  each irreducible component $\Cc$ of this germ satisfies the following property: there is a basis of neighborhoods $\{\VV_i, i \in I\}$ of $[(X_\infty,\omega_\infty)]$ in $\Pb\Omega\ol{\Mod}_g$ such that for any $i \in I$,  $(\VV_i \cap\Cc)\cap\Pb\Mm$ is path connected (see~\cite[Prop. A.5, p.117]{Mumford76}).

\begin{Lemma}\label{lm:Ualpha:cusp}
Assume that $\bar{x}\in V_\alpha$, with some $\alpha \in \Alp_\G$. Then  there is an irreducible component $\Cc_\alpha$ of the germ of  $\ol{\Pb\Mm}$ at $[(X_\infty,\omega_\infty)]$ such that,  for any $R>1$, there exists a neighborhood $\VV$ of $[(X_\infty,\omega_\infty)]$ in $\Pb\Omega\ol{\Mod}_g$ such that $\hat{\Phi}(\U_{\G,\alpha}(R))$ contains the set $(\VV\cap\Cc_\alpha)\cap\Pb\Mm$. \end{Lemma}
\begin{Remark}\label{rk:cusp}
When $\Mm$ is a closed $\GL^+(2,\R)$-orbit, $\Cc_\alpha$ is a cusp of the corresponding Teichm\"uller curve.
\end{Remark}
\begin{proof}

Let $M_\infty=(X_\infty,\omega_\infty)$, and $\hat{M}_\infty=[(X_\infty,\omega_\infty)]$ be the projectivization of $M$ in $\Pb\Omega\ol{\Mod}_g$.
Let $\UU$ be a neighborhood of $\hat{M}_\infty$ in $\Pb\Omega\ol{\Mod}_g$ such that $\UU$ intersects the set $\ol{\Pb\Mm}_{\rm reg}$ of regular points of $\ol{\Pb\Mm}$ in a finite union of disjoint complex manifolds (of dimension $d-1$), each of which corresponds to an irreducible component of the germ of $\ol{\Pb\Mm}$ at $\hat{M}_\infty$.
The arguments of Lemma~\ref{lm:cusp:unique:intersection} actually show that if $R>0$ is large enough, then $\hat{\Phi}(\U_{\G,\alpha}(R))$ is contained in $\UU$.
Since $\U_{\G,\alpha}(R)$ is an open connected subset of $\U_{\G,\alpha}$, its image under $\hat{\Phi}$ is an open connected subset of $\Pb\Mm$ (Lemma~\ref{lm:stab:cyl:diag:dense}).
It follows that $\hat{\Phi}(\U_{\G,\alpha}(R))$ is contained in a unique irreducible germ $\Cc_\alpha$ of the analytic sets defined by $\ol{\Pb\Mm}$.

Let $\VV\subset \UU$ be a neighborhood of $\hat{M}_\infty$ in $\Pb\Omega\ol{\Mod}_g$ such that every translation surface that projects to a point in $\VV\cap\Pb\H(\ul{k})$ has $k$ disjoint cylinders (not necessarily parallel) each of which has modulus greater than $2R$ (such a neighborhood exists because $M_\infty$ has $k$ infinite cylinders).
We will show that $(\VV\cap\Cc_\alpha)\cap \Pb\Mm$ is contained in $\hat{\Phi}(\U_{\G,\alpha}(R))$.

Let $\hat{M}$ be projectivization  of $M$ in $\Pb\Mm$. Without loss of generality, we can assume that $\hat{M} \in (\VV\cap\Cc_\alpha)\cap\Pb\Mm$.
Since the complement of $(\VV\cap\Cc_\alpha)\cap\Pb\Mm$  in $\VV\cap\Cc_\alpha$ is a proper analytic subset of  $\VV\cap\Cc_\alpha$, the set $(\VV\cap\Cc_\alpha)\cap\Pb\Mm$ is connected.
Let $N$ be a surface in $\Mm$ whose projectivization $\hat{N}$ belongs to $(\VV\cap\Cc_\alpha)\cap\Pb\Mm$.
Then there is a path $\gamma: [0,1] \ra (\VV\cap\Cc_\alpha)\cap\Pb\Mm$ from $\hat{M}$ to $\hat{N}$.
By a slight abuse of notation, let us denote also by $\gamma$ a lift of this path in $\Mm$ joining  $M$ and $N$.

We claim that there exists a path $\widetilde{\gamma}: [0,1] \ra \S^1\times \U_{\G,\alpha}(R)$, such that $\widetilde{\Phi}\circ\widetilde{\gamma}=\gamma$ (here by a slight abuse of notation, we denote by $\widetilde{\Phi}$ the map $\S^1\times\U^\alpha_\G  \ra \Mm, \;  (\theta,x) \mapsto e^{\imath\theta}\cdot\widetilde{\Phi}(x)$).
Since the map $\widetilde{\Phi}: \S^1\times\U_\G^\alpha(R) \ra \Mm$ is locally homeomorphic, there is a maximal $s_0 \in (0,1]$ such that one can define a path $\widetilde{\gamma}:[0,s_0)\ra \S^1\times \U_{\G,\alpha}(R)$ satisfying $\widetilde{\Phi}\circ\widetilde{\gamma}(s)=\gamma(s)$, for all $s\in  [0,s_0)$.
We will write $\widetilde{\gamma}(s)=(\theta(s),\bar{x}(s))$, with $\bar{x}(s)\in \U_{\G,\alpha}(R)$.

Let $N_s:=\gamma(s) \in \Mm$, for $s\in [0,1]$ (recall that $\gamma(s)$ is defined for all $s \in [0;1]$). By the definition of affine manifolds,  in a neighborhood of $N_{s_0}$ in $\H(\ul{k})$, $\Mm$ is identified with a linear subspace $V$ of $H^1(M,\Sig,\C)$ (here we used a homeomorphism between $M$ and $N_{s_0}$ respecting the singularities). Thus, for $s< s_0$ close enough to $s_0$, we can suppose that $N_s \in V$. Note that $V^\R$ and $V_\alpha^\R$ are related by a basis change of $H_1(M,\Sig,\Z)$. Hence, as $s \ra s_0^-$, $\widetilde{\gamma}(s)$ converges to a point $(\theta(s_0),\bar{x}(s_0)) \in \S^1\times V_\alpha$. We need to show that $\bar{x}(s_0)\in \U_{\G,\alpha}(R)$. Since $\bar{x}(s)\in V_\alpha$, it follows that $\bar{x}(s_0)\in V_\alpha$. It remains to check that $\bar{x}(s_0)\in \U_\G(R)$.

For $s \in [0,s_0)$,  $N_{s}$ has $k$  cylinders corresponding  to horizontal cylinders $C_1,\dots,C_k$ of $M=N_0$. Since the ratio of the lengths of  any pair of saddle connections in the family $\{a_1,\dots,a_m\}$ is constant in $V_\alpha$, we can assume that the lengths of those saddle connections in $N_s$ are constant functions of $s$. Thus, we can write
$$
\bar{x}(s)=(r_1,\dots,r_{m_0},x_1(s),y_1(s),\dots,x_k(s),y_k(s)).
$$
It follows that  the circumferences of the cylinders $C_1,\dots,C_k$ are also constant functions of $s$.
Since $\bar{x}(s) \in \U_\G(R)$, we have $y_j(s)>R\ell_j$, for all $j=1,\dots,k$. By the assumption that $\hat{N}_s\in \VV\cap\Pb\Mm$, there are $k$ cylinders in $N_s$ with moduli at least $2R$. We claim that those cylinders must be $\{C_1,\dots,C_k\}$. Indeed, let $C$ be a cylinder in $N_s$ with modulus $\geq 2R$. If $C$ is not one of $C_1,\dots,C_k$ then $C$ must cross one of them, say $C_1$. Therefore the circumference $\ell(C)$ of $C$ satisfies $\ell(C) \geq y_1(s)$.
On the other hand, the height $h(C)$ of $C$ cannot exceed $\ell_1$. Thus we have
$$
2R\leq  \frac{h(C)}{\ell(C)} \leq  \frac{\ell_1}{y_1(s)} < \frac{1}{R},
$$
which is a contradiction since we have chosen $R>1$. It follows in particular that the modulus of the cylinder $C_j$ on $N_s$ is at least $2R$, which means that $y_j(s) \geq 2R\ell_j, \, j=1,\dots,k$.  As a consequence, since $y_j(s)$ converges to $y_j(s_0)$ as $s \ra s_0^-$, we draw that $y_j(s_0) \geq 2R\ell_j$. Therefore $\bar{x}(s_0) \in \U_{\G,\alpha}(R)$,
and the path $\widetilde{\gamma}$ can be extended to $[0,s_0]$. By compactness the lemma follows.
\end{proof}

\medskip

Let us now define $\Cb_\G$ to be the set of points $\hat{M}_\infty \in \Pb\Sc_\G$ for which there exists $\alpha \in \Alp_\G$ such that $M_\infty$ is contained in the closure of $\hat{\Phi}(\U_{\G,\alpha}(R))$, for some $R>1$.

\begin{Lemma}\label{lm:cusp:finite}
 The set $\Cb_\G$ is finite.
\end{Lemma}
\begin{proof}
Let $\ol{\Pb\Sc}_\G$ be the closure of the stratum $\Pb\Sc_\G$ in $\Pb\Omega\ol{\Mod}_g$. Note that $\ol{\Pb\Sc}_\G$ is a subvariety of $\ol{\Pb\H(\ul{k})}$.
Assume that $\Cb_\G$ is an infinite subset of $\Pb\Sc_\G$. Then by compactness, there exists a sequence $\{\hat{M}^{(i)}_\infty\}_{i \in \N} \subset \Cb_\G$ that converges to a point $\hat{M}_\infty \in \ol{\Pb\Sc}_\G$.

Since $\hat{M}^{(i)}_\infty$ has $k$ infinite cylinders,  the flat surface $\hat{M}_\infty$ must have $k'\geq k$ infinite cylinders.
We first claim that $k'=k$. To see this, let $\VV$ be a neighborhood of $\hat{M}_\infty$ in $\ol{\Pb\H(\ul{k})}$ such that every flat surface represented by a point in $\VV$ has $k'$ disjoint cylinders with moduli greater than $2R$, where $R>1$.
Consider a point $\hat{M}^{(i)}_\infty \in \Cb_\G\cap \VV$. By assumption, $\hat{M}^{(i)}_\infty$ is the limit of $\hat{M}_t$  as $t\ra +\infty$,
where $\hat{M}_t$ is the projectivization of $a_t\cdot \widetilde{\Phi}(\bar{x})$ with $\bar{x} \in \U_{\G,\alpha}$, for some $\alpha \in \Alp_\G$.
Thus $\VV$ contains some flat surfaces that admit a cylinder decomposition with diagram $\G$ such that the modulus of any cylinder in this family is at least $R>1$.
Any other cylinder on those surfaces must cross some cylinders in this family. By the argument of Lemma~\ref{lm:Ualpha:cusp}, the modulus of such a cylinder is smaller than $1/R < 2R$.
Hence, there cannot exist more than $k$ disjoint cylinders of moduli greater than $2R$ on those surfaces, from which we deduce that $k'=k$.
It follows in particular that if $\bar{x}\in \U_\G(R)$ and the projectivization of $\widetilde{\Phi}(\bar{x})$ belongs to $\VV$, then $\bar{x}\in \U_\G(2R)$.

Let $\Cc_i$ be the irreducible component of $\VV\cap \ol{\Pb\Mm}$ that contains the germ of $\ol{\Pb\Mm}$ at $\hat{M}^{(i)}_{\infty}$ defined in Lemma~\ref{lm:Ualpha:cusp}.
We claim that $\Cc_i\cap\Pb\Sc_\G=\{\hat{M}^{(i)}_{\infty}\}$.  To see this, we first notice that  $\Cc_i\cap\Pb\Mm$ is an open dense connected subset of $\Cc_i$, since its complement is contained in an analytic proper subset of $\ol{\Pb\Mm}$.
By definition, $\Cc_i\cap\Pb\Mm$ contains the projectivization $\hat{M}$ of a surface $M=\widetilde{\Phi}(\bar{x})$, with $\bar{x}\in \U_{\G,\alpha}(2R)$. The arguments of Lemma~\ref{lm:Ualpha:cusp} show that $\Cc_i\cap\Pb\Mm$ is contained in $\hat{\Phi}(\U_{\G,\alpha}(R))$. But by Lemma~\ref{lm:cusp:unique:intersection}, we know that $\hat{M}^{(i)}_{\infty}$ is the unique intersection of $\Pb\Sc_\G$ and the closure of $\hat{\Phi}(\U_{\G,\alpha}(R))$.
Since any point in the intersection $\Cc_i\cap\Pb\Sc_\G$ must be a limit point of $\hat{\Phi}(\U_{\G,\alpha}(R))$, we conclude that $\Cc_i\cap\Pb\Sc_\G=\{\hat{M}^{(i)}_{\infty}\}$.

The above claim implies that $\Cc_i\neq \Cc_j$ if $i\neq j$, which means that $\VV\cap\ol{\Pb\Mm}$ contains infinitely irreducible components. But this is a contradiction, because $\ol{\Pb\Mm}$ is a subvariety of $\ol{\Pb\H(\ul{k})}$, and the lemma follows.
\end{proof}

We now define an equivalence relation $\sim$ on the set $\Alp_\G$ as follows
\begin{Definition}\label{def:equiv:alpha}
 Two elements $\alpha,\alpha' \in \Alp_\G$ are said to  be {\em equivalent} if
\begin{itemize}
\item[-] there exist $\bar{x}\in \U_{\G,\alpha}$, $\bar{x}'\in \U_{\G,\alpha'}$ such that $a_t\cdot \widetilde{\Phi}(\bar{x})$ and $a_t\cdot \widetilde{\Phi}(\bar{x}')$ converge to the same point $M_\infty$ in $\Sc_\G$ as  $t \ra +\infty$, and
\item[-] for $R>0$ large enough $\hat{\Phi}(\U_{\G,\alpha}(R))$ and $\hat{\Phi}(\U_{\G,\alpha'}(R))$ are contained in the same irreducible component of the germ of $\ol{\Pb\Mm}$ at  $\hat{M}_\infty$.
\end{itemize}
\end{Definition}

As a direct consequence of Lemmas~\ref{lm:cusp:finite}, we get
\begin{Corollary}\label{cor:finite:equiv:cls}
The relation $\sim$ on $\Alp_\G$ has finitely many equivalence classes.
\end{Corollary}
\begin{proof}
By definition, each equivalence class of this relation corresponds to an irreducible component  of the germ of analytic sets  of $\ol{\Pb\Mm}$ at a point  $M_\infty \in \Cb_\G$. Since $\Cb_\G$ is finite, and the germ of $\ol{\Pb\Mm}$ at each point of $\Cb_\G$ can only have finitely many irreducible components, the corollary follows.
\end{proof}

\begin{Lemma}\label{lm:Phi:finite:preim}
Let
$$
\bar{x}=(r_1,\dots,r_{m_0},x_1,y_1,\dots,x_k,y_k) \text{ and } \bar{x}'=(r'_1,\dots,r'_{m_0},x'_1,y'_1,\dots,x'_k,y'_k)
$$
be two points in $\U_\G(R)$, with $R>1$.
Assume that there exist $\theta$ such that  $\widetilde{\Phi}(\bar{x})=e^{\imath\theta}\cdot \widetilde{\Phi}(\bar{x}')$.  Then we must have  $\theta\in \Z\pi$, and  $\widetilde{\Phi}(\bar{x}')$ can be obtained from $\widetilde{\Phi}(\bar{x})$ by permuting the horizontal cylinders and by Dehn twists in the cylinders. In particular, there are two permutations $\sigma_1 \in \mathfrak{S}_{m_0}$ and $\sigma_2\in \mathfrak{S}_k$ such that $r'_i=r_{\sigma_1(i)}$ and $y'_j=y_{\sigma_2(j)}$.
\end{Lemma}
\begin{proof}
Let $M= \widetilde{\Phi}(\bar{x}), M'=\widetilde{\Phi}(\bar{x}')$.
 Assume that we have  $M=M'$, which  means that there is an isometry between $M$ and $M'$ respecting the holonomy of any saddle connection. This implies that a horizontal cylinder (resp. saddle connection) is mapped to a horizontal cylinder (resp. saddle connection). Therefore, the surface $M'$  can be obtained from  $M$ by a permutation of the horizontal cylinders and saddle connections, and by some Dehn twists in the cylinders. The last assertion follows immediately from this observation

Assume now that we have $M=e^{\imath\theta}\cdot M'$. This assumption means that $M$ admits a cylinder decomposition in the direction $\theta$ with the corresponding parameters given by $\bar{x}'$. Since $\bar{x}' \in \U_\G(R)$, all the moduli of the cylinders in the direction $\theta$ is greater than $R$. If $\theta \not\in \Z\pi$, the core curves of one of those cylinders must cross $C_1$, hence we can use the same argument as in Lemma~\ref{lm:Ualpha:cusp} to get a contradiction. Therefore, we must have $\theta \in \Z\pi$, and the lemma follows from the
argument above.
\end{proof}

\begin{Lemma}\label{lm:coord:change:equiv:subsp}
Assume that $\alpha \sim {\alpha'}$ in $\Alp_\G$. Then there exist two permutations $\sigma_1\in \mathfrak{S}_{m_0}$ and $\sigma_2\in \mathfrak{S}_k$ such that if $\allowbreak \bar{x}=(r_1,\dots,r_{m_0},x_1,y_1,\dots,x_k,y_k)$ is a point  in $\U_{\G,\alpha}$, then $\U_{\G,\alpha'}$ contains a point $\allowbreak \bar{x}'=(r'_1,\dots,r'_{m_0}, x'_1,y'_1,\dots,x'_k,y'_k)$ where $r'_i=r_{\sigma_1(i)}$ and $y'_j=y_{\sigma_2(j)}$, for $i=1,\dots,m_0,$ and $ j=1,\dots,k$.
\end{Lemma}
\begin{proof}
Let $M=\widetilde{\Phi}(\bar{x})$ and $M_\infty \in \Sc_\G$ be the limit of $a_t\cdot M$ as $t \ra +\infty$.  Let $\Cc$ be  the  irreducible component of the germ of  $\ol{\Pb\Mm}$ at $\hat{M}_\infty$ defined  in Lemma~\ref{lm:Ualpha:cusp}.
We can assume that $\hat{\Phi}(\U_{\G,\alpha'}(R))$ contains a subset  of the form $(\VV\cap\Cc)\cap \Pb\Mm$, where $\VV$ is a neighborhood of $\hat{M}_\infty$ in $\ol{\Pb\H(\ul{k})}$.

By assumption, when $t$ is large enough, the projectivization $\hat{M}_t$ of $M_t=a_t\cdot M$ belongs to $\VV\cap\Cc$. Thus, there exists $\allowbreak \bar{x}' \in \U_{\G,\alpha'}(R)$, such that $\hat{M}_t=\hat{M}'$, where $\hat{M}'$ is the projectivization of $M':=\widetilde{\Phi}(\bar{x}')$.
We can normalize $M'$ such that $\Aa(M_t)=\Aa(M')$, which means that $M'=e^{\imath\theta}\cdot M_t$, for some $\theta\in \R$.

If we write $\allowbreak \bar{x}'=(r'_1,\dots,r'_{m_0},x'_1,y'_1,\dots,x'_k,y'_k)$, then by Lemma~\ref{lm:Phi:finite:preim}, we know that there exist two permutations $\sigma_1\in \mathfrak{S}_{m_0}$ and $\sigma_2\in \mathfrak{S}_k$ such that $r'_i=r_{\sigma_1(i)}$, $y'_j=e^ty_{\sigma_2(j)}$. Applying $a_{-t}$ to $M'$ we get the desired conclusion.
\end{proof}

\subsection{Proof of Theorem~\ref{thm:good:subsp:finite:G}}\label{sec:prf:thm:finite:subsp}
\begin{proof}
We will show that for any  $\alpha_0 \in \Alp_\G$, there are only finitely many $\alpha\in \Alp_\G$ equivalent to $\alpha_0$ such that $\U^*_{\G,\alpha}\neq \varnothing$. Since we have shown that the number of equivalence classes in $\Alp_\G$ is finite, this is enough to prove the theorem.

Recall that for every $\alpha\in \Alp_\G$, we can write $V_\alpha=V^\R_\alpha\oplus \imath W^\R_\alpha$, where $V_\alpha^\R$  is isomorphic to $V^\R=T^\R_{\widetilde{\Phi}(\bar{x})}\Mm$, $W_\alpha^\R$ is isomorphic to   a hyperplane $W^\R$ in $V^\R$ via the linear map $d\widetilde{\Phi}(\bar{x})$ with $\bar{x}$ being any point in $\U_{\G,\alpha}$. Let
$$
V^\R_+:=\{v \in V^\R, \; \langle v, c_1 \rangle >0\} \text{ and } W^\R_+:=\{v \in V^\R, \; \langle v, c_1\rangle =0, \langle v, b_j \rangle >0, \, j=1,\dots,k\}.
$$
Note that $V^\R_+$ and $W^\R_+$ are convex open cones in $V^\R$ and $W^\R$ respectively.
We define $V^\R_{\alpha+}$, and $W^\R_{\alpha+}$ to be the pre-images of $V^\R_+$ and $W^\R_+$ by $d\widetilde{\Phi}(\bar{x})$ respectively.
We then have the following identification
$$
\U_{\G,\alpha} \simeq V^\R_{\alpha+}\times W^\R_{\alpha+}.
$$
Fix a point $\bar{x}^0=(r^0_1,\dots,r^0_{m_0},x^0_1,y^0_1,\dots,x^0_k,y^0_k)$ in $\U_{\G,\alpha_0}$. Let $\alpha\in \Alp_\G$ be an element equivalent to $\alpha_0$. By Lemma~\ref{lm:coord:change:equiv:subsp}, we know that $\U_{\G,\alpha}$ contains a point $\allowbreak \bar{x}=(r_1,\dots,r_{m_0},x_1,y_1,\dots,x_k,y_k)$ such  that $(r_1,\dots,r_{m_0})=(r^0_1,\dots,r^0_{m_0})$ and $(y_1,\dots,y_k)=(y^0_1,\dots,y^0_k)$ up to some permutations of the indices.
By assumption, $\U_{\G,\alpha}$ contains a point $\allowbreak \bar{x}'=(r'_1,\dots,r'_{m_0},x'_1,y'_1,\dots,x'_k,y'_k) \in \U^*_{\G,\alpha}$, which means that
$$
0\leq x'_j < \ell'_j, \, j=1,\dots,k \qquad (*)
$$
where $\ell'_j$ is the circumference of the cylinder $C_j$, which is a linear function of $(r'_1,\dots,r'_{m_0})$.
By  Lemma~\ref{lm:r:proportional}, we have $\allowbreak (r_1,\dots,r_{m_0})= \lbd (r'_1,\dots,r'_{m_0})$, for some $\lbd \in \R_{>0}$.
By replacing $\allowbreak (r'_1,\dots,r'_{m_0},x'_1,\dots,x'_k)$ by $\allowbreak \lbd(r'_1,\dots,r'_{m_0},x'_1,\dots,x'_k)$, which also satisfies $(*)$,
we can assume that $(r'_1,\dots,r'_{m_0})=(r^0_1,\dots,r^0_{m_0})$ up to some permutation of indices.

Since $\allowbreak (r'_1,\dots,r'_{m_0},x'_1,\dots,x'_k)\in V^\R_{\alpha+}$ and $\allowbreak (y_1,\dots,y_k)\in W^\R_{\alpha+}$, from the identification $\U_{\G,\alpha}\simeq V^\R_{\alpha+}\times W^\R_{\alpha+}$,  we see that $\U_{\G,\alpha}$ contains the point $\allowbreak \bar{x}=(r'_1,\dots,r'_{m_0}, x'_1,y_1,\dots,x'_k,y_k)$. As $\allowbreak (r'_1,\dots,r'_{m_0},x'_1,\dots,x'_k)$  satisfies $(*)$, we get $\bar{x} \in \U^*_{\G,\alpha}$.

Now, assume  that there are infinitely many $\alpha_q \in \Alp_\G, \, q\in \N$, that are equivalent to $\alpha_0$ such that $\U^*_{\G,\alpha_q}\neq \varnothing$. Up to some permutation of indices, we get a sequence of points $\{\bar{x}_q\}$, where $\bar{x_q} \in  \U^*_{\G,\alpha_q}$ and $\allowbreak \bar{x}_q=(r^0_1,\dots, r^0_{m_0},   x^{(q)}_1,y^0_1, \dots ,x^{(q)}_k,y^0_k)$.
Since $(r^0_1,\dots,r^0_{m_0})$ and $(y^0_1,\dots,y^0_k)$ are fixed, the condition $(*)$ means that $\{\bar{x}_q\}$ is contained in a compact subset of $\U_\G$. Let $\bar{x}_\infty$ be an accumulation point of the sequence $\{\bar{x}_q\}$.
Using the fact that $\widetilde{\Phi}$ is a locally injective map, we draw that  $\Mm$ intersects  any neighborhood of $\widetilde{\Phi}(\bar{x}_\infty)$ in infinitely many irreducible components, which is impossible since $\Mm$ is a submanifold of $\H(\ul{k})$. The theorem is then proved.
\end{proof}


\section{Integration on local charts by cylinder decomposition}\label{sec:int:cyl:charts}
\subsection{Small saddle connections in a stable cylinder decomposition}
Before getting to the proof of Theorem~\ref{thm:estimate:eps:rk1}, we collect some facts about surfaces in a rank one affine submanifold with several small saddle connections.
In what follows $M$ is a $\Mm$-stably horizontally  periodic surface with cylinder diagram $\G$. We will use the same notation as in Section~\ref{sec:cyl:dec:charts}. Recall that the crossing saddle connections $b_j$ satisfy the condition $0\leq \re(\omega(b_j)) < \ell_j, \, j=1,\dots,k$.

By a {\em  cycle of horizontal cylinders} of $M$ we will mean a family $\Cc=\{C_1,\dots,C_j\}$ of horizontal cylinders such that the top of $C_i$ and the bottom of $C_{i+1}$ have a common  horizontal saddle connection, for $i\in \{1,\dots,j\}$ with the convention $C_{j+1}=C_1$.
Let  $h_i$ be the height of $C_i$. We define the total height of $\Cc$ to be $h_1+\dots+h_j$, and denote it by $h(\Cc)$.

For any direction $\theta \in \S^1$, let us denote by $\Sys(M,\theta)$ the minimal length of the  saddle connections  in direction  $\theta$ of $M$.
By convention, $\Sys(M,\theta)=+\infty$ if there is no saddle connection in direction $\theta$.

In the following lemma, we  show that if the area is fixed and the circumferences of the horizontal is bounded above then a cycle of cylinders cannot collapse simultaneously.

\begin{Lemma}\label{lm:cyl:non:collapse}
 For any positive real constant $\kappa >0$, there is a constant $\tilde{\eps}_0=\tilde{\eps}_0(\kappa)$ such that if $\Sys(M,0) \leq \kappa\sqrt{\Aa(M)}$,
 then $h(\Cc) >\tilde{\eps}_0\sqrt{\Aa(M)}$ for any cycle of horizontal cylinders on $M$.
\end{Lemma}
\begin{proof}
Assume that there is a sequence of $\Mm$-stably horizontally periodic surfaces $\{M_q\}_{q\in \N}\subset \Mm$, with $\Aa(M_q)=1$, $\Sys(M_q,0) \leq \kappa$, and $M_q$ contains a cycle of horizontal cylinder $\Cc_q$ such that $\lim_{q \ra +\infty} h(\Cc_q)=0$. Actually, we can assume that $\Sys(M_q,0)=\kappa$, for all $q \in \N$, since if we have $\Sys(M_q,0) =\varrho < \kappa$, we can replace  $M_q$ by $\left(\begin{smallmatrix}  e^t & 0 \\ 0 & e^{-t} \end{smallmatrix}\right)\cdot M_q$, with $t=\log(\kappa/\varrho)$.

We can assume that the diagrams of the cylinder decomposition in the horizontal direction of $M_q$ are the same for all $q \in \N$. Using a model surface $M$, we can number the horizontal cylinders of $M_q$ by $C_1,\dots,C_k$ in a consistent way. Furthermore, we can assume that $\Cc_q=\{C_1,\dots,C_j\}$ for all $q\in \N$.

By the definition of cycle of cylinders, there exists a closed curve $c$ on $M$ that crosses each core curve of $C_i$ once,  $i=1,\dots,j$. Let $(x_q,y_q)$ be the period of $c$ in $M_q$. We  have $y_q=h(\Cc_q)>0$, and $\lim_{q \ra +\infty} y_q=0$ by assumption.

By Theorem~\ref{thm:good:subsp:finite:G}, we can assume that  $M_q \in \widetilde{\Phi}(\U^*_{\G,\alpha})$ with some fixed $\alpha \in \Alp_\G^*$ for all $q \in \N$. Consequently, the condition on the lengths of the horizontal saddle connections implies that the circumferences of the horizontal cylinders  on the family $M_q$ are bounded above uniformly.
Therefore, we have $\lim_{q \ra +\infty} (\Aa(C_1)+\dots+\Aa(C_j))=0$, which means that the cycle $\Cc_q$ cannot contain all the horizontal cylinders, that is $j<k$.

Consider now the action of the horocycle flow $u_t:=\left(\begin{smallmatrix} 1 & t \\ 0 & 1 \end{smallmatrix}\right)$ on $M_q$. Since $\Sys(M_q,0)=\kappa$ for all $q\in \N$, by a fundamental result of Minsky-Weiss\cite{MinWei02},  the horocycle orbit of $M_q$ intersects a fixed compact subset of $\H(\ul{k})$. Thus by replacing $M_q$ by some point in $\{u_t\cdot M_q, \, t\in \R\}$ (which does not change the heights of the horizontal cylinders), we can assume that $M_q$ converges to a point $M_\infty \in \H(\ul{k})$ as $q \ra +\infty$.

Since $\lim_{q \ra +\infty} h(C_i)=0$, for $i=1,\dots,j$, the family of cylinders $\{C_1,\dots, C_j\}$ collapse to a union of horizontal saddle connections on $M_\infty$.
It follows that the period of $c$ in $M_\infty$ is $(x,0)$.
By considering the area, we see that some horizontal cylinders on $M_q$ must  remain on $M_\infty$. Thus, we can assume that $C_k$ remains on $M_\infty$.
Since $C_k$ is horizontal, the period of any core curve of $C_k$ and the period of $c$ are parallel as vectors in $\R^2$.

Note that we must have $M_\infty \in \Mm$. Since $\Mm$ is of rank one, a neighborhood of $M_\infty$ in $\Mm$ consists of surfaces $M'=A\cdot (M_\infty+v)$, where $A \in \GL^+(2,\R)$ close to the $\Id$, and $v$ is a small vector in $\ker(\pp)\cap T_{M_\infty}\Mm$. Since  $c$ is an element of $H_1(M,\Z)$ the period vectors of $c$ and of any core curve of $C_k$ remain parallel in a neighborhood of $M_\infty$ in $\Mm$. However, when $q$ is large enough, $M_q$ is contained in this neighborhood of $M_\infty$. But on $M_q$ those two vectors are not parallel since we have $y_q >0$. Thus we have a contradiction, which proves the lemma.
\end{proof}

\begin{Lemma}\label{lm:small:sc:finite:funct}
Assume that we have $\Sys(M,0) \leq \kappa \sqrt{\Aa(M)}$ for some $\kappa >0$.
Let $c$ be a non-horizontal saddle connection on $M$ such that $|c| <\Sys(M,0)$ and $|c| < \tilde{\eps}_0\sqrt{\Aa(M)}$, where $\tilde{\eps}_0$ is the constant of Lemma~\ref{lm:cyl:non:collapse}.   Then there is a unique ordered family $(j_1,\dots,j_s)$, where $j_\nu   \in \{1,\dots,k\}$ and $j_{\nu} \neq j_{\nu'}$ if $\nu \neq \nu'$, and a linear function $f$ of $(a_1,\dots,a_m)$ with integer coefficients such  that
$$
c=b_{j_1}+\dots+b_{j_s}+f(a_1,\dots,a_m) \in H_1(M,\Sig,\Z).
$$
Moreover, if the family $(j_1,\dots,j_s)$ is fixed, the function $f$ belongs to a finite family of cardinality at most $7^{s}m^{s+1}$.
\end{Lemma}
\begin{proof}
 We choose the orientation of $c$ to be upward.  Let $(x,y)$ be the period of $c$. We have $y>0$, and  $\allowbreak \max\{|x|,|y|\}  < \min\{\ell_1,\dots,\ell_k\}$ ($\ell_j$ is the circumference of $C_j$).
Observe that  $c$ crosses each horizontal cylinder at most once, since otherwise there would exist a cycle of horizontal cylinders whose total height is at most $ \tilde{\eps}_0\sqrt{\Aa(M)}$, which is a contradiction to Lemma~\ref{lm:cyl:non:collapse}.

Let $(C_{j_1},\dots,C_{j_s})$ be the sequence of horizontal cylinders crossed by $c$.
The starting point of $c$ must be the left endpoint of a the saddle connection $a_{i_0}$ in the bottom of $C_{j_1}$.
For $\nu=1,\dots,s-1$, let $a_{i_\nu}$ be the common saddle connection of the top of $C_{j_{\nu-1}}$ and the bottom of $C_{j_\nu}$ that intersects $c$.
Let $a_{i_s}$ be the saddle in the top of $C_{j_s}$ whose left endpoint is the terminating point of $c$.

For each $\nu \in \{1,\dots,s\}$, let $b'_{j_\nu}$ be the saddle connection in $C_{j_\nu}$ that joins the left endpoint of $a_{i_{\nu-1}}$ to the left endpoint of $a_{i_\nu}$ such that there is an embedded quadrilateral in $C_{j_{\nu}}$ bounded by $b'_{j_\nu}$, two horizontal segments $a'_{i_{\nu-1}}\subset a_{i_{\nu-1}}$ and $a'_{i_\nu} \subset a_{i_\nu}$, and a subsegment of $c$.
By construction, we  have
$$
c=b'_{j_1}+\dots+b'_{j_s} \in H_1(M,\Sig,\Z).
$$
Since $b_{j_\nu}-b'_{j_\nu}$ is a linear combination of the saddle connections in the top and bottom of $C_{j_\nu}$, we can write
$$
c=b_{j_1}+\dots+b_{j_s}+f(a_1,\dots,a_m) \in H_1(M,\Sig,\Z),
$$
where $f$ is a linear function with integer coefficients. It remains to show that $f$ belongs to a finite set once $(j_1,\dots,j_s)$ is fixed.

Recall that we have defined an equivalence relation on the set of crossing saddle connections of a cylinder horizontal cylinder $C$ as follows: two saddle connections are equivalent if they join the left endpoints of the same pair of saddle connections in the boundary of $C$.
To define $b_{j_\nu}$, we fix a pair of horizontal saddle connections, one in the top, the other in the bottom of $C_{j_\nu}$, and choose $b_{j_\nu}$ to be the unique saddle connection in the associated  equivalence class such that $0\leq \re(\omega(b_{j_\nu})) < \ell_{j_\nu}$.

We first observe that there exist  a fixed combination $f^0_{j_\nu}$ of horizontal saddle connections in the boundary of $C_{j_\nu}$, determined by the equivalence classes of $b_{j_\nu}$ and of $b'_{j_\nu}$,  and $n \in \Z$ such that
\begin{equation}\label{eq:crossing:rel}
b_{j_\nu}-b'_{j_\nu}= f^0_{j_\nu}+ nc_{j_\nu} \in H_1(M,\Sig,\Z),
\end{equation}
where  $c_{j_\nu}$ is a core curve of $C_{j_\nu}$.

Set $x_{j_\nu}:=\re(\omega(b_{j_\nu}))$ and $x'_{j_\nu}=\re(\omega(b'_{j_\nu}))$.
By definition,  there is an embedded  quadrilateral in $C_{j_\nu}$ that is formed by $b'_{j_\nu}$,  a subsegment $\check{c}$ of $c$, and two horizontal segments  $a'_{i_{\nu-1}}\subset a_{i_{\nu-1}}$ and $a'_{i_{\nu}} \subset a_{i_{\nu}}$.
Let $x'=\re(\omega(\check{c})), \; r'_{i_{\nu-1}}=\re(\omega(a'_{i_{\nu-1}})), \; r'_{i_\nu}= \re(\omega(a'_{i_\nu}))$.  Then
$$
|x'_{j_\nu}|=|r'_{i_{\nu-1}} +x'- r'_{i_\nu}| \leq |x'|+|r'_{i_{\nu-1}}-r'_{i_\nu}| \leq |x|+\max\{|a_{i_{\nu-1}}|, |a_{i_\nu}|\} < 2\ell_{j_\nu},
$$
and
$$
|x_{j_\nu}-x'_{j_\nu}| \leq |x_{j_\nu}|+ |x'_{j_\nu}| < 3\ell_{j_\nu}=3|\omega(c_{j_\nu})|.
$$
Relation \eqref{eq:crossing:rel} then implies
\begin{equation}\label{eq:condition:n}
|\omega(f^0_{j_\nu})+n \ell_{j_\nu}| < 3\ell{j_\nu}.
\end{equation}
Observe that  there are at most $7$ values of $n \in \Z$ such that the inequality \eqref{eq:condition:n} holds.
Since for each fixed sequence $(j_1,\dots,j_s)$,  we can have at most $m^{s+1}$ sequences $(a_{i_0}, \dots,a_{i_s})$, the lemma follows.
\end{proof}

\subsection{Integrations on the set of surfaces with several small saddle connections}
Having proved that one can cover a  full measure subset of $\Mm$ by a finite family of subsets of the form $\widetilde{\Phi}(\U^*_{\G,\alpha})$, we will now show that the integral of the function $e^{-\Aa}$ over the intersection of $\Mm(\eps^\nu)$ with each subset in this family is bounded by $O(\eps^{2\nu})$. Theorem~\ref{thm:estimate:eps:rk1} follows from this estimate.

Fix two positive real constants $\kappa$ and $\eps$.
Given a positive integer $\nu<d-1$, we define $\U^*_{\G,\alpha}(\kappa,\eps^\nu)$ to be the subset of $\U^*_{\G,\alpha}$ consisting of $\ol{x} \in \U^*_{\G,\alpha}$ such that the surface $M=\widetilde{\Phi}(\ol{x}) \in \Mm$ satisfies
\begin{itemize}
 \item[$\bullet$] $\Sys(M,0) < \kappa\sqrt{\Aa(M)}$,

 \item[$\bullet$] $M$ has $\nu$ non-horizontal saddle connections $e_1,\dots,e_\nu$ such that  $\allowbreak \max\{|e_1|,\dots,|e_\nu|\} < \eps\sqrt{\Aa(M)}$, and for any horizontal saddle connection $a$ of $M$, the family $\{a,e_1,\dots,e_\nu\}$ is linearly independent in $(T_M\Mm)^*$.
\end{itemize}
Given $M=(X,\omega) \in \widetilde{\Phi}(\U^*_{\G,\alpha})\subset \Mm$, pick  a horizontal saddle connection $a_i$ of $M$.
Let  $J:=\{j_1,\dots,j_{d-1}\}$ be a subset of $\{1,\dots,k\}$ such that $\{a_i,b_{j_1},\dots,b_{j_{d-1}}\}$ is basis of $(T_M\Mm)^*$.
 Then for any $j\in \{1,\dots,k\}$, $y_j$ is a linear functions of $(y_{j_1},\dots,y_{j_{d-1}})$ (where $y_j=\im(\omega(b_j))$).
Hence there is a  linear function $f$ of $(y_{j_1},\dots,y_{j_{d-1}})$, whose coefficients are determined by $(\G,\alpha)$ and the choice of $a_i$ and $J$, such that
$$
\Aa(M)=\sum_{j=1}^k \Aa(C_j)=\sum_{j=1}^k\ell_jy_j=|a_i|\cdot f(y_{j_1},\dots,y_{j_{d-1}}).
$$
Since there are finitely many choices for $a_i$ and $J$, we see that $f$ belongs to a finite set.
We define
$$
\chi(\G,\alpha):=\max \{||f|| \, : \, i \in \{1,\dots,m\}, J \subset \{1,\dots,k\}, \{a_i\}\cup \{b_j, \, j \in J\} \text{ is a basis of } (T_M\Mm)^*\}.
$$

\begin{Proposition}\label{prop:estimate:kap:eps:1}
There is a positive constant  $\tilde{K}$ depending on $(\G,\alpha)$, such that if $\eps < \frac{1}{\kappa\chi(\G,\alpha)}$, then
 $$
 \Ic:=\int_{\S^1\times\U^*_{\G,\alpha}(\kappa,\eps^\nu)} e^{-\Aa\circ\widetilde{\Phi}}\widetilde{\Phi}^*d\vol < \tilde{K} \kappa^{\nu+2}\eps^\nu.
 $$
\end{Proposition}
\begin{proof}
To simplify the notation,  we will omit the subscript $(\G,\alpha)$.
Consider a surface $M=\Phi(\bar{x})$ where $\bar{x}=(\bar{r},x_1,y_1,\dots,x_k,y_k) \in \U^*(\kappa,\eps^\nu)$.
We can assume that  $a_1$ is the smallest  horizontal saddle connection of $M$, that is  $|a_1|=\Sys(M,0)$.
By assumption, $M$ contains $\nu$ non-horizontal saddle connections $e_1,\dots,e_\nu$ of length at most $\eps\sqrt{\Aa(M)}$.
We can assume that the set of horizontal cylinders crossed by at least one of $e_1,\dots,e_\nu$ is $\{C_1,\dots,C_s\}$.
For $j\in \{1,\dots,s\}$, since the height of $C_j$ is bounded above by the length of some $e_i$, we must have  $0< y_j < \eps\sqrt{\Aa(M)}$.

Observe that the subspace of $(T_M\Mm)^*$ spanned by $\{a_1,b_1,\dots,b_s\}$ contains $e_1,\dots,e_\nu$.
Since the family $\{a_1,e_1,\dots,e_\nu\}$ is independent by assumption,  we draw that the dimension $d'$ of this subspace is at least $\nu+1$. Renumbering the cylinders if necessary, we can assume that $\{a_1,b_1,\dots,b_{d'}\}$ is a basis of this subspace. Note that since $d'\geq \nu$, we have $0 < y_j < \eps\sqrt{\Aa(M)}$ for $j\in \{1,\dots,\nu\}$.

We add to the family $\{a_1,b_1,\dots,b_{d'}\}$ some  saddle connections in $\{b_{s+1},\dots,b_k\}$ to get a basis $\Bc'$ of $(T_M\Mm)^*$.
We can assume that $\Bc'=\{a_1,b_1,\dots,b_{d-1}\}$. Using the basis $\Bc'$, we see that there is an $\R$-linear bijective map $\phi: \R\times \R^{2(d-1)} \ra V_\alpha$.

Let $r=|a_1|=\omega(a_1)$ and $x_j+\imath y_j=\omega(b_j), \, j=1,\dots,k$.
Note that the circumference of $C_j$ is given by $\ell_j=\lbd_jr$ where the constant  $\lbd_j$ is  determined by $(\G,\alpha)$,
Set
$$
\UU=\{(r,x_1,y_1,\dots,x_{d-1},y_{d-1}) \in \R_{>0}\times \R^{2(d-1)}, \; 0 \leq x_i < \lbd_i r, \; y_i > 0, \; i=1,\dots,d-1\}.
$$
By construction, we have $\phi^{-1}(\U^*_{\G,\alpha}) \subset \UU$.
Define
$$
\begin{array}{cccc}
\Phi: & \S^1\times\phi^{-1}(\U_{\G,\alpha}) &  \ra & \Mm \\
                                             & (\theta,r,x_1,y_1,\dots,x_{d-1},y_{d-1}) & \mapsto & e^{\imath\theta}\cdot \widetilde{\Phi}\circ\phi(r,x_1,y_1,\dots,x_{d-1},y_{d-1})
\end{array}
$$
Recall that the volume form $\vol$ on $\Mm$ is proportional to the Lebesgue measure. Thus there is a positive constant $\lbd$ such that
$$
\Phi^*d\vol=\lbd r d\theta drdx_1dy_1\dots dx_{d-1} dy_{d-1}.
$$
On $\UU$ the area function $\Aa\circ\Phi$ is given by $\Aa=r\cdot f$, where $f=\sum_{i=1}^{d-1}\xi_iy_i$ is a linear function of $(y_1,\dots,y_{d-1})$.
We claim that there exists $i>\nu$ such that $\xi_i \neq  0$. Indeed, if we have $\xi_{\nu+1}=\dots=\xi_{d-1}=0$,  then $f$ is a function of $(y_1,\dots,y_{\nu})$.
Since $\max\{|y_1|,\dots,|y_{\nu}|\} < \eps \sqrt{\Aa(M)}< \frac{\sqrt{\Aa(M)}}{\kappa\chi}$  and $r < \kappa\sqrt{\Aa(M)}$, we would have
$$
\Aa(M) < \kappa\sqrt{\Aa(M)}\cdot ||f||\cdot \eps \sqrt{\Aa(M)} \leq  \frac{\kappa\chi}{\kappa\chi}\Aa(M) = \Aa(M),
$$
which is a contradiction. Thus we can always assume that $\xi_{d-1} \neq 0$.
We will now use the following  change of variables on $\UU$, $(r,x_1,y_1,\dots,x_{d-1},y_{d-1}) \mapsto (r,x_1,y_1,\dots,x_{d-1},\Aa)$.
An elementary computation gives
$$
drdx_1dy_1\dots dx_{d-1}dy_{d-1}=\frac{1}{\xi_{d-1}r}drdx_1dy_1\dots dx_{d-1}d\Aa.
$$
Note that we also have
$$
\Aa(M)  \geq \sum_{i=\nu+1}^{d-2}\Aa(C_j)=r\cdot\sum_{i=\nu+1}^{d-2} \lbd_iy_i.
$$
Define
$$
\UU(\kappa,\eps^\nu):= \{(r,x_1,y_1,\dots,x_{d-1},y_{d-1}) \in \UU, \;  0< r <\kappa\sqrt{\Aa\circ\Phi}, \, 0< y_i < \eps\sqrt{\Aa\circ\Phi}, \; i=1,\dots,\nu\}.
$$
By assumption we have $\bar{x} \in \phi(\UU(\kappa,\eps^\nu))$. We have
\begin{eqnarray*}
\int_{\S^1\times\UU(\kappa,\eps^\nu)} e^{-\Aa\circ\Phi}\Phi^*d\vol & \leq & 2\pi \lbd \int_{\UU(\kappa,\eps^\nu)} e^{-\frac{\Aa}{2}-\frac{1}{2}r\sum_{j=\nu+1}^{d-2}\lbd_jy_j}
                                                                                                                            rdrdx_1dy_1\dots dx_{d-1}dy_{d-1} \\
                                                                   & \leq & \frac{2\pi\lbd}{\xi_{d-1}} \int_0^{+\infty}\int_{0}^{\kappa\sqrt{\Aa}}dr
                                                                   \left( \prod_{i=1}^{\nu} \int_0^{\lbd_ir}dx_i \int_0^{\eps\sqrt{\Aa}}dy_i\right)\times\\
                                                                   &      & \times \left(\prod_{i=\nu+1}^{d-2} \int_0^{\lbd_ir}dx_i\int_0^{+\infty} e^{-r\lbd_iy_i/2}dy_i\right)
                                                                   \left(\int_{0}^{\lbd_{d-1}r}dx_{d-1}\right) e^{-\Aa/2}d\Aa\\
                                                                   & \leq & \frac{2\pi\lbd\lbd_1\dots\lbd_{\nu}\lbd_{d-1}}{\xi_{d-1}}  2^{d-2-\nu} \eps^{\nu}\int_0^\infty  \left( \int_0^{\kappa\sqrt{\Aa}}r^{\nu+1} dr \right)\Aa^{\nu/2}e^{-\Aa/2}d\Aa\\
                                                                   & \leq & \tilde{K}\eps^{\nu}\kappa^{\nu+2}\\
\end{eqnarray*}
where  $\tilde{K}$ is a constant depending on $(\nu,\lbd,\lbd_1,\dots,\lbd_\nu,\lbd_d,\xi_{d-1})$. Since the basis $\Bc'$ belongs to a finite family, the set $\U^*(\kappa,\eps^\nu)$ is covered  finitely many sets  of the form $\phi(\UU^*(\kappa,\eps^\nu))$.
Thus the proposition follows from the inequality above.
\end{proof}

\begin{Corollary}\label{cor:estimate:eps:1}
There exist some constants $\eps_1>0$ and $K>0$ depending only on $\Mm$ such that if $\eps < \eps_1$, then
$$
\int_{\S^1\times\U^*_{\G,\alpha}(\eps,\eps^\nu)} e^{-\Aa\circ\widetilde{\Phi}}\widetilde{\Phi}^*d\vol < K\eps^{2(\nu+1)}.
$$
\end{Corollary}
\begin{proof}
The corollary follows from Proposition~\ref{prop:estimate:kap:eps:1} with  $\eps_1 >0$ such that $\eps_1 < \frac{1}{\eps_1\chi(\G,\alpha)}$, and $\kappa=\eps$.
\end{proof}

For each $(\G,\alpha)$ as above, we define $\U^*_{\G,\alpha}(\kappa,\eps^\nu)' \subset \U^*_{\G,\alpha}$ to be the set of $\bar{x} \in \U^*_{\G,\alpha}$ such that the surface
$M=\widetilde{\Phi}(\bar{x})$ satisfies $\Sys(M,0) < \kappa\sqrt{\Aa(M)}$, and $M$ has $\nu$ saddle connections $e_1,\dots,e_\nu$ such that
\begin{itemize}
\item[$\bullet$] $|e_i| < \eps\sqrt{\Aa(M)}, \; i=1,\dots,\nu$,

\item[$\bullet$] the family $\{e_1,\dots,e_\nu\}$ is independent in $(T_M\Mm)^*$.
\end{itemize}
Note that the difference between $\U^*_{\G,\alpha}(\kappa,\eps^\nu)$ and $\U^*_{\G,\alpha}(\kappa,\eps^\nu)'$ is that for the latter, we do not suppose that $\{a,e_1,\dots,e_\nu\}$ is an independent family in $(T_M\Mm)^*$ for any horizontal saddle connection $a$. In other words,  if $\bar{x}\in \U^*_{\G,\alpha}(\kappa,\eps^\nu)'$ then one of the $e_i$ could be a horizontal saddle connection.

\begin{Proposition}\label{prop:estimate:kap:eps:2}
For any fixed $\kappa >0$, there exist some positive real constants $\eps_0 >0$ and $K' >0$ such that if $0< \eps < \eps_0$ then
$$
\Ic':=\int_{\S^1\times\U^*_{\G,\alpha}(\kappa,\eps^\nu)'}e^{-\Aa\circ\widetilde{\Phi}}\widetilde{\Phi}^* d\vol < K'\eps^{2\nu}.
$$
\end{Proposition}
\begin{proof}
Since $\G$ and $\alpha$ is fixed, we will omit them in the notation through out the proof.
In what follows, we always suppose that $0< \eps < \kappa$. The existence of $\eps_0$ will be derived along the way.
Since $\eps < \kappa$, $\U^*(\kappa,\eps^\nu)'$ contains the set $\U^*(\eps,\eps^{\nu-1})$. Define
$$
\U^*(\kappa, \eps^\nu)'':=\U^*(\kappa,\eps^\nu)'\setminus\U^*(\eps,\eps^{\nu-1}).
$$
Since we have shown that  (see Corollary~\ref{cor:estimate:eps:1})
$$
\Ic:=\int_{\S^1\times\U^*(\eps,\eps^{\nu-1})}e^{-\Aa\circ\widetilde{\Phi}}\widetilde{\Phi}^*d\vol  < K\eps^{2\nu}
$$
it remains to show that
\begin{equation*}\label{eq:int:kap:esp:2}
\Ic'':=\int_{\S^1\times\U^*(\kappa,\eps^{\nu})''}e^{-\Aa\circ\widetilde{\Phi}}\widetilde{\Phi}^*d\vol  < K''\eps^{2\nu}.
\end{equation*}
for some constant $K''>0$ if $\eps>0$ is small enough.

Let $\tilde{\eps}_0>0$ be the constant of Lemma~\ref{lm:cyl:non:collapse}.
Consider a surface  $M=(X,\omega)=\widetilde{\Phi}(\bar{x})$ with $\bar{x}\in \U^*(\kappa,\eps^\nu)''$ and $\eps<\tilde{\eps}_0$. By assumption, we have
\begin{itemize}
\item[$\bullet$] $\eps\sqrt{\Aa(M)}\leq \Sys(M,0) < \kappa\sqrt{\Aa(M)}$, and

\item[$\bullet$] $M$ contains $\nu$ saddle connections $e_1,\dots,e_\nu$, none of which is horizontal, satisfying $|e_1| < \eps\sqrt{\Aa(M)}$ and $\{e_1,\dots,e_\nu\}$ is independent in $(T_M\Mm)^*$.
\end{itemize}
Since $|e_i| < \eps\sqrt{\Aa(M)} \leq  \Sys(M,0)$, by Lemma~\ref{lm:small:sc:finite:funct}, there is a finite set of linear combinations of $\{a_1,\dots,a_m,b_1,\dots,b_k\}$ such that the homology class of each $e_i$  in $H_1(M,\Sig,\Z)$ can be expressed by a function in this family.
In what follows, we assume that the horizontal $a_1$ of $M$ satisfies $|a_1|=\Sys(M,0)$, and choose for each $e_i$ a function in the finite family mentioned above.
We have two cases:

\medskip

\noindent {\em Case 1:} $\{a_1,e_1,\dots,e_\nu\}$ is not independent in $(T_M\Mm)^*$, that is $a_1$ is equal to a linear combination of $(e_1,\dots,e_\nu)$.   The set of $\bar{x} \in \U^*(\kappa,\eps^\nu)''$ satisfying this condition will be denoted by $\U^*(\kappa,\eps^\nu)''_1$.
Let $r=|a_1|=\omega(a_1)$, and $u_i+\imath v_i=\omega(e_i), \, i=1,\dots,\nu$. Then  $r$ is a linear function of $u_1,\dots,u_\nu$. Thus there is a constant $\delta$ such that $r \leq \delta \cdot \max\{|u_1|,\dots,|u_\nu|\}$. By assumption, we have $r < \delta\eps\sqrt{\Aa(M)}$.

We now observe that, up to some renumbering, the family $(a_1,e_1,\dots,e_{\nu-1})$ is independent in $(T_M\Mm)^*$.
Thus we have $\bar{x} \in \U^*(\delta\eps,\eps^{\nu-1})$.
Since there are only finitely many choices for the homology class of $e_i$ in $H_1(M,\Sig,\Z)$, it follows that $\delta$ belongs to a finite set.
Hence there is a constant $\delta_0>0$ such that $\U^*(\kappa,\eps^\nu)''_1 \subset \U^*(\delta_0\eps,\eps^{\nu-1})$ for $\eps>0$ small enough.
Using Corollary~\ref{cor:estimate:eps:1}, we conclude that there are some constants $\eps''_1>0$ and $K''_1 >0$ such that for any $0< \eps < \eps''_1$, we have
$$
    \int_{\S^1\times\U^*(\kappa,\eps^\nu)''_1}e^{-\Aa\circ\widetilde{\Phi}} \widetilde{\Phi}^* d\vol \leq \int_{\S^1\times\U^*(\delta_0\eps,\eps^{\nu-1})} e^{-\Aa\circ\widetilde{\Phi}} \widetilde{\Phi}^* d\vol < K''_1\eps^{2\nu}.
$$
\medskip

\noindent {\em Case 2:} $\{a_1,e_1,\dots,e_\nu\}$ is  independent in $(T_M\Mm)^*$.  The set of $\bar{x} \in \U^*(\kappa,\eps^\nu)''$ satisfying this condition will be denoted by $\U^*(\kappa,\eps^\nu)''_2$. This case will be handled in a similar manner to Proposition~\ref{prop:estimate:kap:eps:1}.

Let $s=d-\nu-1$.
We first add to the family $\{a_1,e_1,\dots,e_\nu\}$ $s$ saddle connections in the family $\{b_1,\dots,b_k\}$ to get a basis $\Bc'$ of $(T_M\Mm)^*$.
Without loss of generality, we can assume that $\Bc'=\{a_1,e_1,\dots,e_\nu,b_1,\dots,b_{s}\}$.
Using $\Bc'$, we define an $\R$-linear bijection $\phi: \R \times\R^{2(d-1)} \ra V_\alpha$, and set
$$
\begin{array}{cccc}
\Phi: & \S^1\times \phi^{-1}(\U_{\G,\alpha})& \ra & \Mm\\
      & (\theta, w) & \mapsto & e^{\imath\theta}\cdot\widetilde{\Phi}\circ\phi(w)\\
\end{array}
$$
The map $\phi$ is defined in such a way that if $\allowbreak w=(r,u_1,v_1,\dots,u_\nu,v_\nu,  x_1,y_1,\dots,x_s,y_s) \in \phi^{-1}(\U_{\G,\alpha})$,
and  $M=(X,\omega)=\widetilde{\Phi}\circ\phi(w)$, then $\omega(a_1)=r, \omega(e_i)=u_i+\imath v_i, \, i=1,\dots,\nu,$ and $\omega(b_j)=x_j+\imath y_j, \; j=1,\dots,s$.
Let $\mu_{2d-1}$ denote  the standard Lebesgue measure on $\R\times\R^{2(d-1)}$, that is
$$
    d\mu_{2d-1}=drdu_1dv_1\dots du_\nu dv_\nu dx_1dy_1\dots dx_{s}dy_{s}.
$$
By an elementary computation, we get
$$
\Phi^*d\vol=\lbd r d\theta d\mu_{2d-1}
$$
where $\lbd$ is a constant determined by the basis $\Bc'$.

Since for any $j \in \{1,\dots,k\}$ the height $y_j$ of $C_j$ is a linear function of $(v_1,\dots,v_{\nu},y_1,\dots,y_{s})$, we have
$$
    \Aa\circ\Phi=r\cdot \left(f(v_1,\dots,v_\nu)+\sum_{j=1}^{s}\xi_jy_j\right)
$$
where $f$ is a linear function determined by $(\G,\alpha)$ and the homology classes of $e_1,\dots,e_{\nu}$ in $H_1(M,\Sig,\Z)$.
By the same argument as in Proposition~\ref{prop:estimate:kap:eps:1}, we see that if $\eps>0$ is small enough then we can always assume that $\xi_{s}\neq 0$.
We will now use the change of variables
$$
(r,u_1,v_1,\dots,u_\nu,v_\nu, x_1,y_1,\dots, x_s,y_{s}) \mapsto (r,u_1,v_1,\dots,u_\nu,v_\nu,x_1,y_1,\dots,x_{s-1},y_{s-1},x_{s},\Aa).
$$
A direct computation shows
$$
  d\mu_{2d-1}=\frac{1}{\xi_s r}drdu_1dv_1\dots,du_\nu dv_\nu dx_1dy_1\dots dx_{s-1}dy_{s-1}dx_sd\Aa.
$$
Hence
$$
\Phi^*d\vol=\frac{\lbd}{\xi_s}d\theta dr du_1 dv_1\dots du_{\nu}dv_\nu dx_1dy_1\dots dx_{s-1}dy_{s-1}dx_s d\Aa.
$$
Let $\UU''_2(\kappa,\eps^\nu) \subset \R\times\R^{2(d-1)}$ be the domain defined by the following conditions:
\begin{itemize}
     \item [$\bullet$] $\Aa\circ\Phi >0$,


     \item[$\bullet$] $0 < r  < \kappa\sqrt{\Aa\circ\Phi}$,

     \item[$\bullet$] $\max\{|u_i|,|v_i|\}< \eps\sqrt{\Aa\circ\Phi},\, i=1,\dots,\nu,$

     \item[$\bullet$] $0\leq x_j < \lbd_jr, \text{ and } 0< y_j, \, j=1,\dots,s$.
\end{itemize}
where $\lbd_jr$ is the circumference of $C_j$ (the constant $\lbd_j$ is determined by $(\G,\alpha)$).
By assumption, we have $\bar{x} \in \phi( \UU''_2(\kappa,\eps^\nu))$.
Using
$$
    \Aa(M)\geq \frac{\Aa(M)}{2}+ \frac{1}{2}\sum_{j=1}^{s-1} \Aa(C_j)=\frac{\Aa(M)}{2} + \frac{1}{2}r\sum_{j=1}^{s-1} \lbd_j y_j
$$
we get
\begin{eqnarray*}
    \int_{\S^1\times \UU''_2(\kappa,\eps^\nu)}e^{-\Aa\circ\Phi}\Phi^*d\vol & \leq  & \frac{2\pi\lbd}{\xi_s} \int_{0}^{+\infty} \left(\int_0^{\kappa\sqrt{\Aa}}dr \prod_{i=1}^\nu \left(\int_{-\eps\sqrt{\Aa}}^{\eps\sqrt{\Aa}}  du_i \int_{-\eps\sqrt{\Aa}}^{\eps\sqrt{\Aa}} dv_i \right) \times \right. \\
                                                            &      & \times\left. \prod_{j=1}^{s-1} \left( \int_0^{\lbd_jr} dx_j \int_0^{+\infty}e^{-\lbd_jry_j/2}dy_j \right) \int_0^{\lbd_sr}dx_s \right)e^{-\Aa/2}d\Aa\\
                                                            & \leq  & \tilde{K}\eps^{2\nu}.
\end{eqnarray*}
where $\tilde{K}$ is a constant depending only on $(\kappa,\nu,\lbd,\lbd_s,\xi_s)$.
Since $\U^*(\kappa,\eps^\nu)''_2$ is covered by a finite family of subsets of $V_\alpha$ of the form $\phi(\UU''_2(\kappa,\eps^\nu))$, we draw that there exists a constant $K''_2>0$ such that if $\eps>0$ is small enough then
$$
    \int_{\S^1\times\U^*(\kappa,\eps^\nu)''_2} e^{-\Aa\circ\widetilde{\Phi}}\widetilde{\Phi}^*d\vol < K''_2 \eps^{2\nu}.
$$
Thus there are some constants $K''>0$ and $\eps_0>0$ such that if $0< \eps <\eps_0$ then
$$
    \Ic''=\int_{\S^1\times\U^*(\kappa,\eps^\nu)''_1}e^{-\Aa\circ\widetilde{\Phi}}\widetilde{\Phi}^*d\vol + \int_{\S^1\times\U^*(\kappa,\eps^\nu)''_2} e^{-\Aa\circ\widetilde{\Phi}}\widetilde{\Phi}^*d\vol < K''\eps^{2\nu}.
$$
The proof of the proposition is now complete.
\end{proof}

\subsection{Proof of Theorem~\ref{thm:estimate:eps:rk1}}\label{sec:prf:main:thm}
\begin{proof}
 It is a well known result by Masur-Smillie that for each stratum, there exists a constant $\kappa_0>0$ such that every surface in that stratum has a simple closed geodesic of length bounded by $\kappa_0\sqrt{\Aa}$. Some explicit estimates of the constant $\kappa_0$ are obtained  by Vorobets in \cite{Vor03}.

 Consider now a surface $M \in \Mm(\eps^\nu)$. Since the set of surfaces admitting a non-stable cylinder decomposition has zero measure in $\Mm$ (see Lemma~\ref{lm:stable:full:measure}), we can assume that every cylinder decomposition of $M$ is $\Mm$-stable. By the result of Masur-Smillie, $M$ admits an $\Mm$-stable cylinder decomposition in a direction $\theta$ such that $\Sys(M,\theta) < \kappa_0\sqrt{\Aa}$. Let $\G$ be the diagram of this cylinder decomposition, and $\Alp_\G^*$ be the finite set in Theorem~\ref{thm:good:subsp:finite:G}.
 By definition, there exists $\alpha \in \Alp_\G^*$ such that $M$ is contained in the image of $\S^1\times\U^*_{\G,\alpha}(\kappa_0,\eps^\nu)'$ by $\widetilde{\Phi}_{\G,\alpha}$.
 Since the set of cylinder diagrams for each stratum is finite, and for each cylinder diagram the set $\Alp_\G^*$ is finite by Theorem~\ref{thm:good:subsp:finite:G}, from Proposition~\ref{prop:estimate:kap:eps:2} we get
 $$
 \int_{\Mm(\eps^\nu)} e^{-\Aa}d\vol < K_0\eps^{2\nu}.
 $$
 for some constant $K_0>0$ if $\eps>0$ is small enough.

 For the last assertion, consider the following coordinate change $\Mm \ra \R_{>0}\times \Mm_1, M \mapsto (t,\frac{1}{t}\cdot M)$, with $t=\sqrt{\Aa(M)}$.
 It follows
 $$
 K_0\eps^{2\nu} > \int_{\Mm(\eps^\nu)}e^{-\Aa}d\vol=\int_0^{+\infty}t^{2d-1}e^{-t^2}dt\int_{\Mm_1(\eps^\nu)}d\vol_1=\frac{(d-1)!}{2}\vol_1(\Mm_1(\eps^\nu)),
 $$
 and we get the desired inequality.
\end{proof}


\appendix
\section{Strata of Abelian differentials}
It is well known that the space $\Hg$ is a complex algebraic orbifold of dimension $2g+n-1$, equipped with a natural volume form $\vol$ coming from the Lebesgue measure of $\C^{2g+n-1}$ via the period mapping. Let $\vol_1$ be the volume form on $\Hgi$ which is defined by the formula
$$
d\vol=d\vol_1 d\Aa,
$$
where $\Aa$ is the area function.
\begin{Definition}\label{def:inde:fam:1}
 A family  $\{\g_1,\dots,\g_m\}$  of $m$ saddle connections on a translation surface $M \in \Hg$ will be called  {\em independent} if $\{\g_1,\dots,\g_m \}$ is linearly independent in 
 $H_1(M,\Sig,\R)$, where $\Sig$ is the set of singularities of $M$.
\end{Definition}
By convention, two saddle connections $\g,\g'$ are said to be {\em disjoint} if we have $\inter(\g) \cap \inter(\g')=\vide$.

\medskip 

Given $\ul{\eps}=(\eps_1,\dots,\eps_m) \in (\R_{>0})^m$, we denote by $\Hgie$ the subset of $\Hgi$ consisting of surfaces $M\in \Hgi$ on which there exists an ordered independent family of $m$ disjoint saddle connections $\{\g_1,\dots,\g_m\}$ such that
$$
|\g_j| < \eps_j, \; j=1,\dots,m.
$$
\noindent When $\eps_j$ are small enough, such a family  exists only if $m < \dim_\C\Hg =2g+n-1$. In this section, we will give a proof the following
\begin{Theorem}\label{thm:sm:SC:vol}
For any $m< \dim_\C\Hg$, there exists a constant $K=K(\underline{k},m)$ such that
$$
\vol_1(\Hgie) < K\eps_1^2\dots\eps_m^2.
$$
\end{Theorem}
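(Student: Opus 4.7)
\medskip
\noindent\textbf{Proof proposal.} The strategy is to lift the problem to a cover of $\Hgi$ parametrizing surfaces together with the short independent family of saddle connections, and then to use period coordinates on this cover.

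First I would introduce the ``marked'' space $\Hgme$, namely the space of pairs $\big((X,\omega), (\gamma_1,\ldots,\gamma_m)\big)$ where $(X,\omega)\in\Hgi$ and $(\gamma_1,\ldots,\gamma_m)$ is an ordered independent family of saddle connections on $X$ satisfying $|\gamma_j|<\eps_j$ for $j=1,\ldots,m$. This space carries a natural volume form $\tilde\mu_1^{(m)}$ induced from $\mu_1$, and the forgetful map $\Hgme\to\Hgie$ is, by definition of $\Hgie$, surjective. Hence
$$
\mu_1(\Hgie) \;\leq\; \tilde\mu_1^{(m)}(\Hgme).
$$

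Next I would exploit period coordinates on $\Hgme$. Condition (ii) of Definition~\ref{def:inde:fam:1} guarantees that the classes $[\gamma_1],\ldots,[\gamma_m]$ can be completed to a $\Z$-basis of $H_1(X,\{p_1,\ldots,p_n\};\Z)$; the associated periods $(z_1,\ldots,z_d)$ (with $d=\dim_\C\Hg$ and $z_j=\int_{\gamma_j}\omega$) give local complex coordinates on $\Hgm$, hence on $\Hgme$. In these coordinates the ambient volume form $\mu$ agrees (up to a fixed normalizing constant) with Lebesgue measure on $\C^d$, and the length conditions $|\gamma_j|<\eps_j$ translate to the disk conditions $|z_j|<\eps_j$ for $j=1,\ldots,m$.

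The third step is a fiberwise integration. Taking into account the relation $d\mu = d\mu_1\,d\Aa$ and disintegrating the volume form along the map $(z_1,\ldots,z_m)\colon \Hgm\to\C^m$, the integration over $z_1,\ldots,z_m$ on the polydisk $\prod_j\{|z_j|<\eps_j\}$ produces a factor
$$
\prod_{j=1}^m \pi\,\eps_j^{\,2},
$$
and what remains is an integral over the ``transverse'' periods $(z_{m+1},\ldots,z_d)$ constrained by $\Aa\leq 1$.

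The main obstacle, and the crux of the proof, is to bound this residual transverse integral by a constant independent of $\ul\eps$. This is exactly the type of estimate established in the author's earlier work \cite{Ng12}: for $\ul\eps$ sufficiently small (so that the independence condition is automatic and the marked saddle connections are forced to remain short), the integral of the transverse volume form over the fiber is majorized, uniformly in $(z_1,\ldots,z_m)$, by a constant $C'(\ul k,m)$ depending only on the stratum and on $m$. Combined with the polydisk contribution $\pi^m\eps_1^{\,2}\cdots\eps_m^{\,2}$, this yields the desired inequality with $C=C(\ul k, m)=\pi^m C'(\ul k,m)$. The assumption $m<\dim_\C\Hg$ is used here to ensure that the transverse directions are non-trivial and that the area-one slice is compact enough, in a suitable sense, for the estimate from \cite{Ng12} to apply.
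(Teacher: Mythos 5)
Your outline matches the paper's high-level strategy: pass to the marked space $\Hgm$, use period coordinates adapted to $\{\g_1,\dots,\g_m\}$, and extract the factor $\pi^m\eps_1^2\cdots\eps_m^2$ from integrating over the polydisk in the first $m$ coordinates. (The paper implements the reduction slightly differently: it replaces the sharp cutoff by the smooth weight $\mathfrak{F}_{\ul{\eps}}=\exp(-\sum_j|\g_j|^2/\eps_j^2-\Aa)$, proves $\int_{\Hgm}\mathfrak{F}_{\ul{\eps}}\,d\mu<K\eps_1^2\cdots\eps_m^2$, and then passes to $\mu_1(\Hgie)$ via the cone $\Hge=\R_+^*\cdot\Hgie$ and polar coordinates in $\Aa$; but that is a routine difference.)

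The genuine gap is your fourth step. The statement that the residual integral over the transverse periods $(z_{m+1},\dots,z_d)$, restricted to the fiber $\{\Aa\le 1\}$ (or weighted by $e^{-\Aa}$), is bounded uniformly in $(z_1,\dots,z_m)$ is not an off-the-shelf estimate one can simply cite from \cite{Ng12}: it is the entire technical content of the present paper, occupying Sections~\ref{sect:Triang}--\ref{sect:prf:int:fin}. Two things have to be established. First, period coordinates are only local, and a single surface near the boundary of the stratum can carry many independent families and many overlapping charts, so a naive fiberwise integration has an uncontrolled multiplicity; the paper resolves this by constructing a canonical \emph{special triangulation} for each point of the full-measure set $\Hgms$, yielding a finite partition of $\Hgms$ into sets each globally and injectively parametrized by a domain $\Dc$ in a linear subspace of $\C^{N_1}$ (Theorem~\ref{thm:part:fin}). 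Second, even on one such piece the transverse fiber is noncompact and its finite volume is not automatic (recall that finiteness of $\mu_1(\Hgi)$ is itself a theorem of Masur and Veech); the paper obtains the uniform bound by choosing the primary and auxiliary index families, changing variables from the imaginary parts $y_k$ to the triangle areas $\eta_k$ (whose sum is dominated by $\Aa$) and using the inequalities (\ref{Ineq:hor:leng}) to confine the real parts $x_k$ to intervals of length $a_k$, so that the Jacobian factors $1/(a_{m+1}\cdots a_{2g+n-1})$ cancel exactly. Without an argument of this kind, your "crux" step is an assertion, not a proof.
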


\subsection{Translation surface with marked saddle connections}\label{sect:TS:w:mSC}
\subsubsection{Local chart and volume form}\label{sect:chart:vol}
\begin{Definition}\label{def:TS:w:mSC}
Let us denote by $\Hgm$ the moduli space of pairs $(M, \{\g_1,\dots,\g_m\})$, where
\begin{itemize}
 \item $M$ is a translation surface in $\Hg$,
 \item $\{\g_1,\dots,\g_m\}$ is an independent ordered family of disjoint saddle connections in $M$
\end{itemize}
\end{Definition}


There is a forgetful map $F: \Hgm \ra \Hg$ which maps  a pair $(M,\{\g_1,\dots,\g_m\})$ to  $M$. If $M'\in \Hg$ is close enough to $M$, then there exist $m$ saddle connections $\g'_1,\dots,\g'_m$ on $M'$ such that $(M,\{\g_1,\dots,\g_m\})$ and $(M',\{\g'_1,\dots,\g'_m\})$ have the same topological type, that is there exists a homeomorphism $\varphi : M \ra M'$
 which realizes a bijection between  the sets of singularities  of $M$ and $M'$ preserving the orders, and satisfies $\varphi(\g_i)=\g'_i$.
Therefore, we can identify a neighborhood of $(M, \{\g_1,\dots,\g_m\})$ in $\Hgm$ with a neighborhood of $M$ in $\Hg$. We can  use period mappings to define local charts for $\Hgm$,  and pullback the volume form $\vol$ of $\Hg$ to $\Hgm$.

\subsubsection{Energy functions}
 For every $\ul{\eps}=(\eps_1,\dots,\eps_m) \in (\R_{>0})^m$, we define a function $\mathfrak{F}_{\ul{\eps}}: \Hgm \rightarrow \R$  by
$$
\mathfrak{F}_{\ul{\eps}}: (M,\{\g_1,\dots,\g_m\}) \mapsto \exp(-\sum_{j=1}^m\frac{ |\g_j|^2}{\eps_j^2}-\Aa(M)).
$$
Theorem~\ref{thm:sm:SC:vol} is a consequence of the following

\begin{Theorem}\label{thm:int:fin:EF}
There exists a constant $K=K(\ul{k},m)$ such that
$$
\int_{\Hgm} \mathfrak{F}_{\ul{\eps}} d\vol < K \eps_1^2\dots\eps_m^2.
$$
\end{Theorem}
The  proof of this theorem will be given in Section~\ref{sec:prf:thm:int:fin:EF}

\subsection{Proof of Theorem~\ref{thm:sm:SC:vol}}\label{sec:prf:abeldiff}
\begin{proof}
Let us now give the proof of \ref{thm:sm:SC:vol} using Theorem~\ref{thm:int:fin:EF}.
Define $\Hge=\R^*_+\cdot\Hgie$, where the action of $\R^*_+$ is given by $t\cdot(X,\omega)=(X,t\omega), \; t \in \R^*_+$.
Let $\Hgme \subset \Hgm$ be the pre-image of $\Hge$ under  the forgetful map $F: \Hgm \ra \Hg$.
Theorem~\ref{thm:int:fin:EF} implies
\begin{equation}\label{ineq:sm:SC:vol:1}
 \int_{\Hgme}\mathfrak{F}_{\ul{\eps}}d\vol < \int_{\Hgm} \mathfrak{F}_{\ul{\eps}} d\vol< K\eps_1^2\dots\eps_m^2.
\end{equation}
Let us define a function $ f_{\ul{\eps}}: \Hg  \ra  \R $ as follows
$$
f_{\ul{\eps}} : M \mapsto e^{-\Aa(M)}\times \left( \sum_{ \begin{array}{c}\{\g_1,\dots,\g_m\} \\ \text{\tiny independent family of disjoint saddle connections}\end{array}} e^{-(\frac{|\g_1|^2}{\eps_1^2}+\dots+\frac{|\g_m|^2}{\eps_m^2})} \right).
$$
By definition, we have
$$
\int_{\Hgme}\mathfrak{F}_{\ul{\eps}}d\vol=\int_{\Hge}f_{\ul{\eps}}d\vol.
$$
But if $M\in\Hge$, then by definition there exists at least an independent family $\{\g_1,\dots,\g_m\}$ in $M$ such that $|\g_j|^2/\eps_j^2 < \Aa(M), \, j=1,\dots,m$, hence
$$
e^{-(\frac{|\g_1|^2}{\eps_1^2}+\dots+\frac{|\g_m|^2}{\eps_m^2})} > e^{-m\Aa(M)}.
$$
Thus  we have $f_{\ul{\eps}} (.) > e^{-(m+1)\Aa(.)}$ on  $\Hge$.  The inequality \eqref{ineq:sm:SC:vol:1} implies
\begin{equation}\label{ineq:sm:SC:vol:2}
\int_{\Hge}e^{-(m+1)\Aa}d\vol < K \eps_1^2\dots \eps_m^2.
\end{equation}
Recall that the space $\Hg$ can be locally identified with an open of $\R^{2N}, N= 2g+n-1$, and $\vol$  with the Lebesgue measure. By this identification, $\Hgi$ corresponds to the hypersurface $\mathbf{Q}_1$ defined by the equation $\Aa=1$. Let us make the following change of coordinates
$$
\begin{array}{cccl}
 \Phi : & \mathbf{Q}_1\times (0,+\infty) & \ra & \R^{2N}\\
    & (v,t)                           & \mapsto & t\cdot v.
\end{array}
$$
Note that we have $\Phi^*\Aa(v,t)=t^2$. Using the relation $d\vol=d\Aa d\vol_1$ on $\mathbf{Q}_1$, one can easily check that
$$
\Phi^*d\vol=2t^{2N-1}dtd\vol_1.
$$
Therefore,
$$
\int_{\Hge}e^{-(m+1)\Aa}d\vol=\int_{\Hgie}\left(\int_0^{+\infty}2t^{2N-1}e^{-(m+1)t^2}dt\right)d\vol_1= \frac{(N-1)!}{(m+1)^N}\vol_1(\Hgie).
$$
It follows immediately from (\ref{ineq:sm:SC:vol:2}) that
$$
\vol_1(\Hgie)< K\eps_1^2\dots\eps_m^2.
$$
Theorem~\ref{thm:sm:SC:vol} is then proved.
\end{proof}

\subsection{Triangulations and local charts}\label{sect:Triang}
Let $(M,\{\g_1,\dots,\g_m\})$ be a point in $\Hgm$. Let $\Sig$ denote the set of singularities of $M$.
There always exists a geodesic triangulation  $\T$ of $M$ such that $\T^{(0)}=\Sig$, and  $\{\g_1,\dots,\g_m\}$ is included in the $1$-skeleton $\T^{(1)}$ of $\T$.
We will call such a triangulation an {\em admissible triangulation}.
Let $N_1, N_2$ denote the number of edges and  of triangles of $\T$  respectively.  Note that we must have $ N_1=3(2g+n-2) \text{ and } N_2=2(2g+n-2)$.

Recall that $M$ is defined by a holomorphic $1$-form $\omega$ on a Riemann surface $X$. We can associate to each oriented edge $e \in \T^{(1)}$ a complex number $Z(e)=\omega(e)$. Hence, the triangulation $\T$ provides us with a vector $Z\in \C^{N_1}$ whose coordinates satisfy the following condition: if $e_i,e_j,e_k$ are three edges of a triangle of $\T$, then
\begin{equation}\label{TriaEq}
\pm Z(e_i)\pm Z(e_j) \pm Z(e_k)=0.
\end{equation}
The signs of $Z(e_i),Z(e_j),Z(e_k)$ depend on their orientation. We have $N_2$ equations of type (\ref{TriaEq}), each of which corresponds to a triangle in  $\T^{(2)}$.

Let $\ST$ denote the system consisting of those linear equations, and $\VT$ be the subspace of solutions of $\ST$ in $\C^{N_1}$.
Let $Z'$ be a vector in $\VT$. If $Z'$ is close enough to $Z$ then there exists
\begin{itemize}
\item[$\bullet$] an element $(M',\{\g'_1,\dots,\g'_m\}) \in \Hgm$ close to $(M,\{\g_1,\dots,\g_m\})$,

\item[$\bullet$] an admissible triangulation $\T'$ of $M'$ such that $Z'$ is the vector associated to $\T'$,

\item[$\bullet$] a homeomorphism $\varphi: M\rightarrow M'$ which maps $\T$ to $\T'$.
\end{itemize}
\noindent
Therefore, we have a map $\DS{\Psi_\T: \mathcal{U} \rightarrow \Hgm}$, where $\mathcal{U}$ is an open subset of $\VT$ that contains $Z$.

\begin{Proposition} \label{prop:triang:loc:ch}
The subspace $\VT$ of $\C^{N_1}$ is of dimension $2g+n-1$.
For $\mathcal{U}\subset \VT$ small enough, $\Psi_\T$ is continuous and injective, it realizes an homeomorphism from $\mathcal{U}$ onto  its image.
\end{Proposition}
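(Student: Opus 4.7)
The plan is to factor $\Psi_\T$ through the period chart of $\Hg$. Since $\varrho:\Hgm\to\Hg$ has already been noted to be a local homeomorphism, we may fix a neighborhood $\mathcal{V}$ of $M$ in $\Hgm$ on which $\varrho|_{\mathcal{V}}$ is a homeomorphism onto an open subset of $\Hg$. The strategy is (i) to identify $\varrho\circ\Psi_\T$ with a $\C$-linear isomorphism post-composed with the inverse of a period chart $\Phi$ of $\Hg$, and then (ii) to shrink $\mathcal{U}$ so that $\Psi_\T(\mathcal{U})\subset\mathcal{V}$, at which point the equality $\Psi_\T=(\varrho|_{\mathcal{V}})^{-1}\circ(\varrho\circ\Psi_\T)$ exhibits $\Psi_\T$ as a composition of homeomorphisms and yields all three conclusions at once.

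The crucial step is the identification of $\VT$ with the relative cohomology $H^1(X,\Sigma;\C)$, where $\Sigma=\{p_1,\dots,p_n\}$. Indeed, the system $\ST$ of equations (\ref{TriaEq}) is precisely the cellular $1$-cocycle condition on $\T$; because the vertex set $\T^{(0)}$ coincides with $\Sigma$, the relative cellular cochain complex has $C^0(\T,\Sigma)=0$, so
\[
\VT=Z^1(\T;\C)=H^1(\T,\Sigma;\C)\cong H^1(X,\Sigma;\C).
\]
In particular $\dim_\C\VT=2g+n-1$. Using the triangle relations to eliminate one edge per triangle along a spanning tree of the dual graph, one can select among the oriented edges of $\T$ a $\Z$-basis $\{c_1,\dots,c_{2g+n-1}\}$ of $H_1(X,\Sigma;\Z)$. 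The associated evaluation map
\[
\pi:\VT\longrightarrow\C^{2g+n-1},\qquad Z'\longmapsto\bigl(Z'(c_1),\dots,Z'(c_{2g+n-1})\bigr),
\]
is then the duality pairing with this basis, hence a $\C$-linear isomorphism.

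Let $\Phi$ denote the period chart of $\Hg$ at $(X,\omega)$ defined by the basis $\{c_i\}$. For $Z'\in\mathcal{U}$, Lemma~\ref{lm:triang:loc:ch} produces a homeomorphism $\varphi:X\to X'$ sending each oriented edge $e$ of $\T$ to an edge $\varphi(e)$ of an admissible triangulation of $(X',\omega')=\varrho(\Psi_\T(Z'))$ with $\int_{\varphi(e)}\omega'=Z'(e)$. Therefore $\Phi\circ\varrho\circ\Psi_\T=\pi|_{\VT}$. After shrinking $\mathcal{U}$ so that both $\pi(\mathcal{U})\subset\Phi(\varrho(\mathcal{V}))$ and $\Psi_\T(\mathcal{U})\subset\mathcal{V}$ (the latter being ensured because small $Z'-Z$ yields small deformations of the triangles, hence of the surface together with its marked edges), one obtains
\[
\Psi_\T|_{\mathcal{U}}=(\varrho|_{\mathcal{V}})^{-1}\circ\Phi^{-1}\circ\pi|_{\VT},
\]
a composition of three homeomorphisms onto their images. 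The main obstacle is the cohomological identification $\VT\cong H^1(X,\Sigma;\C)$; once this is in hand, the isomorphism $\pi$ is tautological and the rest of the argument reduces to a routine comparison of charts.
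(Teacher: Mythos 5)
Your proof is correct and its overall architecture coincides with the paper's: both conclude by composing $\Psi_\T$ with a period chart $\Phi$ of $\Hg$ and observing that $\Phi\circ\Psi_\T$ is the restriction of a $\C$-linear isomorphism $\VT\to\C^{2g+n-1}$ given by evaluation on $2g+n-1$ selected edges. Where you genuinely diverge is in how this isomorphism is obtained. The paper argues by hand: since each edge lies in exactly two triangles with opposite signs, the sum of the equations of $\ST$ is trivial, whence $\dim_\C\VT\geq N_1-(N_2-1)=2g+n-1$; and the ``disk equations'' (\ref{DiskEq}), obtained by summing triangle relations over an embedded disk, express every remaining coordinate in terms of a cut system $\mathcal{B}$ of $2g+n-1$ edges, giving the reverse inequality and the isomorphism $\mathrm{F}_\T$. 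You instead note that $\ST$ is precisely the relative cellular cocycle condition and that $C^0(\T,\Sigma)=0$ because every vertex of $\T$ is a singularity, so $\VT=Z^1=H^1(X,\Sigma;\C)$; the dimension and the invertibility of the evaluation map on a basis of $H_1(X,\Sigma;\Z)$ then follow at once from the universal-coefficient pairing. Your route is shorter and more conceptual; the paper's explicit elimination is not wasted, however, since the same disk relations reappear later when the auxiliary periods $f_k$ are written as linear functions of the primary coordinates. (One small imprecision in your basis selection: the elimination along a spanning tree of the dual graph removes $N_2-1$ edges, one per tree edge rather than one per triangle, which is exactly what makes the count $N_1-(N_2-1)=2g+n-1$ come out right.)
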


\subsection{Special triangulation}\label{sect:spec:triang}
\subsubsection{Construction}
Our goal in this section is to specify an open subset of $\VT$ on which the map $\Psi_\T$ is one-to-one. For this purpose, we first construct for each surface in an open dense subset (of full measure) of $\Hgm$  a particular triangulation which will be called {\em special triangulation}.

Recall that a geodesic ray emanating from  any point  in  $\Sig$ is called a {\em separatrix}.
Let $\Hgms$ denote the set of $(M,\{\g_1,\dots,\g_m\}) \in \Hgm$ satisfying
\begin{itemize}
 \item[(a)] none of the saddle connections $\{\g_1,\dots,\g_m\}$ is vertical,
 \item[(b)] every vertical separatrix  intersects the interior of one of the saddle connections $\{\g_1,\dots,\g_m\}$.
\end{itemize}

\begin{Lemma}\label{lm:dense:subset}
 $\Hgms$ is an open dense subset of full measure of $\Hgm$.
\end{Lemma}
\begin{proof}
It is not difficult to see that both  (a) and (b) are open conditions, thus $\Hgms$ is an open in $\Hgm$.

By a classical result (see for example \cite{KerMasSmi86}), the set of surfaces $M$ such that the vertical flow is minimal is of full measure in $\Hg$. On such a surface, every vertical separatrix  must intersect all saddle connections, since it is dense and not parallel to any saddle connection. Therefore, if $\{\g_1,\dots,\g_m\}$ is an independent  family of disjoint saddle connections on $M$, then $(M, \{\g_1,\dots,\g_m\}) \in \Hgms$. Since $\Hgm$ can be locally  identified with $\Hg$, it follows that $\Hgms$ is of full measure in $\Hgm$. Since $\vol$ is a volume form, a full measure subset of $\Hgm$ must be dense.
\end{proof}


Suppose from now on that $(M,\{\g_1,\dots,\g_m\})$ belongs to $\Hgms$.
We truncate all the vertical separatrices of $M$  at their first intersection with the set $\sqcup_{i=1}^m  \inter(\g_i)$.
Pick a saddle connection $\g$ in the family $\{\g_1,\dots,\g_m\}$. Let $A$ and $B$ denote the left and right endpoints of $\g$ respectively.  
We denote  the downward vertical separatrices that reach  $\inter(\g)$ by $\eta^+_1,\dots,\eta^+_{k}$  from the left to the right. 
The endpoints of $\eta^+_j$ will be denoted by $P_j$ and $Q_j$, where $P_j\in \Sig$ and $Q_j\in \inter(\g)$.  
Note that we may have $P_j=P_{j'}$ even if $j'\neq j$.

We will find a family of embedded triangles in $M$ with disjoint interior whose union contains all the segments $\eta^+_j$ by the following algorithm: 
let $j_0\in \{1,\dots,k\}$ be the smallest index such that $|\eta^+_{j_0}|=\min\{|\eta^+_{j}|, \; j=1,\dots,k\}$.
Using the developing map, we can realize  $\g$  and $\eta^+_{j_0}$ as two segments $\tilde{A}\tilde{B}$ and $\tilde{P}_{j_0}\tilde{Q}_{j_0}$  respectively in the plane $\R^2$, such that $\tilde{Q}_{j_0}\in \tilde{A}\tilde{B}$, and $\tilde{P}_{j_0}\tilde{Q}_{j_0}$ is vertical. Let $\tilde{\Delta}$ denote the triangle in $\R^2$ with vertices $\tilde{A},\tilde{B}, \tilde{P}_{j_0}$.
From the construction, the following claim is straightforward
\begin{Claim}
There exists a locally isometric map $\varphi: \tilde{\Delta} \ra M$ which satisfies:
\begin{itemize}
\item $\varphi(\tilde{A})=A, \varphi(\tilde{B})=B, \varphi(\tilde{P}_{j_0})=P_{j_0}$,
\item $\varphi(\inter(\tilde{\Delta}))$ is disjoint from the family $\{\g_1,\dots,\g_m\}$,
\item  the restriction of $\varphi$ to $\inter(\tilde{\Delta})$ is an embedding.
\end{itemize}
\end{Claim}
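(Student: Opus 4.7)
The plan is to subdivide $\tilde\Delta$ along the vertical segment $\tilde P_{j_0}\tilde Q_{j_0}$ into two sub-triangles $\tilde\Delta_L = \tilde A\tilde Q_{j_0}\tilde P_{j_0}$ and $\tilde\Delta_R = \tilde Q_{j_0}\tilde B\tilde P_{j_0}$, then to construct $\varphi$ separately on each by sweeping out its vertical foliation. I describe the argument for $\tilde\Delta_L$; the argument for $\tilde\Delta_R$ is symmetric, and the two maps glue along the common edge since each sends it isometrically onto $\eta^+_{j_0}$ with matching endpoints.

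For each $s \in [0, |AQ_{j_0}|]$ let $\tilde R_s$ be the point of $[\tilde A, \tilde Q_{j_0}]$ at horizontal distance $s$ from $\tilde A$, and let $\tilde V_s$ be the vertical segment in $\tilde\Delta_L$ from $\tilde R_s$ up to the hypotenuse $[\tilde A, \tilde P_{j_0}]$. By elementary geometry $|V_s| < |\eta^+_{j_0}|$ for every $s < |AQ_{j_0}|$, with equality only at $s=|AQ_{j_0}|$, where $\tilde V_s = \tilde Q_{j_0}\tilde P_{j_0}$ develops to $\eta^+_{j_0}$ by construction. Let $T\subset [0,|AQ_{j_0}|]$ be the set of parameters $s$ for which the vertical ray from $R_s\in\g$ in direction $\xi$ of length $|V_s|$ develops into $M$ as a segment whose interior meets neither the singular set nor any $\g_i$. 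The endpoint $|AQ_{j_0}|$ lies in $T$ by property (b) of $\Hgms$ applied to the separatrix $\eta^+_{j_0}$.

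The key step is to show $T$ is both open and closed. Openness follows from the compactness of each developed leaf and property (a), which makes transverse avoidance of the non-vertical $\g_i$ an open condition. For closedness, take $s_n\to s^*$ with $s_n\in T$; by continuity $V_{s^*}$ develops, and any intersection of $\inter(V_{s^*})$ with $\inter(\g_i)$ would be transverse and hence persist for $n$ large, contradicting $s_n\in T$. The only remaining possibility is a singularity $P$ sitting on $V_{s^*}$ at some height $h\in(0,|V_{s^*}|]$. Reversing orientation, $P$ emits a vertical separatrix in direction $-\xi$ of length $h$ reaching $R_{s^*}\in\inter(\g)$, and by the same limiting argument this separatrix does not meet any $\inter(\g_i)$ before arriving. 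By property (b) it is therefore an upper ray at $\g$, hence appears in $\{\eta^+_1,\dots,\eta^+_k\}$; but $h\le |V_{s^*}| < |\eta^+_{j_0}|$ contradicts the minimality defining $j_0$. Hence $s^*\in T$, and $T=[0,|AQ_{j_0}|]$. The leaves $\varphi(V_s)$ then assemble into the desired locally isometric map $\varphi_L:\tilde\Delta_L\to M$ (the degenerate leaf at $s=0$ being a single point mapped to $A$). The main obstacle is precisely this limiting argument; everything hinges on the strict inequality $|V_{s^*}|<|\eta^+_{j_0}|$ provided by the choice of $j_0$ as a minimizer.

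Finally, the stated properties of $\varphi$ follow easily. The boundary values at $\tilde A, \tilde B, \tilde P_{j_0}$ and the disjointness $\varphi(\inter(\tilde\Delta))\cap \bigcup_i \g_i = \vide$ are built into the construction. For injectivity on $\inter(\tilde\Delta)$, $\varphi$ is isometric on each vertical leaf, and two distinct leaves $V_{s_1},V_{s_2}$ cannot share an image point $x$: were they to do so, the unique vertical ray from $x$ in direction $-\xi$ remaining on the upper side of $\g$ would reach both $R_{s_1}$ and $R_{s_2}$ in $\g$, which is impossible since $R_{s_1}\neq R_{s_2}$.
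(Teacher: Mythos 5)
Your foliation-by-vertical-leaves strategy is a legitimate alternative to the paper's argument (which develops the whole flow-box $L=\cup_{0\le t\le |\eta^+_{j_0}|}\psi_t(\g)$ onto a polygon $\tilde L\subset \mathrm{P}$ and deduces $\tilde\Delta\subset\tilde L$ from the convexity of $\tilde L$), and both your closedness step and your injectivity argument via the vertical foliation are sound; in particular, converting a singularity on the limit leaf into a short upper ray contradicting the minimality of $|\eta^+_{j_0}|$ is exactly the right use of the hypothesis. The gap is in the openness of $T$. Membership of $s$ in $T$ constrains only $\inter(V_s)$; it says nothing about the top endpoint of $V_s$, which lies on the hypotenuse $[\tilde A,\tilde P_{j_0}]$. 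If that endpoint develops onto a point $p\in\inter(\g_i)$ for some $i$, with $\g_i$ crossing the hypotenuse transversally at $p$, then $s\in T$, yet on one side of $s$ the segment $\g_i$ lies strictly below the hypotenuse, so the nearby leaves $V_{s'}$ on that side meet $\inter(\g_i)$ in their interiors and $s'\notin T$. So ``compactness of the developed leaf plus transversality'' does not yield openness: the \emph{closed} leaf is not known to avoid the obstruction set, only the open leaf is. (You do rule out a singular top endpoint via minimality; you do not rule out a top endpoint on $\inter(\g_i)$.)

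This is not a removable technicality: the configuration to be excluded is precisely a marked saddle connection entering $\inter(\tilde\Delta)$ through a hypotenuse, which is a substantial part of what the Claim asserts, so the argument is circular exactly where it matters. To close the gap you need an additional step showing this cannot happen --- for instance, follow such a $\g_i$ into the region between the hypotenuse and $\g$: since $\inter(\g_i)\cap\inter(\g)=\vide$ and $\inter(\g)$ contains no singularities, $\g_i$ cannot leave through the base, so it must either terminate at a singularity whose downward vertical separatrix reaches $\inter(\g)$ unobstructed in time less than $|\eta^+_{j_0}|$ (contradicting minimality, as in your closedness step), or cross the vertical segment $\tilde Q_{j_0}\tilde P_{j_0}$, i.e.\ meet $\inter(\eta^+_{j_0})$ (contradicting the truncation of $\eta^+_{j_0}$ at its \emph{first} intersection with $\sqcup_i\inter(\g_i)$), or be shadowed from below by yet another $\g_j$, which must then be treated recursively. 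This bookkeeping is what the paper's convexity assertion for $\tilde L$ encapsulates; until it is supplied, $T$ is not shown to be open and the connectedness argument does not conclude.
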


Let $\Delta$ be the image of $\tilde{\Delta}$ in $M$. Let $\g'$ and $\g''$ denote the saddle connections corresponding to the sides $AP_{j_0}$ and $BP_{j_0}$ of $\Delta$.
By construction,  $\eta^+_{j_0}$ is contained in the triangle $\Delta$, and all the other segments in the family $\{\eta^+_j, \, j\neq j_0\}$ intersect either $\g'$ or $\g''$. We can now apply the same arguments to $\g'$ and $\g''$ and continue the procedure until we get a family of embedded triangles that covers the union $\cup_{j=1}^k\eta^+_j$.

\begin{Remark}\label{rem:tri:proj}
If $\delta$ is a side of one of the triangles constructed above, and $\delta$ is not contained in the family $\{\g_1,\dots,\g_m\}$, then $\delta$ is one side of an embedded vertical  trapezoid ${\bf P}$ in $X$, whose vertical sides corresponds to two vertical segments $\eta^+_i, \eta^+_{j}$, and the opposite side of $\delta$ in ${\bf P}$ is a subsegment $Q_iQ_{j}$ of $\g$. We will call $Q_iQ_{j}$  the {\em projection} of $\delta$ on $\g$. Let  $\delta'$ be another side of one of the triangles we obtained, and $Q_{i'}Q_{j'}$ be its projection on $\g$. By construction, if $Q_iQ_j$ and  $Q_{i'}Q_{j'}$ intersect  then either $Q_iQ_j \subset Q_{i'}Q_{j'}$, or $Q_{i'}Q_{j'} \subset Q_iQ_j$.
\end{Remark}

By a symmetric procedure, we can find a family of embedded triangles with vertices in $\Sig$ and disjoint interiors that covers all the upward vertical separatrices that intersect $\inter(\g)$.


Applying this construction to all of the saddle connections in the family $\{\g_1,\dots,\g_m\}$, we get a collection of embedded triangles $\{\Delta_\alpha, \; \alpha \in I\}$ in $M$ with vertices in $\Sig$.

\begin{Claim}\label{clm:sp:triang}
 The family $\{\Delta_\alpha, \alpha \in I\}$ is a triangulation of $M$.
\end{Claim}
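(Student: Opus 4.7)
The plan is to verify that the family $\{\Delta_\alpha\}_{\alpha\in I}$ triangulates $X$ by establishing three properties: (i) the triangles have pairwise disjoint interiors, (ii) their union equals $X$, and (iii) any two triangles meeting in more than a vertex share a full common edge. The key input is condition (b) of $\Hgms$, which controls how the vertical flow interacts with the family $\{\gamma_1,\dots,\gamma_m\}$.

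First I would set up an auxiliary decomposition of $X$. Condition (b) of $\Hgms$ ensures that every vertical ray in direction $\pm\xi$ starting at a singularity hits $\bigcup_i \inter(\gamma_i)$ in finite time; truncate each such ray at its first intersection and let $\mathcal{S}$ denote the resulting finite family of vertical segments. A flow-box argument combined with compactness of $X$ extends condition (b) from separatrices to all vertical trajectories: every vertical trajectory on $X$ meets $\bigcup_i\gamma_i$ in finite time in both directions. Cutting $X$ along $\bigcup_i\gamma_i\cup\mathcal{S}$ then yields finitely many open components, each of which is a vertical trapezoid with bottom and top edges on some $\gamma_i,\gamma_j$ and left/right sides in $\mathcal{S}$.

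For property (ii), given $p\in X\setminus\bigcup_i\gamma_i$, the forward vertical trajectory from $p$ meets some $\inter(\gamma_j)$ at a first point $q$, so $\overline{pq}$ lies in the closure of the union of lower rays at $\gamma_j$. By induction on the number of lower rays at $\gamma_j$, using Remark~\ref{rem:tri:proj} to control the nested family of projections on $\gamma_j$, the lower-ray triangles at $\gamma_j$ exhaust the closure of this region, so $p\in\Delta_\alpha$ for some $\alpha$. For property (i), disjointness within a single $\gamma_i$-construction is immediate from the recursion: after extracting the triangle corresponding to the shortest upper ray $\eta^+_{j_0}$, the remaining upper rays split into two groups separated by $\eta^+_{j_0}$, giving two sub-recursions on disjoint subregions bounded by the new saddle connections $\gamma'=AP_{j_0}$ and $\gamma''=BP_{j_0}$. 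Disjointness across different $\gamma_i$-constructions, together with the edge-matching required in (iii), reduces to showing that whenever two recursions (one at $\gamma_i$, one at $\gamma_j$) produce auxiliary edges in overlapping territory, the ``shortest-ray'' choice forces the two constructions to produce the same edges, so adjacent triangles share full sides.

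The main obstacle I anticipate is precisely this last point: rigorously matching auxiliary saddle connections coming from independent recursions at different $\gamma_i$'s. The essential tool is Remark~\ref{rem:tri:proj}---the projected intervals on a given $\gamma_i$ form a nested family---which, combined with the canonical shortest-ray rule at each recursion step and induction on the combinatorial complexity of the configuration inside each trapezoid, should force the two families of auxiliary edges to coincide rather than cross. This is more a matter of careful combinatorial bookkeeping than of a conceptual difficulty, but it is the technical heart of the proof.
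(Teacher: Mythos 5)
There is a genuine gap in your surjectivity argument. You claim that if the forward vertical trajectory of $p$ first meets $\inter(\g_j)$ at a point $q$, then $\overline{pq}$, and hence $p$, lies in the union of the lower-ray triangles constructed at $\g_j$. This is false. The recursion at $\g_j$ only produces triangles covering the finitely many truncated separatrices $\eta^-_1,\dots,\eta^-_\ell$ that reach $\inter(\g_j)$ from below; it does not exhaust the backward-flow basin of $\inter(\g_j)$. Indeed, after the first step produces the triangle with apex $P_{j_0}$ and base $\g_j=AB$, the recursion continues only on those sides $AP_{j_0}$, $BP_{j_0}$ that are still crossed by remaining lower rays, and it terminates at sides $\delta$ crossed by none. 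Since $\inter(\delta)$ is disjoint from $\sqcup_i\inter(\g_i)$, the backward flow from $\inter(\delta)$ runs for positive time, so there is a nonempty open region below $\delta$ whose points still flow forward into $\inter(\g_j)$ but lie in no lower-ray triangle at $\g_j$ (in the extreme case $\ell=1$ the recursion stops after a single triangle while the basin does not). Such points are covered, if at all, by triangles attached to \emph{other} saddle connections $\g_k$, and identifying which one is essentially the original covering problem again. The paper avoids any pointwise identification and argues by contradiction: if $X'=\ol{X\setminus\cup_\alpha\Delta_\alpha}$ were nonempty it would be a flat surface with geodesic boundary and corners at singularities of $X$; triangulating it and observing that from some vertex of any triangle the vertical ray enters that triangle's interior, one finds an initial segment of a separatrix inside $\inter(X')$, contradicting the fact that every truncated separatrix is covered by the $\Delta_\alpha$. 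You need an argument of this global type. A secondary issue: your extension of condition (b) from separatrices to all vertical trajectories is not a consequence of ``flow boxes plus compactness''; it requires excluding vertical cylinders and minimal components disjoint from $\cup_i\g_i$, which one does by noting that condition (b) forbids vertical saddle connections. (The paper's complementary-region argument makes this extension unnecessary.)

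On disjointness your plan coincides in substance with the paper's: both rest on Remark~\ref{rem:tri:proj}, namely that every non-marked side $\delta$ of a constructed triangle bounds an embedded vertical trapezoid whose opposite side lies on some $\g_k$, with $\delta$ and the vertical sides of the trapezoid disjoint from $\sqcup_i\inter(\g_i)$ and with the endpoints of $\delta$ singular; from these properties one derives a contradiction if two sides had crossing interiors. However, you explicitly defer this verification to unspecified ``bookkeeping,'' and your stated target, that two recursions must produce the \emph{same} auxiliary edges in overlapping territory, is not the right statement: what one shows is that two such sides cannot cross, and the edge-matching needed for a genuine triangulation is then automatic because the interior of each side is a saddle connection and therefore contains no singularities, hence no endpoint of any other side.
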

\begin{proof}
We first show that if $\Delta_{\alpha_1}$ and $\Delta_{\alpha_2}$ are two triangles in this family then $\inter(\Delta_{\alpha_1})\cap \inter(\Delta_{\alpha_2})= \varnothing$.
Assume that  $\inter(\Delta_{\alpha_1})\cap \inter(\Delta_{\alpha_2})\neq \varnothing$.
Since a triangle in the family $\{\Delta_\alpha, \alpha \in I\}$ cannot be included in the other one, there must exist a side $\delta_1$ of $\Delta_{\alpha_1}$, and a side $\delta_2$ of $\Delta_{\alpha_2}$ such that $\inter(\delta_1)\cap \inter(\delta_2)\neq \varnothing$.
By construction (see Remark~\ref{rem:tri:proj}) $\delta_i$ is one side of an embedded vertical trapezoid ${\bf P}_i$.
Since the endpoints of $\delta_i$ are singularities of $M$,  they  cannot be contained in $\inter({\bf P}_1)\cup \inter({\bf P}_2)$.
Recall that the opposite side of $\delta_i$ in ${\bf P}_i$ is a subsegment of a saddle connection in the family $\{\g_1,\dots,\g_m\}$, and $\delta_i$ and the vertical sides of ${\bf P}_i$ do not intersect the set $\sqcup_{1\leq i \leq m} \inter(\g_i)$.
Using these properties, one can easily check that if $\inter(\delta_1)\cap \inter(\delta_2)\neq \varnothing$ then we have a contradiction.

It remains to show that the union of all these triangles is $M$. Let $M'$ be the closure in $M$ of the set $M\setminus(\cup_{\alpha\in I}\Delta_\alpha)$. If $M'\neq \varnothing$, then $M'$ is a flat surface with piecewise geodesic boundary. Let $\Sig'$ be the finite subset of $M'$  arising from $\Sig$.
We can triangulate $M'$ by geodesic segments with endpoints in $\Sig'$.
 Let $\Delta$ be a triangle  in this triangulation. Consider the vertical rays starting from the vertices of $\Delta$. Observe  that one of those rays must intersects $\inter(\Delta)$.  This intersection is contained in  a vertical separatrix. But by construction, such a  subsegment must be contained in the union of $\{\Delta_\alpha,\; \alpha \in I\}$. Therefore we have a contradiction which proves the claim.
\end{proof}

We will call the triangulation constructed above the {\em special triangulation} of $(M,\{\g_1,\dots,\g_m\})$. By construction, this triangulation is unique. Note that such  a triangulation only exists for surfaces in $\Hgms$.

\subsection{Characterizing special triangulation}
 Let $\G$ be the dual graph of the special triangulation  $\T$ of $(M, \{\g_1,\dots,\g_m\})$, the vertices of $\G$ are the triangles in $\T^{(2)}$, and the (geometric) edges of $\G$ are the edges in $\T^{(1)}$. Remark that $\G$ is a trivalent graph, with $N_2$ vertices and $N_1$ edges. At each vertex of $\G$, we have a cyclic ordering on the set of edges incident to this vertex which is induced by the orientation of the surface.

For any $\g_i$, the union of the triangles that covers the vertical separatrices that intersect $\g_i$ from the upper side (resp. lower side) is dual to a tree in $\G$. We denote this tree by $\G_{i,+}$ (resp. $\G_{i,-}$), and choose its {\em root} to be the vertex dual to the triangle containing $\g_i$. By construction, all the trees $\G_{i,\pm}$ are disjoint, and any vertex of $\G$ belongs to one of those trees.  Note that if no vertical separatrix reaches $\inter(\g_i)$ from  the upper side (resp. from  the lower side), then the tree $\G_{i,+}$ (resp. $\G_{i,-}$) is empty.

Observe that the family of graphs $(\G,\{\G_{i,\veps}\})$ satisfies
\begin{itemize}
\item[a)] any vertex of $\G$ belongs to one of the trees $\G_{i,\veps}$,

\item[b)] for each $i\in \{1,\dots,m\}$ the roots of $\G_{i,+}$ and $\G_{i,-}$ are connected by an edge of $\G$.
\end{itemize}

\begin{Definition}\label{def:adm:fam:graphs}
We will call a trivalent graph with $N_1$ vertices and $N_2$ edges  equipped with a cyclic ordering on the set of edges incident to each vertex, together with $m$ subgraphs that are trees satisfying the conditions above an {\em admissible family of graphs}.
\end{Definition}

We number the edges of $\G$ in such a way that
\begin{itemize}
 \item[(i)] the first $m$ edges are the duals of $\{\g_1,\dots,\g_m\}$,

 \item[(ii)] if $e_{i_1}$ and $e_{i_2}$ belong to one of the trees $\G_{i,\veps}$, and $e_{i_1}$ is contained in the path from $e_{i_2}$ to the root, then  $i_1< i_2$,

 \item[(iii)] if $e_{i_1}$ belongs to one of the tree $\G_{i,\veps}$, and $e_{i_2}\not\in\{e_1,\dots,e_m\}$ does not belong to any tree, then $i_1 < i_2$.
\end{itemize}
We will call a numbering of the edges of $\G$ satisfying the conditions above a {\em compatible numbering} with respect to the trees $\{\G_{i,\pm}\}$.


Consider now the vector $Z=(z_1,\dots,z_{N_1})\in \C^{N_1}$, where $z_i=x_i+\imath y_i$ is the complex number  associated to $e_i$. 
By construction, we have $Z\in \VT$, the space of solutions of the system $\ST$.
Furthermore, by choosing a suitable orientation for the edges of $\T$, we can assume that
\begin{equation}\label{eq:real:part:pos}
 x_i:=\re(z_i) >0, \, i=1,\dots,N_1.
\end{equation}

We wish now to  describe the other properties of $Z$. Consider a triangle $\Delta$ in $\T^{(2)}$. Let $(e_{i_1},e_{i_2},e_{i_3})$ be the sides of $\Delta$  written in the cyclic order induced by the orientation of $M$, where $i_1=\min\{i_1,i_2,i_3\}$. Let $\G_\alpha$ be the tree  in the family $\{\G_{i,\pm}\}$ that contains the vertex of $\G$  dual to $\Delta$.

\begin{itemize}
\item[(a)] If $\Delta$ is the root of $\G_\alpha$, then $e_{i_1}\in \{e_1,\dots,e_m\}$. Otherwise $e_{i_1}$ is the unique edge of $\G_\alpha$ that is included in the path from $\Delta$ to the root. By construction, in both cases we always have
\begin{equation}\label{Ineq:hor:leng}
0<x_{i_2}<x_{i_1} \text{ and } 0<x_{i_3}<x_{i_1}.
\end{equation}
We will call $e_{i_1}$ the {\em base } of $\Delta$.

\item[(b)] We have $ \Aa(\Delta)=\frac{1}{2}\im(\bar{z}_{i_1}z_{i_2})$. Thus, $Z$ must satisfy
\begin{equation}\label{Ineq:Tri:Aa}
 \im(\bar{z}_{i_1}z_{i_2})=x_{i_1}y_{i_2}-x_{i_2}y_{i_1} >0.
\end{equation}

\item[(c)] Removing $e_{i_1}$ from the tree $\G_\alpha$, we get two subtrees $\G'_\alpha$ and $\G''_\alpha$, where $\G'_\alpha$ contains $\Delta$.
The union of the triangles of $\T$ dual to the vertices of $\G'_\alpha$ can be identified with a polygon $\mathbf{P}$ in $\R^2$ which has a side corresponding to $e_{i_1}$.
Without loss of generality, we can assume that $e_{i_1}$ is the lower side of $\mathbf{P}$.
The triangle $\Delta$ is identified with a triangle  in $\mathbf{P}$ whose vertices are denoted by $A,B,C$,  where $A$ and $B$ are the left  and right endpoints of $e_{i_1}$ respectively. By the convention on the cyclic ordering, we have $e_{i_2}=BC, e_{i_3}=CA$.

If $P$ is a vertex of $\mathbf{P}$ different from $A$ and $B$, we denote by $\hat{P}$ the intersection of the vertical (downward) ray from  $P$ with $\inter(AB)$, and by $h(P)$ the length of the segment $P\hat{P}$.
We will call $\hat{P}$ the projection of $P$ to $AB$. We denote the vertices of $\mathbf{P}$ whose projection to $AB$ is between $A$ and $\hat{C}$  by $D_1,\dots,D_r$,  and the vertices whose projection to $AB$ is between $\hat{C}$ and $B$  by $E_1,\dots,E_s$ from the left to the right.
By construction, we have
$$
h(C)<h(D_i), \; \forall i=1,\dots,r, \quad \text{ and } \quad h(C)\leq h(E_j), \; j=1,\dots,s.
$$
Note that the condition $h(C)<h(D_i)$ is equivalent to $ \overrightarrow{D_iC} \wedge \overrightarrow{AB} >0$.
As $\overrightarrow{D_iC} =\overrightarrow{D_iD_{i+1}} +\dots+\overrightarrow{D_rC}$, and the sides of $\mathbf{P}$ are the edges of $\T$, we can write $\overrightarrow{D_iC}$ as a linear function $f_i$ of $Z$. Consequently, the condition on $h(D_i)$ becomes
\begin{equation}\label{ineq:tree:cond:l}
 \im(\bar{z}_{i_1} f_i) <0, \; \forall i=1,\dots,r.
\end{equation}
\noindent Similarly, for $E_j, \; j=1,\dots,s$, we have $h(C) \leq h(E_j)$, and these conditions are equivalent two
\begin{equation}\label{ineq:tree:cond:r}
 \im(\bar{z}_{i_1}g_j) \geq 0, \; \forall j=1,\dots,s.
\end{equation}
\noindent where $g_j$ are some linear functions of $Z$. The functions $f_i,g_j$ are completely determined by the tree $\G'_\alpha$. Note also that, {\em a priori}, we cannot replace the inequality in (\ref{ineq:tree:cond:r}) by a strict one.
\end{itemize}

 Let $\Dc$ be the subset of $\VT$ which is defined by the inequations \eqref{eq:real:part:pos}, \eqref{Ineq:hor:leng}, \eqref{Ineq:Tri:Aa}, \eqref{ineq:tree:cond:l}, \eqref{ineq:tree:cond:r}. For every $Z=(z_1,\dots,z_{N_1})\in\Dc$, we define $\Psi(Z)$ to be the pair $(M,\{\g_1,\dots,\g_m\})$, where
\begin{itemize}
 \item[$\bullet$] $M$ is the  translation surface obtained by gluing the triangles constructed from the coordinates of $Z$ (using the graph $\G$).
  \item[$\bullet$] $\g_i$ is the saddle connection in $M$ corresponding to the edge of $\G$ which connects the roots of $\G_{i,+}$, and $\G_{i,-}$.
\end{itemize}
As a direction consequence of the construction of $\Psi$, we get
\begin{Proposition}\label{prop:psi:embed}
Let $\Dc_0$ be a component of  $\Dc$. If $\Psi(Z) \in \Hgm$ for some $Z\in \Dc_0$, then $\Psi(\Dc_0)\subset \Hgms$, and $\Psi$ realizes a homeomorphism from $\Dc_0$ onto its image. 
\end{Proposition}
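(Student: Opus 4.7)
The plan is to exploit the fact that the inequalities cutting out $\Dc$ encode exactly the geometric properties characterizing the special triangulation constructed in Section~\ref{sect:spec:triang}, and then to combine uniqueness of the special triangulation with the local homeomorphism property already established in Proposition~\ref{prop:triang:loc:ch}.

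First, I would show that $\Psi(\Dc_0) \subset \Hgm$. On the whole component $\Dc_0$, the gluing is governed by the fixed admissible family $(\G,\{\G_{i,\pm}\})$, so the topological type of the constructed surface (genus, number and orders of singularities) and the homology classes $[\g_i] \in H_1(X,\Sigma;\Z)$, expressed in terms of the cellular chain complex of the triangulation, depend only on the combinatorics and are therefore constant along $\Dc_0$. In particular, linear independence of $\{[\g_1],\dots,[\g_m]\}$ at the one distinguished $Z \in \Dc_0$ propagates to every point, so $\Psi(Z) \in \Hgm$ throughout $\Dc_0$.

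For injectivity, the key claim is that for any $Z \in \Dc_0$ the triangulation of $\Psi(Z)$ coming from the gluing is precisely the special triangulation of $\Psi(Z)$. Indeed, conditions (A), (B), (C) of Section~\ref{sect:spec:triang} translate exactly into the inequalities (\ref{Ineq:hor:leng}), (\ref{Ineq:Tri:Aa}), (\ref{ineq:tree:cond:l}), (\ref{ineq:tree:cond:r}) that define $\Dc$; conversely, these conditions, together with the specification of the roots of the trees $\G_{i,\pm}$ encoded in $\G$, characterize the special triangulation uniquely. Hence if $\Psi(Z)=\Psi(Z')$, both $Z$ and $Z'$ recover the same labeled triangulation of the common surface $(X,\omega,\{\g_1,\dots,\g_m\})$, and the coordinates $z_i = \int_{e_i}\omega$ are thereby determined, giving $Z = Z'$. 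Continuity of $\Psi$ is immediate from the construction, and for the continuity of the inverse I would invoke Proposition~\ref{prop:triang:loc:ch}: near any $Z \in \Dc_0$, the map $\Psi$ agrees with $\Psi_\T$ attached to the special triangulation $\T$ of $\Psi(Z)$, which is a local homeomorphism from an open subset of $\VT = \V$ onto a neighborhood of $\Psi(Z)$ in $\Hgm$. Combined with injectivity, this gives the homeomorphism onto the image.

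The main obstacle will be the first half of the injectivity step, namely a careful verification that on $\Dc_0$ the gluing actually produces the \emph{special} triangulation, and not merely some admissible triangulation. One must retrace the recursive procedure of Section~\ref{sect:spec:triang}, checking that the minimality encoded by (\ref{ineq:tree:cond:l})--(\ref{ineq:tree:cond:r}) forces the vertical separatrices to interact with the triangles of the constructed triangulation exactly as dictated by the tree structure $\{\G_{i,\pm}\}$, so that the same triangles would be picked out by the construction of Section~\ref{sect:spec:triang} applied to $\Psi(Z)$. Once this is secured, uniqueness of the special triangulation delivers injectivity and the proposition follows.
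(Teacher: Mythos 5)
Your proposal follows essentially the same route as the paper: constancy of the topological invariants on the component gives $\Psi(\Dc_0)\subset\Hgm$, the inequalities (\ref{Ineq:hor:leng})--(\ref{ineq:tree:cond:r}) force the glued triangulation to be the special triangulation so that uniqueness of the latter yields injectivity, and the local homeomorphism property comes from the triangulation charts of Proposition~\ref{prop:triang:loc:ch}. The step you flag as the main obstacle (that the gluing really produces the special triangulation) is exactly the point the paper asserts with the same justification, so your argument is correct and matches the paper's.
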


Since the set of admissible families of graphs is clearly finite, we get
\begin{Corollary}\label{cor:good:chart:finite}
$\Hgms$ is covered by a finite family of  subsets of the form $\Psi(\Dc_0)$. 
\end{Corollary}

\subsection{Primary and auxiliary families of indices}
Let $(\G,\{\G_{i,\pm}\})$ be an admissible family of graphs, where $\G$ is the dual graph of a special triangulation $\T$ of some element $(M,\{\g_1,\dots,\g_m\})\in \Hgms$.  We denote by $\SG$ the systems of linear equations of type (\ref{TriaEq}) associated to $\G$, and by $\VG$ the subspace of $\C^{N_1}$ consisting of solution of $\SG$.

In what follows, we will say that a family of coordinates  $\{z_i, i \in I\}$ is {\em dependent } with respect to $\SG$ if there exists a vector $(\lbd_i)_{i\in I} \in \R^{I}$ such that $\sum_{i\in I} \lbd_i z_i=0$, for all $(z_1,\dots,z_{N_1})\in \VG$. The family $\{z_i, \, i \in I\}$ is said to be {\em  independent} with respect to  $\SG$, if such a vector does not exist. Equivalently, the family of coordinates $\{z_i, \; i \in I\}$ is independent if the restriction of the projection $(z_1,\dots,z_{N_1}) \in \C^{N_1} \mapsto (z_i)_{i\in I} \in \C^{|I|}$ to $\VG$ is surjective. 


\begin{Remark}\hfill
\begin{itemize}
 \item Since the coefficients of the equations of $\SG$ are in $\{0,\pm 1\}$, if the family $\{z_i, \, i \in I\}$ is dependent, we can choose the vector $(\lbd_i)_{i\in I}$ such that $\lbd_i\in \Z$, for all $i\in I$.
 \item Let $e_i$ denote the edge of $\T$ which corresponds to $z_i$. A family $I$ is independent with respect to $\SG$ if and only if the family $\{e_i, \; i \in I\}$ is independent in $H_1(M,\Sig, \Z)$.
\end{itemize}
\end{Remark}

\begin{Definition}\label{def:Prim:Fam}
 The {\em primary family}  of indices of $(\G,\{\G_{i,\pm}\})$ is the unique ordered subset $(i_1,\dots,i_{2g+n-1})$ of $\{1,\dots,N_1\}$ that satisfies the following properties
\begin{itemize}
 \item[$\bullet$] $i_k<i_{k+1}$,
 \item[$\bullet$] $i_1,\dots,i_{2g+n-1}$ is an independent family of indices with respect to $\SG$,
 \item[$\bullet$] For all $k\in \{1,\dots, 2g+n-1\}$, if $i<i_k$ then the family $(i_1,\dots,i_{k-1},i)$ is dependent  with respect to $\SG$.
\end{itemize}
\end{Definition}

\begin{Remark}
 The primary family of indices can be found inductively  by the following algorithm: first we take $i_1=1,\dots,i_m=m$. Recall that $e_1=\g_1,\dots,e_m=\g_m$, and by assumption $(\g_1,\dots,\g_m)$ is independent in $H_1(M,\Sig,\Z)$, hence the family $\{i_1,\dots,i_m\}$ is independent with respect to $\SG$. Assume that we already have an independent family $(i_1,\dots,i_k)$, then $i_{k+1}$ is the smallest index such that $(i_1,\dots,i_k,i_{k+1})$ is independent.
\end{Remark}

Let us denote by $I$  the primary family of index of $(\G, \{\G_{i,\pm}\})$.
\begin{Definition}\label{def:aux:fam}
 An {\em auxiliary family} of $I$ is an ordered family of indices $J=(j_{m+1},\dots,j_{2g+n-1})$ which satisfies the following condition: for  any $k\in\{m+1,\dots,2g+n-1\}$, $e_{j_k}$ is the base of one of the triangles in $\T$ that contains $e_{i_k}$. In particular, we have $j_k<i_k$.
\end{Definition}

\begin{Remark}
 By definition, we have $j_k<i_k$, thus $(i_1,\dots,i_{k-1},j_k)$ is a dependent family. Since the family $(i_1,\dots,i_{k-1})$ is independent, it follows that $z_{j_k}$ is a linear function  of $(z_{i_1},\dots,z_{i_{k-1}})$ in $\VG$.
\end{Remark}
An auxiliary family of $I$ can be found as follows: for each $i_k \in I, \, k>m$, consider the corresponding edge $e_{i_k}$ of $\G$. We have two cases:
\begin{itemize}
 \item[$\bullet$] $e_{i_k}$ belongs to some tree $\G_\alpha$. In this case, let $\Delta_k$ be the triangle of $\T$ dual to the endpoint of $e_{i_k}$ which is closer to the root of $\G_\alpha$. Let $e_{i'_k}$ and $e_{i''_k}$ be the other sides of $\Delta_k$ where $i'_k>i''_k$. We have $i_k \neq \min\{i_k,i'_k,i''_k\}$ by the definition of compatible numbering. Thus $i''_k=\min\{i_k,i'_k,i''_k\}$, which implies that $e_{i''_k}$ is the base of $\Delta_k$, and we can choose $j_k$ to be $i''_k$.

\item[$\bullet$] $e_{i_k}$ does not belong to any of the trees $\G_\alpha$. Let $\Delta_k$ be one of the two triangles in $\T$ that contains $e_{i_k}$. Again let $e_{i'_k}$ and $e_{i''_k}$ be the other sides of $\Delta_k$ where $i'_k> i''_k$. By the definition of compatible numbering, we also have $i_k \neq \min\{i_k,i'_k,i''_k\}$, hence we can choose $j_k$ to be $i''_k$.
\end{itemize}

 \subsection{Proof of Theorem~\ref{thm:int:fin:EF}}\label{sec:prf:thm:int:fin:EF}
 \begin{proof}
 Fix an admissible family of graphs $(\G,\{\G_{i,\pm}\})$ together with a compatible numbering of the edges of $\G$. Let $I=(i_1,\dots,i_{2g+n-1})$ be the primary family of indices of $(\G,\{\G_{i,\pm}\})$, and $\allowbreak J=(j_{m+1},\dots,j_{2g+n-1})$ an auxiliary family of indices of $I$. Since $\card\{I\}=\dim_\C \VG$, there exists a linear isomorphism with rational coefficients
 $$
 \begin{array}{cccc}
 \mathrm{F}: & \C^{2g+n-1} & \ra &  \VG\\
             & (z_1,\dots,z_{2g+n-1}) & \mapsto  & (f_1(z_1,\dots,z_{2g+n-1)}),\dots,f_{N_1}(z_1,\dots,z_{2g+n-1}))\\
 \end{array}
 $$
 which satisfies $f_{i_k}=z_k, \, \forall i_k \in   I$. To simplify the notation, we write $f_k$ instead of $f_{j_k}$, for every $j_k\in J$.

 Let $\Dc$ be the domain of $\VG$ which is defined by the inequations \eqref{eq:real:part:pos}, \eqref{Ineq:hor:leng}, \eqref{Ineq:Tri:Aa}, \eqref{ineq:tree:cond:l}, \eqref{ineq:tree:cond:r}. By a slight abuse of notation, we will denote also by $\Dc$ the corresponding domain  in $\C^{2g+n-1}$ (via $\mathrm{F}$). For every $(z_1,\dots,z_{2g+n-1})\in \Dc$, let $\Aa((z_1,\dots,z_{2g+n-1}))$ denote the area of the surface constructed from $F((z_1,\dots,z_{2g+n-1}))$.

 Let us write $z_i=x_i+\imath y_i$, with $x_i,y_i\in\R, \, i=1,\dots,2g+n-1$, and $f_k=a_k+\imath b_k, \, a_k,b_k\in \R, \, k=m+1,\dots,2g+n-1$. Remark that the coefficients of $f_k$ are real, therefore $a_k$ is a real linear function of $(x_1,\dots,x_{k-1})$, and $b_k$ is a real linear function of $(y_1,\dots,y_{k-1})$.

 To prove the theorem, by Corollary~\ref{cor:good:chart:finite}, is is enough to show that
 \begin{equation}\label{ineq:int:fin:1}
 \int_{\Dc} e^{-(|z_1|^2/\eps_1^2+\dots+|z_m|^2/\eps_m^2)}e^{-\Aa}dx_1dy_1\dots dx_{2g+n-1}dy_{2g+n-1} < K\eps_1^2\dots\eps_m^2.
 \end{equation}
 To prove the inequality (\ref{ineq:int:fin:1}) we will make use of the conditions (\ref{Ineq:hor:leng}) and (\ref{Ineq:Tri:Aa}). We first observe that the inequation (\ref{Ineq:hor:leng}) implies that for any $(x_1+\imath y_1,\dots,x_{2g+n-1}+\imath y_{2g+n-1})\in \Dc$, we have
 \begin{equation}\label{ineq:hor:leng:b}
 0<x_k<a_k, \;  k=m+1,\dots,2g+n-1.
 \end{equation}
 Set
  $$
 \Dc^*= \{(\underline{z},\underline{x}), \,\ul{z}\in \C^m, \underline{x}\in (\R_{>0})^{2g+n-1-m}, \; (\underline{z},\underline{x}) \text{  satisfies  (\ref{ineq:hor:leng:b})} \}\subset \C^m\times\R_{>0}^{2g+n-1-m}.
 $$
 \noindent For any  $(\underline{z},\underline{x})\in \Dc^*$, set
 $$
 \mathcal{Y}(\underline{z},\underline{x})=\{\underline{y}=(y_{m+1},\dots,y_{2g+n-1}) \in \R^{2g+n-1-m}:  \; (\underline{z},\underline{x}+\imath\underline{y})\in \Dc\}.
 $$
 \noindent The inequality (\ref{ineq:int:fin:1}) now becomes
 \begin{equation}\label{ineq:int:fin:2}
 \mathcal{I}:=\int_{\Dc^*} e^{-(|z_1|^2/\eps_1^2+\dots+|z_m|^2/\eps_m^2)}\left(\left(\int_{\mathcal{Y}(\underline{z},\underline{x})}e^{-\Aa} dy_{m+1}\dots dy_{2g+n-1}\right)d\mu_{2g+n-1-m}\right)d\mu_{2m} < K\eps_1^2\dots \eps_m^2.
 \end{equation}
 where $d\mu_{2g+n-1-m}=dx_{m+1}\dots dx_{2g+n-1}$ and $d\mu_{2m}=dx_1dy_1\dots dx_mdy_m$ are the Lebesgue measures of $\R^{2g+n-1-m}$ and $\C^m$ respectively.
 
 For every $k\in \{m+1,\dots,2g+n-1\}$, let $\Delta_k$ denote the triangle in $\T$ that contains $e_{i_k}$ and $e_{j_k}$.
 \begin{Claim}
  If $k\neq k'$, then $\Delta_k\neq \Delta_{k'}$.
 \end{Claim}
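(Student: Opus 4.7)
The plan is to argue by contradiction using the minimality property that characterizes the primary family (Definition~\ref{def:Prim:Fam}). Suppose $\Delta_k = \Delta_{k'}$ with $m < k < k'$, and denote the three edges of this common triangle by $e_a, e_b, e_c$ with indices $a < b < c$. Since, by construction, $e_{j_k}$ is the base of $\Delta_k$, i.e.\ the edge of smallest index among its three sides, we must have $j_k = j_{k'} = a$. The distinguished indices $i_k$ and $i_{k'}$ therefore lie in $\{b, c\}$, and from $k < k'$ together with the monotonicity of the primary family we deduce $i_k = b$ and $i_{k'} = c$. The key algebraic input is the triangle relation of type (\ref{TriaEq}) carried by $\Delta_k$, namely $\pm z_a \pm z_b \pm z_c = 0$, which allows me to write $z_c$ as a $\pm 1$-combination of $z_a$ and $z_b$ as functions on $\VG$.

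I would then split into two cases depending on whether $a$ already belongs to $\{i_1, \dots, i_{k'-1}\}$. In the first (easy) case $a \in \{i_1, \dots, i_{k'-1}\}$, both $a$ and $b = i_k$ lie in $\{i_1, \dots, i_{k'-1}\}$, so the triangle relation expresses $z_{i_{k'}} = z_c$ as a linear combination of $\{z_{i_j}\}_{j < k'}$; this directly contradicts the independence of the primary family $(i_1, \dots, i_{k'})$. In the second case $a \notin \{i_1, \dots, i_{k'-1}\}$, I would establish that $(i_1, \dots, i_{k'-1}, a)$ is itself independent with respect to $\SG$, which contradicts the minimality of $i_{k'}$ since $a < i_{k'}$.

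The argument in the second case is a short linear-algebra check: a non-trivial relation $\mu_a z_a + \sum_j \mu_j z_{i_j} = 0$ in $\VG^*$ either has $\mu_a = 0$, in which case $(i_1, \dots, i_{k'-1})$ is already dependent (contradicting independence of the full primary family through index $k'$), or has $\mu_a \neq 0$, in which case $z_a$ lies in the span of $\{z_{i_j}\}_{j < k'}$; combined with $z_c = \pm z_a \pm z_b$ this exhibits $z_{i_{k'}}$ as a combination of $\{z_{i_j}\}_{j<k'}$, again contradicting the independence of $(i_1, \dots, i_{k'})$. I expect no serious analytic obstacle — the only mild subtlety is keeping careful track of whether certain indices coincide and using the precise notion of dependence with respect to $\SG$ recalled just before Definition~\ref{def:Prim:Fam}.
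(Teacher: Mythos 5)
Your argument is correct and is essentially the paper's own proof: both reduce to showing that $z_{i_{k'}}$ would be a linear function of $(z_{i_1},\dots,z_{i_{k'-1}})$ on $\VG$ via the triangle relation together with the fact that the common base $z_{j_k}=z_{j_{k'}}$ already depends linearly on earlier primary coordinates (your $\mu_a=0$ versus $\mu_a\neq 0$ dichotomy is exactly the content of the remark following Definition~\ref{def:aux:fam}). The only cosmetic difference is that the paper expresses $z_{j_k}$ in terms of $(z_{i_1},\dots,z_{i_{k-1}})$ using minimality at level $k$, whereas you work at level $k'$; both are valid.
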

 \begin{proof}
  We can assume that $k<k'$. If $\Delta_k=\Delta_{k'}$ then $e_{i_k},e_{j_k}, e_{i_{k'}}$ are contained in the same triangle, which implies that $z_{i_{k'}}$ is a linear function of $(z_{i_k},z_{j_k})$ in $\VG$. By assumption, $z_{j_k}$ is a linear function of $(z_{i_1},\dots,z_{i_{k-1}})$, thus $z_{i_{k'}}$ is a linear function of $(z_{i_1},\dots,z_{i_k})$ which is impossible since $(i_1,\dots,i_{k'})$ is an independent family.
 \end{proof}

 Let $\eta_k$ be the area of $\Delta_k$. The previous claim implies immediately that
 \begin{equation}\label{ineq:area}
  \Aa > \sum_{k=m+1}^{2g+n-1} \eta_k.
 \end{equation}
 As functions on $\C^{2g+n-1}$,  $\eta_k$ are given by
 $$
 \eta_k = \pm \frac{1}{2}\im(f_k\bar{z}_k)= \pm \frac{1}{2}(a_ky_k-x_kb_k).
 $$
 Since $b_k$ is a linear function of $(y_1,\dots,y_{k-1})$, we have
 $$
 \frac{\partial \eta_k}{\partial y_j}=\left\{ \begin{array}{lc}
                                                 \frac{\pm 1}{2}x_k\frac{\partial b_k}{\partial y_j} & \hbox{ if $ j< k$},\\
                         \frac{\pm 1}{2}a_k & \hbox{ if $j=k$},\\
                          0 & \hbox{ if $j>k$},\\
                          \end{array} \right.
 $$
 hence
 $$
 d\eta_{m+1}\dots d\eta_{2g+n-1}=\frac{1}{2^{2g+n-1-m}}\left| \det\left( \begin{array}{cccc} \pm a_{m+1} & 0 & \dots & 0 \\ * & \pm a_{m+2} & \dots & 0 \\ \dots & \dots & \dots & \dots \\ * & * & \dots & \pm a_{2g+n-1} \end{array}\right)\right| dy_{m+1}\dots dy_{2g+n-1}.
 $$
 Therefore, we have
 $$
 dy_{m+1}\dots dy_{2g+n-1}=\frac{2^{2g+n-1-m}}{a_{m+1}\dots a_{2g+n-1}} d\eta_{m+1}\dots d\eta_{2g+n-1}.
 $$
 It follows from (\ref{ineq:area}) that
 $$
 \mathcal{I}(\underline{z},\underline{x}):=\int_{\mathcal{Y}(\underline{z},\underline{x})}e^{-\Aa}dy_{m+1}\dots dy_{2g+n-1} < \frac{2^{2g+n-1-m}}{a_{m+1}\dots a_{2g+n-1}}\int_{\mathcal{Y}(\underline{z},\underline{x})}e^{-(\eta_{m+1}+\dots+\eta_{2g+n-1})}d\eta_{m+1}\dots d\eta_{2g+n-1}.
 $$
 The conditions (\ref{Ineq:Tri:Aa}) mean that the functions $\eta_k$ are positive on $\mathcal{Y}(\underline{z},\underline{x})$. Thus we have
 $$
 \mathcal{I}(\ul{z},\ul{x}) < \frac{2^{2g+n-1-m}}{a_{m+1}\dots a_{2g+n-1}}\int_0^{+\infty}e^{-\eta_{m+1}}d\eta_{m+1}\dots\int_0^{+\infty}e^{-\eta_{2g+n-1}} d\eta_{2g+n-1}=\frac{2^{2g+n-1-m}}{a_{m+1}\dots a_{2g+n-1}},
 $$
 and
 \begin{equation}\label{ineq:int:ET:1}
 \mathcal{I} < 2^{2g+n-1-m}\int_{\Dc^*}\frac{e^{-(|z_1|^2/\eps_1^2+\dots+|z_m|^2/\eps_m^2)}}{a_{m+1}\dots a_{2g+n-1}}dx_{2g+n-1}\dots dx_{m+1}d\mu_{2m}.
 \end{equation}
 \noindent From the definition of $\Dc^*$, the right hand side of (\ref{ineq:int:ET:1}) is equal to
$$
 2^{2g+n-1-m} \int_{\C^m} e^{-(|z_1|^2/\eps_1^2+\dots+|z_m|^2/\eps_m^2)} \left(\int_0^{a_{m+1}} \dots \int_0^{a_{2g+m-1}}\frac{1}{a_{m+1} \dots a_{2g+n-1}}dx_{2g+n-1}\dots dx_{m+1}\right)         d\mu_{2m}.
$$ 
Since $a_{m+1},\dots,a_{k}$ do not depend on $x_k$, we have
  $$
  \int_0^{a_{m+1}}  \dots \int_0^{a_{2g+m-1}} \frac{1}{a_{m+1}\dots a_{2g+n-1}} dx_{2g+n-1}  \dots dx_{m+1}=1.
 $$
Thus
\begin{equation}\label{inq:int:ET:2}
 \mathcal{I} < 2^{2g+n-1-m}\int_{\C^m}e^{-(|z_1|^2/\eps^2_1+\dots+|z_m|^2/\eps^2_m)}dx_1dy_1\dots dx_mdy_m = 2^{2g+n-1-m}\pi^m \eps_1^2\dots\eps_m^2.
\end{equation}
\noindent The inequality (\ref{ineq:int:fin:2}) is then proved. Since the number of admissible families of graphs is finite, Theorem~\ref{thm:int:fin:EF} follows.
\end{proof}


\section{Strata of quadratic differentials}
\subsection{Statement of the result}\label{sec:QD:state:result}
Let $X$ be a compact Riemann surface, and $\phi$ a meromorphic quadratic differential on $X$ with simple poles. The quadratic deferential $\phi$ defines on $X$ a flat metric structure whose transition maps are of the form $z\mapsto \pm z +c, c$ is constant. Since all the poles of $\phi$ are simple, the area of this metric structure is finite, and its singularities are isolated cone points corresponding to the zeros and poles of $\phi$. The cone angle at a zero of order $k$ is $(k+2)\pi$, where a zero of order $-1$ is a simple pole.

In the case $\phi=\omega^2$, where $\omega$ is a holomorphic one-form, the flat metrics defined by $\phi$ and $\omega$ are the same. We will be only interested in quadratic differentials which are  not the square of any holomorphic one-form. In this case, the corresponding flat surface is also called a {\em half-translation surface}.
We denote by $\Qg$, $\ul{d}=(d_1,\dots,d_n), d_i\in \{-1,0,1,2,\dots\}$,  the set of pairs $(X,\phi)$ where $\phi$ has exactly $n$ zeros, with orders given by $\ul{d}$, and $\phi$ is not the square of a holomorphic one-form. By convention, a zero of order $0$ is a marked regular point. Remark that the genus of $X$ is determined by the formula
$$
d_1+\dots+d_n=4g-4.
$$
It is well known that $\Qg$ is a complex algebraic orbifold of dimension $2g+n-2$, which is equipped with a natural Lebesgue volume form $\vol$.

Let us recall briefly a way to define the local chart on a neighborhood of $(X,\phi) \in \Qg$. 
There exists a canonical (ramified) double cover $\piup: \hat{X}\ra X$ such that the quadratic differential $\hat{\phi}=\piup^*\phi$ is the square of a holomorphic one-form $\hat{\omega}$ on $\hat{X}$. We have an involution $\tp: \hat{X} \ra \hat{X}$ satisfying $x'=\tp(x)$ if and only if $\piup(x')=\piup(x)$, and $\tp^*\hat{\omega}=-\hat{\omega}$.
Let $\Sig$ be  the set of zeros and poles of $\phi$ and $\hat{\Sig}=\piup^{-1}(\Sig)\subset \hat{X}$.
We have a natural decomposition
$$
H_1(\hat{X},\hat{\Sig},\C)=H_1(\hat{X},\hat{\Sig},\C)^- \oplus H_1(\hat{X},\hat{\Sig},\C)^+,
$$
where $H_1(\hat{M},\hat{\Sig},\C)^\pm$ are the eigenspaces of $\tp$ corresponding to the eigenvalues $\pm 1$ respectively.

For a pair $(X',\phi')$ close to $(X,\phi)$, we have a pair  $(\hat{X}',\hat{\omega}')$ close to $(\hat{X},\hat{\omega})$ together with an involution $\tp': \hat{X}' \ra \hat{X}'$ (in the same homotopy class as $\tp$), and a ramified double cover $\piup': \hat{X}' \ra X'$. 
Let $\Sig'$ be the set of zeros and poles of $\phi'$ and $\hat{\Sig}'={\piup'}^{-1}(\Sig')$.
We can identify $H_1(\hat{X}',\hat{\Sig}',\Z)^-$ with $H_1(\hat{M},\hat{\Sig},\Z)^-$. 
Choose a basis $\{c_1,\dots,c_d\}$ of $H_1(\hat{X},\hat{S}; \Z)^-$, then the period mapping $\hat{\Phi}: (X',\phi') \mapsto (\int_{c_1}\hat{\omega}',\dots,\int_{c_d}\hat{\omega}')$ is a local chart for $\Qg$, in which the volume form $\vol$ is identified with the Lebesgue measure of $\C^d$.


Let $\Qgi$ denote the subset of $\Qg$ consisting of surface of area one, then we have a volume form $\vol_1$ on $\Qgi$ defined by
$$
d\vol=d\Aa d\vol_1,
$$
where $\Aa$ is the area function on $\Qg$.

To simplify the notation, we denote by  $M$ and $\hat{M}$ the flat surfaces defined by the pairs $(X,\phi)$ and $(\hat{X},\hat{\omega})$ respectively. 
A geodesic segment $\g$ on $M$ with endpoints in $\Sig$ which does not intersect $\Sig$ in the interior will be called a {\em  saddle connection}.
The pre-image of $\g$ in $\hat{M}$ consists of two geodesic segments $\hat{\g}^1,\hat{\g}^2$ with endpoints in $\hat{\Sig}$.
We choose the orientation of $\hat{\g}^1$ and $\hat{\g}^2$ so that $\tp(\hat{\g}^1)=-\hat{\g}^2$ and $\tp(\hat{\g}^2)=-\hat{\g}^1$.
We will denote by $\hat{\g}$ the element of  $H_1(\hat{M},\hat{\Sig},\Z)$ represented by $\hat{\g}^1+\hat{\g}^2$.
By definition, $\hat{\g}$ belongs to $H_1(\hat{M},\hat{\Sig},\C)^-$. 

\begin{Definition}\label{def:ind:fam:2}
A family  $\{\g_1,\dots,\g_m\}$ of $m$ saddle connections on $M$ is called {\em independent} if $\{\hat{\g}_1,\dots,\hat{\g}_m\}$ is independent in $H_1(\hat{M},\hat{\Sig},\C)^-$.
\end{Definition}

\noindent By convention two saddle connections $\g,\g'$ in $M$ are said to be disjoint if $\inter(\g)\cap\inter(\g')=\vide$.
For all $\ul{\eps}=(\eps_1,\dots,\eps_m)\in (\R_{>0})^m$, where $m < \dim_\C\Qg = 2g+n-2$, 
let us denote by $\Qgie$ the set of $M\in \Qgi$ such that there exists an ordered independent family of disjoint saddle connections $\{\g_1,\dots,\g_m\}$ on $M$ 
satisfying $|\g_j|< \eps_j, \; j=1,\dots,m$.
Similar to the case of Abelian differentials, we will prove
\begin{Theorem}\label{thm:sm:SC:QD}
For any $m < \dim_\C\Qg$, there exists a constant $K=K(\ul{d},m)$ such that for every $\ul{\eps}\in (\R_{>0})^m$, we have
$$
\vol_1(\Qgie)< K\eps_1^2\dots\eps_m^2.
$$
\end{Theorem}

\subsection{Half-translation surface with marked saddle connections}\label{sec:QD:w:marked:sc}
Let us denote by $\Qgm$ the set of pairs $(M,\{\g_1,\dots,\g_m\})$, where
\begin{itemize}
\item[$\bullet$] $M$ is an element of $\Qg$,
\item[$\bullet$] $\{\g_1,\dots,\g_m\}$ be an independent family of disjoint saddle connections in $M$.
\end{itemize}
There exists a natural projection $F: \Qgm \ra \Qg$ consisting of forgetting the marked saddle connections. 
This projection is locally homeomorphic, hence we can pullback the Lebesgue  measure $d\vol$ of $\Qg$ to $\Qgm$. 
We define a function $\mathfrak{F}_{\ul{\eps}} :\Qgm \ra \R$  as follows
$$
\mathfrak{F}_{\ul{\eps}}: (M,\{\g_1,\dots,\g_m\}) \mapsto \exp(-\sum_{j=1}^m\frac{|\g_j|}{\eps_j^2}-\Aa(M)).
$$
Theorem~\ref{thm:sm:SC:QD} is a direct consequence of the following theorem by the same arguments as in the proof of Theorem~\ref{thm:sm:SC:vol}
\begin{Theorem}\label{thm:int:fin:EF:QD}
There exists a constant $K=K(\ul{d},m)$ such that
$$\int_{\Qgm} \mathfrak{F}_{\ul{\eps}} d\mu < K \eps_1^2\dots\eps_m^2.$$
\end{Theorem}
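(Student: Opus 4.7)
The plan is to mimic the proof of Theorem~\ref{thm:int:fin:EF}, but carried out on the canonical orienting double cover. Given $(X,\phi,\{\g_1,\dots,\g_m\})\in \Qgm$, lift to the double cover $\piup\colon \hat X\to X$ so that $\piup^*\phi = \hat{\omega}^2$, with involution $\tp$ satisfying $\tp^*\hat\omega=-\hat\omega$. Each $\g_i$ lifts to a pair $\hat{\g}_i^1,\hat{\g}_i^2$ exchanged (up to sign) by $\tp$; by hypothesis the classes $[\hat{\g}_i]=[\hat{\g}_i^1]+[\hat{\g}_i^2]$ are independent in $H_1(\hat X,\hat S;\R)^-$. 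The moduli space $\Qgm$ is then locally modeled on the $\tp$-anti-invariant part of $H^1(\hat X,\hat S;\C)$, a linear subspace of complex dimension $2g+n-2$, and the volume form $\mu$ is the associated Lebesgue measure.

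Next I would import, almost verbatim, the machinery of Sections~\ref{sect:Triang} and~\ref{sect:spec:triang}, but working with $\tp$-equivariant objects. After excluding the measure-zero locus where the vertical direction is bad, construct on $\hat X$ a $\tp$-equivariant special triangulation $\hat\T$ whose $1$-skeleton contains all the $\hat{\g}_i^\veps$: pick a vertical direction on $\hat X$ compatible with $\tp$ (so that $\tp^*\xi = \xi$ up to sign, which is forced by $\tp^*\hat\omega=-\hat\omega$), then for each lifted marked connection truncate the vertical separatrices exactly as in Section~\ref{sect:spec:triang}; both the minimality-over-$\theta$ density argument and the convexity lemma for the embedded triangles go through, and $\tp$-equivariance is built in at each step. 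The dual graph $\hat\G$ carries the involution $\tp$, and a compatible numbering can be chosen so that edges paired by $\tp$ carry indices of the same parity class; equivalently we number orbit representatives. The linear relations coming from $\hat\T^{(2)}$ (equations of type~(\ref{TriaEq})) restricted to the $(-1)$-eigenspace cut out a subspace $\VT^-\subset \C^{\hat N_1}$ of complex dimension $2g+n-2$, and Lemma~\ref{lm:triang:loc:ch} and Proposition~\ref{prop:triang:loc:ch} produce a local chart $\Psi\colon \Dc\to \Qgm$ for a domain $\Dc\subset \VT^-$ defined by the evident analogs of the inequations (\ref{Ineq:hor:leng}), (\ref{Ineq:Tri:Aa}), (\ref{ineq:tree:cond:l}), (\ref{ineq:tree:cond:r}).

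With this chart in hand, the integration is a carbon copy of Section~\ref{sect:prf:int:fin}. Choose a primary family $I=(i_1,\dots,i_{2g+n-2})$ of indices in $\VT^-$ with $i_1,\dots,i_m$ corresponding to the (classes of the) marked connections $\hat{\g}_1,\dots,\hat{\g}_m$, and an auxiliary family $J=(j_{m+1},\dots,j_{2g+n-2})$ so that $e_{j_k}$ is the base of a triangle of $\hat\T$ containing $e_{i_k}$. The claim that the triangles $\Delta_{m+1},\dots,\Delta_{2g+n-2}$ are pairwise distinct uses only the independence of $I$ in $\VT^-$, and the inequality $\Aa(X)=\tfrac12\Aa(\hat X) \geq \tfrac12\sum_k \eta_k$ (with the factor $\tfrac12$ harmlessly absorbed into the constant) still holds because each $\Delta_k$ is embedded on $\hat X$. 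The change of variables $y_k\rightsquigarrow \eta_k$ has Jacobian $\prod a_k$ with $a_k$ depending only on $(x_1,\dots,x_{k-1})$, so after integrating out the $\eta_k$ and then the $x_k$ (the nested $\int_0^{a_k}\!dx_k/a_k=1$ trick) we are left with the Gaussian integral
\begin{equation*}
\int_{\C^m} \exp\!\Bigl(-\sum_{j=1}^m |z_j|^2/\eps_j^2\Bigr)\,d\lbd_{2m} = \pi^m\,\eps_1^2\cdots\eps_m^2,
\end{equation*}
yielding the desired bound (summed over finitely many $(\hat\G,\{\hat\G_{i,\pm}\})$).

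The main obstacle is the $\tp$-equivariant special triangulation: one must check that the greedy triangle construction of Section~\ref{sect:spec:triang}, applied to all lifted marked connections $\hat{\g}_i^{\veps}$ simultaneously, produces a family of embedded triangles with disjoint interiors that is globally preserved by $\tp$ (so that the resulting triangulation descends compatibly and the dual graph carries the involution). The subtlety is that a single triangle $\Delta$ on $\hat X$ may meet its $\tp$-image (or even be fixed by $\tp$ when a fixed point of $\tp$ lies on $\g_i$), and the inductive choice of the shortest upper separatrix must be made coherently with the involution; this is handled by performing the construction for one element of each $\tp$-orbit of marked connections and then applying $\tp$, together with a small genericity argument (excluding a codimension-one subset where a vertical separatrix simultaneously hits a fixed point of $\tp$ and a marked connection) to avoid coincidences.
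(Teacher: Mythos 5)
Your proposal follows essentially the same route as the paper: lift to the orienting double cover, take the special triangulation of $(\hat X,\hat\omega)$ with respect to all $2m$ lifted connections (which is automatically $\tp$-invariant by uniqueness of the special triangulation, since $\tp$ swaps upward and downward separatrices and preserves the marked family — so your equivariance worries and the orbit-representative fix, while harmless, are not needed), impose the symmetry equations $Z(e')=Z(e)$ for $\tp(e)=-e'$ to cut out the $(2g+n-2)$-dimensional space $\hat{\V}$, and rerun the integration of Section~\ref{sect:prf:int:fin}. The one step you leave implicit is the paper's Lemma~\ref{lm:indep:fam:QD}: to start the primary family with the $m$ marked indices one must verify that $z_1,\dots,z_m$ are independent as linear functionals on $\hat{\V}$, and this follows from the assumed independence of the $[\hat\g_i]$ in $H_1(\hat X,\hat S;\R)^-$ only after symmetrizing a putative relation via the equations $Z(e')=Z(e)$, so that it pairs trivially with all of $H^1(\hat X,\hat S;\C)^-$.
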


\subsection{Symmetric special triangulation}
Consider an element $(M, \{\g_1,\dots,\g_m\})$ of $\Qgm$. Given $\theta\in \S^1$, the flow in direction $\theta$ is not well defined  on $M$, but the notion of ``{\em parallel}'' is. 
Let $\Qgms$ be the set of $(M,\{\g_1,\dots,\g_m\})\in \Qgm$ which satisfy the following condition: every vertical separatrix intersects the set $\cup_{1\leq i \leq m}\inter(\g_i)$ before reaching a singular point. The following lemma follows from the same argument as Lemma~\ref{lm:dense:subset}

\begin{Lemma}\label{lm:dense:subset:QD}
$\Qgms$ is an open dense subset of full measure of $\Qgm$.
\end{Lemma}

Suppose that $(M,\{\g_1,\dots,\g_m\})$ belongs to $\Qgms$, then $(\hat{M},\{\hat{\g}^1_1,\hat{\g}^2_1,\dots,\hat{\g}^1_m,\hat{\g}^2_m\})$ belongs to $\widetilde{\mathcal{H}}^{(2m)}(\ul{\hat{k}})^*$ (for some integral vector $\ul{\hat{k}}$). Let $\hT$ be the special triangulation of $\hat{M}$ with respect to the family $\{\hat{\g}^s_i\}$. 
Since we have $\tp^*\hat{\omega}=-\hat{\omega}$, it follows that the involution $\tp$ sends a separatrix indirection $(0,1)$ to a separatrix in direction $(0,-1)$ and vice versa.
By definition, the family of saddle connections $\{\hat{\g}^s_i\}$ is invariant under $\tp$. Therefore, $\tp$ induces an involution on  set of vertical separatrices joining every point in  $\hat{\Sig}$ to some points in  $\cup_{1 \leq i \leq m}(\inter(\hat{\g}^1_i)\cup \inter(\hat{\g}^2_i))$. As a consequence, the triangulation $\hT$ is invariant under $\tp$. 
It follows in particular that $\tp$ induces an involution on the set $\hT^{(1)}$.

Let $\hat{N}_1$ and $\hat{N}_2$ be the number of edges and of triangles in $\hT$ respectively. We choose the orientations of the edges of $\hT$ such that $\hat{\omega}(\tp(e))=-\hat{\omega}(e)$ for all $e \in \hT^{(1)}$.  
Consider a vector $Z\in \C^{\hat{N}_1}$ as a function from $\hT^{(1)}$ to $\C$, we have a system $\mathbf{S}_{\hT}$ of $\hat{N}_2$ linear  equations of the form (\ref{TriaEq}), each of which corresponding to a triangle in $\hT^{(2)}$. We add to this system the equations
\begin{equation}\label{SymEq}
Z(e')=Z(e)
\end{equation}
where $e,e'\in \hT^{(1)}$ such that $\tp(e)=-e'$. 

Let $\hat{\mathbf{S}}_{\hT}$ denote the resulting system, and $\hat{\mathrm{V}}_{\hT}\subset \C^{\hat{N}_1}$ denote the space of solutions of $\hat{\mathbf{S}}_{\hT}$. 
Clearly, the integrals of $\hat{\omega}$ along the edges of $\hT$ gives us a vector in $\hat{\mathrm{V}}_{\hT}$. 
Given $Z'\in \hat{\mathrm{V}}_{\hT}$ close to $Z$, we can construct a surface $\hat{M}'$, together with $2m$ marked saddle connections ${\hat{\g'}}^s_i$ and an involution $\tp':\hat{M}'\ra \hat{M}'$ satisfying $\tp'({\hat{\g'}}^1_i)=-{\hat{\g'}}^2_i$. 
Thus we have a continuous mapping $\Psi_{\hT}$ defined in a neighborhood of $Z$ in $\hat{\mathrm{V}}_{\hT}$ to $\Qgm$. It is not difficult to check that $\dim_\C\hat{\mathrm{V}}_{\hT}=\dim_\C\Qgm=2g+n-2$, and in local charts of $\Qgm$ defined by  period mappings, $\Psi_{\hT}$ is a linear isomorphism between complex vector spaces.

Let $\Dc$ be the domain in $\hat{\mathrm{V}}_{\hT}$ which is defined by the inequations (\ref{eq:real:part:pos}), (\ref{Ineq:hor:leng}),(\ref{Ineq:Tri:Aa}), (\ref{ineq:tree:cond:l}), (\ref{ineq:tree:cond:r}), and $\Dc_0$ be the component of $\Dc$ that contains $Z$. 
\begin{Proposition}\label{prop:psi:embed:QD}
The map $\Psi_{\hT}$ is well defined  and injective in $\Dc_0$, it realizes a homeomorphism between $\Dc_0$ and  its image in $\Qgm$.
\end{Proposition}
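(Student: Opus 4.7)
The plan is to adapt the proof of Proposition~\ref{prop:psi:embed}, with the extra bookkeeping needed for the $\tp$-equivariance encoded by the relations (\ref{SymEq}). I will work throughout in the connected component $\Dc_0 \subset \hat{\V}_{\hT}$ containing the period vector $Z$ of $(\hat X, \hat\omega)$.

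First I would verify that $\Psi_{\hT}$ is well defined on $\Dc_0$. For any $Z' \in \Dc_0$, gluing triangles along the graph $\G$ dual to $\hT$ produces a translation surface $(\hat X', \hat\omega')$ exactly as in Section~\ref{sect:Triang}. The symmetry equations (\ref{SymEq}), built into $\hat{\V}_{\hT}$, guarantee that for every pair $e,e' \in \hT^{(1)}$ with $\tp(e)=-e'$ the two corresponding edges of $\hat X'$ have equal lengths and opposite orientations. Together with the $\tp$-invariance of $\hT^{(2)}$, this is enough for the combinatorial involution of $\hT$ to extend to a globally defined isometric involution $\tp'$ on $\hat X'$ satisfying $(\tp')^*\hat\omega' = -\hat\omega'$. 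The quadratic differential $(\hat\omega')^2$ then descends to a half-translation surface $(X',\phi')$ on the quotient, and the images $\g'_i$ of the distinguished edges provide an ordered family of saddle connections. Since the relevant topological invariants (the stratum $\ul d$, the covering data $\ul{\hat k}$, the conjugacy class of $\tp'$, and the linear independence of $[\hat\g'^1_i + \hat\g'^2_i]$ in $H_1(\hat X',\hat S';\R)^-$) are discrete and vary continuously with $Z'$, they are constant on $\Dc_0$ and equal their correct values at $Z$; hence $\Psi_{\hT}(Z') \in \Qgm$.

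Next I would observe that the inequations (\ref{Ineq:hor:leng})–(\ref{ineq:tree:cond:r}) are precisely the conditions that make the triangulation $\hT'$ of $\hat X'$ produced by the gluing the special triangulation in the sense of Section~\ref{sect:spec:triang}, and the $\tp$-invariance of $\hT$ makes $\hT'$ symmetric under $\tp'$. Injectivity then follows as in Proposition~\ref{prop:psi:embed}: if $\Psi_{\hT}(Z_1) = \Psi_{\hT}(Z_2)$ with $Z_1,Z_2 \in \Dc_0$, the two resulting triples in $\Qgm$ coincide; lifting to the canonical double cover, both carry a symmetric special triangulation identified with $\hT$, and uniqueness of the special triangulation (Claim~\ref{clm:sp:triang} applied to the cover, together with $\tp$-equivariance) forces $Z_1=Z_2$. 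Continuity of $\Psi_{\hT}$ is immediate from the construction; composing with the period chart $\hat\Phi$ realizes $\Psi_{\hT}$ as the restriction to $\hat{\V}_{\hT}$ of a $\C$-linear map to $\C^d$, so injectivity upgrades $\Psi_{\hT}$ to a homeomorphism onto its image.

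The main obstacle I anticipate is the first step: checking that the edge-wise symmetry prescribed by (\ref{SymEq}) truly assembles into a \emph{global} geometric involution $\tp'$ on $\hat X'$, rather than a collection of compatible local symmetries. One needs to verify that whenever two triangles of $\hT^{(2)}$ are glued along $e \in \hT^{(1)}$, their $\tp$-images are glued along $\tp(e)$ in a way that matches the new identifications, both across every edge and around every vertex. The $\tp$-invariance of $\hT^{(2)}$ combined with the relation $\hat\omega(\tp(e)) = -\hat\omega(e)$ at the reference point $Z$, propagated through $\Dc_0$ by continuity of the gluing data, should settle this compatibility.
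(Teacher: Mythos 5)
Your proposal is correct and follows essentially the same route as the paper, which itself proves this proposition by declaring it ``follows from the same argument as Proposition~\ref{prop:psi:embed}'': constancy of topological invariants on the connected component $\Dc_0$, the inequations forcing the glued triangulation to be the (unique) symmetric special triangulation so that $\Psi_{\hT}(Z_1)=\Psi_{\hT}(Z_2)$ would yield two special triangulations of the same element, and the period chart realizing $\Psi_{\hT}$ as a restriction of a linear isomorphism to get the homeomorphism. Your extra care about assembling the edge-wise relations (\ref{SymEq}) into a global involution $\tp'$ is a point the paper glosses over, and your resolution of it is sound.
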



Let us now define
\begin{Definition}\label{def:sym:adm:graph}
Let $\G$ be a trivalent graph with $\hat{N}_2$ vertices and $\hat{N}_1$ edges. Let $\G^s_{i,\pm}, \, i\in \{1,\dots,m\}, \, s\in \{1,2\}$, be a family of disjoint subgraphs of $\G$ which are trees. We will say that $(\G,\{\G^s_{i,\pm}\})$ is a {\em symmetric admissible family of graphs} if $(\G,\{\G^s_{i,\pm}\})$ is an admissible family of graphs (see Definition~\ref{def:adm:fam:graphs}), and there exists an involution $\tp$ of $\G$ which satisfies $\tp(\G^1_{i,\veps})= \G^{2}_{i,-\veps}$. 
\end{Definition}

Let $\G$ be the dual graph of $\hT$. For each $\hat{\g}^s_i$, let $\G^s_{i,+}$ (resp. $\G^s_{i,-}$) be the tree in $\G$ which is dual to the union of triangles in $\hT^{(2)}$ that cover the vertical separatrices reaching $\inter(\hat{\g}^s_{i})$ from the upper side (resp. lower side). By construction, $(\G,\{\G^s_{i,\pm}\})$ is a symmetric admissible family of graphs. 
Since the set of symmetric admissible families of graphs is finite, we get

\begin{Proposition}~\label{prop:part:fin:QD}
There exists a partition of $\Qgms$ into finitely many subsets, each of which corresponds to a component of the subset  determined by the inequations (\ref{eq:real:part:pos}), (\ref{Ineq:hor:leng}),(\ref{Ineq:Tri:Aa}), (\ref{ineq:tree:cond:l}), (\ref{ineq:tree:cond:r}) in a vector subspace $\hat{\mathrm{V}}$ of $\C^{\hat{N}_1}$ of dimension $2g+n-2$. The space $\hat{\mathrm{V}}$ itself is determined by a symmetric admissible family of graphs.
\end{Proposition}


\subsection{Proof of Theorem~\ref{thm:int:fin:EF:QD}}
\begin{proof}
Let $(\G,\{\G^s_{i,\pm}\})$ be a symmetric admissible family of graphs. Choose a compatible numbering of the edges of $\G$ such that $e_i$ is the root of $\G^1_{i,+}, \, i=1,\dots,m$. Let $\mathbf{S}_\G$ denote the linear system associated to $(\G,\{\G^s_{i,\pm}\})$, and $\hat{\bf S}_\G$ the system obtained by adding to ${\bf S}_\G$ the equations of type (\ref{SymEq}). Let $\hat{\V}_\G$ be the space of solutions of $\hat{\mathbf{S}}_\G$ in $\C^{\hat{N}_1}$. 
Let $\Dc$ be the subset of $\hat{\V}_\G$ determined by the inequations (\ref{eq:real:part:pos}), (\ref{Ineq:hor:leng}), (\ref{Ineq:Tri:Aa}), (\ref{ineq:tree:cond:l}), (\ref{ineq:tree:cond:r}). 
By construction, there exists a map $\Psi :\Dc \rightarrow \Qgm$ which is homeomorphic onto its image.  
The pullback of the function $\mathfrak{F}_{\ul{\eps}}$  by $\Psi$ is given by
$$
\Psi^* \mathfrak{F}_{\ul{\eps}}(Z)=\exp(-(\frac{|z_1|^2}{\eps_1^2}+\dots+\frac{|z_m|^2}{\eps_m^2})-\Aa(\Psi(Z))).
$$
By Proposition~\ref{prop:part:fin:QD}, it suffices to show that
\begin{equation*}
\int_{\Dc}e^{-(|z_1|^2/\eps_1^2+\dots+|z_m|^2/\eps_m^2)-\Aa} d\mu < K\eps_1^2\dots\eps_m^2.
\end{equation*}
where $d\mu$ is the Lebesgue measure of $\hat{\V}_\G$.  
The remainder of the proof follows the same lines as the proof of Theorem~\ref{thm:int:fin:EF}. 
\end{proof}


\begin{thebibliography}{AAAAAA}
\providecommand{\bysame}{\leavevmode  ---\  } \providecommand{\og}{``} \providecommand{\fg}{''}

\bibitem{AvEskMo12} {\scshape A.~Avila, A.~Eskin, \normalfont{ and}  M.~M\"{o}ller} -- {\og Symplectic and Isometric $\SL(2,\R)$-invariant subbundle of the Hodge bundle \fg}, to appear in {\em J. Reine  Angew. Math.}, arXiv:1209.2854 (2012).


\bibitem{AviMatYoc13} {\scshape A.~Aviva, C.~Matheus, {\normalfont and } J.-C.~Yoccoz } -- {\og $SL(2,\R)$-invariant probability measures on the moduli space of translation surfaces are regular \fg},  {\em Geom. and Funct. Analysis 23} (2013), no.6, pp. 1705--1729. 


\bibitem{Bainbridge-GT} {\scshape M.~Bainbridge} -- {\og Euler characteristics of Teichm\"uller curves in genus two \fg}, {\em Geom. Topol.} {\bf 11}, pp.~1887-2073 (2007).

\bibitem{Bainbridge-GAFA} {\scshape M.~Bainbridge} -- {\og Billiards in L-shaped tables with barriers \fg}, {\em Geom. Funct. Anal.} {\bf 20} no. 2, pp.~299-356 (2010).


\bibitem{Cal04} {\scshape K.~Calta} --{\og Veech surfaces and complete periodicity in genus two\fg}, {\em J. Amer. Math. Soc.}, {\bf 17}, No. 4, pp.~871-908 (2004).

\bibitem{EskKonZor11} {\scshape A.~Eskin, M.~Kontsevich, {\normalfont and } A.~Zorich} -- {\og Sum of Lyapunov exponents of the Hodge bundle with respect to the Teichm\"uller geodesic flow \fg},  {\em Publ. Math. Inst. Hautes \'Etudes Sci.} {\bf 120} 207--333 (2014). 


\bibitem{EsMir12}{\scshape A.~Eskin {\normalfont and } M.~Mirzakhani} -- {\og Invariant and sationary measures for the $\SL(2,\R)$ action on moduli space\fg}, {\em arxiv:1302.3320} (2013).

\bibitem{EsMirMoh15} {\scshape A.~Eskin, M.~Mirzakhani, {\normalfont and } A.~Mohammadi} -- {\og  Isolation, equidistribution, and orbit closures for the $\SL(2,\R)$ action on moduli space\fg}, {\em Annals of Math.} {\bf 182} (2015), no.2, pp.~673--721.




\bibitem{EskMas01} {\scshape A.~Eskin {\normalfont and } H.~Masur} -- {\og Asymptotic formulas on flat surfaces \fg}, {\em Erg. Theo. $\&$ Dyn. Syst.} {\bf 21},  pp.~443-478 (2001).

\bibitem{EskMasZ} {\scshape A. Eskin, H. Masur, {\normalfont and } A. Zorich}-- {\og Moduli spaces of abelian differentials: The principal boundary, counting problems, and the Siegel-Veech constants\fg}, {\em Publ. Math. Inst. Hautes \'Etudes Sci.} {\bf 97},  pp.~61-179 (2003).




\bibitem{Fil:algebraic} {\scshape S.~Filip}-- {\og Splitting mixed Hodge structures over affine invariant manifolds \fg}, {\em Annals of Math.} {\bf 183} (2016), no.2, pp.~681-713. 

\bibitem{KerMasSmi86}{\scshape S.~Kerckhoff, H.~Masur, {\normalfont and } J.~Smillie} -- {\og Ergodicity of billiards flows and quadratic differentials \fg}, {\em Annals of Math.} {\bf 124} (1986), pp.~293--311.

\bibitem{Kon}{\scshape M. Kontsevich}-- {\og Lyapunov exponents and Hodge theory\fg} in {\bf The mathematical beauty of physics} (Saclay, 1996), 
(in Honor of C. Itzykson) 318-332, {\em Adv. Ser. Math. Phys., 24. World Sci. Publishing, River Edge}, NJ (1997).





\bibitem{Lanneau:Manh:cp} {\scshape E.~Lanneau \normalfont{and} D.-M.~Nguyen}--{\og Complete periodicity of Prym eigenforms \fg}, {\em Annales Scientifiques de l'E.N.S}, {\bf 49:1} (2016), pp.~87--130.


\bibitem{Lan-Ng-Wr15} {\scshape E.~Lanneau, D.-M.~Nguyen, \normalfont{and}  A.~Wright}-- {\og Finiteness of Teichm\"uller curves in non-arithmetic rank one orbit closures \fg}, to appear in {\em Amer. J. of Math.}, {\em arXiv:1504.03742} (2015).






\bibitem{MasSmi91} {\scshape H.~Masur {\normalfont and } J.~Smillie }--{\og Hausdorff dimension of sets of nonergodic foliations \fg}, {\em Annals of Math. (2)} {\bf 134 }, no.3, pp.~455-543  (1991).

\bibitem{MasTab}{\scshape H.~Masur {\normalfont and } S.~Tabachnikov} -- {\og Rational billards and flat structures \fg}, {\em Handbook of dynamical systems}, {\bf 1A} North-Holland, Amsterdam, pp.~1015-1089  (2002).

\bibitem{MasZor} {\scshape H.~Masur, {\normalfont and} A.~Zorich} --{\og Multiple saddle connections on flat surfaces and the boundary principle of the moduli space of quadratic differentials\fg}, {\em  Geom. Funct. Anal.}, {\bf 18}, no. 3, pp.~919-987  (2008).

\bibitem{McM06} {\scshape C.T.~McMullen} -- {\og Prym varieties and {T}eichm\"uller curves\fg}, {\em Duke Math. J.}, {\bf 133}, No.3, pp.~569--590  (2006).

\bibitem{McM07} {\scshape C.T.~McMullen} -- {\og Dynamics of $\textrm{SL}_2(\R)$ over   moduli space in genus two\fg}, \emph{Ann.\ of Math. (2)}  {\bf 165}, no.2, pp.~397--456  (2007).

\bibitem{MinWei02}{\scshape Y.~Minsky {\normalfont and } B.~Weiss} -- {\og Non-divergence of horocyclic flows on moduli spaces\fg}, {\em J. Reine Angew. Math.} {\bf 552},  pp.~131-177 (2002).

\bibitem{Mir-Wri:boundary} {\scshape M.~Mirzakhani {\normalfont and} A.~Wright} -- {\og The boundary of an affine invariant submanifold \fg}, {\em arXiv:1508.01446}, to appear in {\em Invent. Math.} (2015).

\bibitem{Mumford76} {\scshape D.~Mumford}-- {\og Algebraic Geometry I: Complex projective varieties \fg}, {\em Grundlehren der mathematischen Wissenschaften 221}, Springer-Verglas (1976).

\bibitem{Ng10} {\scshape D.-M.~Nguyen} -- {\og Triangulation and volume form on moduli spaces of flat surfaces \fg}, {\em Geom. Funct. Anal.} {\bf 20}, no.1, pp.~192-228  (2010).

\bibitem{Ng12} {\scshape D.-M.~Nguyen} -- {\og Energy function on moduli spaces of flat surfaces with erasing forest \fg}, {\em Math. Ann.} {\bf 353}, pp.~997-1036  (2012).


\bibitem{Smi00} {\scshape J.~Smillie} -- {\og The dynamics of billiards flows in rational polygons \fg}, {\em Dynamical Systems, Ergodic Theory and Applications}, {\bf Encyclopaedia of Mathematical Sciences 100}, Mathematical Physics I, 360-382, {\em Springer Berlin} (2000).


\bibitem{Vee90} {\scshape W.A.~Veech} --{\og Moduli spaces of quadratic differentials \fg}, {\em Journal d'Analyse Math.} {\bf 55},  pp.~117-171 (1990).

\bibitem{Vee98} {\scshape W.A.~Veech} -- {\og Siegel measures \fg}, {\em Ann. \ of Math.} {\bf 148}, pp.~895-944 (1998).

\bibitem{Vor03}{\scshape Ya.~Vorobets} -- {\og Periodic geodesics on translation surfaces \fg}, {\em arXiv:math/0307249} (2003).

\bibitem{Vor05}{\scshape Ya.~Vorobets} -- {\og Periodic geodesics on generic translation surfaces\fg}, {\em `` Algebraic and Topological Dynamics''}, 205-258, {\bf Contemp. Math. 385}, {\em Amer. Math. Soc., Providence, RI} (2005).



\bibitem{Wr13} {\scshape A.~Wright} --  {\og Cylinder deformations in orbit closures of translation surfaces \fg}, {\em Geometry $\&$ Topology} {\bf 19} (2015), pp.~413--438. 


\bibitem{Wright_survey1} {\scshape A.~Wright} -- {\og Translation surfaces and their orbit closures: an introduction for a broad audience\fg}, {\em  EMS Surv. Math. Sci.} (2015).

\bibitem{Zor06} {\scshape A.~Zorich} -- {\og Flat surfaces \fg}  in collection {\em ``Frontiers in Number Theory, Physics and Geometry", Vol. 1: On random matrices, zeta functions and dynamical systems}, \'Ecole de Physique des Houches, France, March 9-21 2003, Springer-Verlag (2006).





\end{thebibliography}
\end{document}